\newcommand{\x}{\mathbf{x}}
\newcommand{\y}{\mathbf{y}}
\newcommand{\z}{\mathbf{z}}
\newcommand{\uu}{\mathbf{u}}
\newcommand{\vv}{\mathbf{v}}
\newcommand{\ww}{\mathbf{w}}
\newcommand{\la}{\boldsymbol{\lambda}}
\newcommand{\m}{\boldsymbol{\mu}}
\newcommand{\0}{\mathbf{0}}
\newcommand{\A}{\mathbf{a}}
\newcommand{\bb}{\mathbf{b}}
\newcommand{\dd}{\mathbf{d}}
\newcommand{\pp}{\mathbf{p}}
\def\eqref#1{equation~\ref{#1}}
\def\1{\bm{1}}
\def\vv{{\bm{v}}}
\DeclareMathAlphabet{\mathsfit}{\encodingdefault}{\sfdefault}{m}{sl}
\SetMathAlphabet{\mathsfit}{bold}{\encodingdefault}{\sfdefault}{bx}{n}
\newtheorem{theorem}{Theorem}
\newtheorem{lemma}[theorem]{Lemma}
\newtheorem{corollary}[theorem]{Corollary}
\theoremstyle{definition}
\newtheorem{definition}[theorem]{Definition}
\newtheorem{assumption}[theorem]{Assumption}
\theoremstyle{remark}
\newtheorem{remark}[theorem]{Remark}
\title{\vspace{-8mm}\textbf{Beyond Convexity: Proximal-Perturbed Lagrangian Methods for Efficient Functional Constrained Optimization}}
\date{}
\author{Sang Bin Moon\footnotemark[1] \footnotemark[2] \and Jong Gwang Kim\footnotemark[1] \footnotemark[3] \and Ashish Chandra\footnotemark[4] \and Christopher Brinton\footnotemark[2] \and Abolfazl Hashemi\footnotemark[2]}
\begin{document}
\maketitle
\footnotetext[1]{These authors contributed equally to this work.}
\footnotetext[2]{School of Electrical and Computer Engineering, Purdue University, West Lafayette, IN.}
\footnotetext[3]{Coles College of Business, Kennesaw State University, Kennesaw, GA.}
\footnotetext[4]{Department of Management and Quantitative Methods, College of Business, Illinois State University, Normal, IL.}
%%%%%%%%%%%%%%%%%%%%%%%%%%%%%%%%%%%%%%%%%%%%%%%%%%%%%%%%%%%%%
%%%%%%%%%%%%%%%%%%%%%%%%%%%%%%%%%%%%%%%%%%%%%%%%%%%%%%%%%%%%%
\begin{abstract}
Non-convex functional constrained optimization problems have gained substantial attention in machine learning and data science, addressing broad requirements that typically go beyond the often performance-centric objectives. An influential class of algorithms for functional constrained problems is the class of primal-dual methods which has been extensively analyzed for convex problems. Nonetheless, the investigation of their efficacy for non-convex problems is under-explored. This paper develops a primal-dual algorithmic framework for solving such non-convex problems. This framework is built upon a novel form of the Lagrangian function, termed the {\em Proximal-Perturbed Augmented Lagrangian}, which enables the development of simple first-order algorithms that converge to a stationary solution under mild conditions. Notably, we study this framework under both non-smoothness and smoothness of the constraint function and provide three key contributions: (i) a single-loop algorithm that does not require the continuous adjustment of the penalty parameter to infinity; (ii) a non-asymptotic iteration complexity of $\widetilde{\mathcal{O}}(1/\epsilon^2)$; and (iii) extensive experimental results demonstrating the effectiveness of the proposed framework in terms of computational cost and performance, outperforming related approaches that use regularization (penalization) techniques and/or standard Lagrangian relaxation across diverse non-convex problems.
\end{abstract}
%%%%%%%%%%%%%%%%%%%%%%%%%%%%%%%%%%%%%%%%%%%%%%%%%%%%%%%%%%%%%
%%%%%%%%%%%%%%%%%%%%%%%%%%%%%%%%%%%%%%%%%%%%%%%%%%%%%%%%%%%%%
\section{Introduction} \label{sec:intro}

We consider the following non-convex optimization problem with functional constraints: 
\begin{equation} \label{eq:op}
\begin{aligned}
\underset{\x \in \mathbb{R}^n}{\mathrm{min}}  \ \ f(\x) + r(\x) \quad \mathrm{s.\:t.} \quad  g(\x) \leq \0,
\end{aligned}
\end{equation}
where $f: \mathbb{R}^n \rightarrow \mathbb{R}$ and $g: \mathbb{R}^n \rightarrow \mathbb{R}^m$ are continuous and possibly non-convex mappings; and $r: \mathbb{R}^n \rightarrow \mathbb{R} \cup \{ + \infty\}$ is a proper, closed, and convex (possibly non-smooth) function.

Problems of this form \eqref{eq:op} appear in a wide range of applications in machine learning, data science, and signal processing, e.g., wireless transmit/receive beamforming design \cite{scutari2016parallel2,shi2020penalty}, constrained classification/detection problems \cite{huang2019stable,rigollet2011neyman,zafar2019fairness}, and optimization for deep neural network \cite{bai2023efficient,jiang2023optimality}. Solving non-convex problems, even those without constraints, is generally challenging, as finding even an approximate global minimum is often computationally intractable \cite{nemirovskij1983problem}. The presence of functional constraints $g(\mathbf{x})$ in \eqref{eq:op} that can potentially be non-convex is critical for many of the applications mentioned above, yet it makes the problem even more challenging. A further complication arises since in many of these applications, problem \eqref{eq:op} tends to be large-scale, i.e., with large variable dimension $n$ \cite{boyd2011distributed}. Hence, developing first-order methods that can find stationary solutions with lower complexity bounds is highly desirable.

Augmented Lagrangian (AL)-based algorithms are a prevailing class of approaches for constrained optimization problems. The foundational AL method, introduced by \cite{hestenes1969multiplier} and \cite{powell1969method}, has been a powerful algorithmic framework built on by many contemporary algorithms. In particular, the Alternating Direction Method of Multipliers (ADMM) scheme has been widely employed for solving constrained optimization problems based on the AL framework; see \cite{bertsekas2014constrained,birgin2014practical} and recent works for constrained convex settings \cite{ouyang2015accelerated,lan2016iteration,xu2017accelerated,liu2019nonergodic,xu2021iteration,upadhyay2025optimization}. 

However, AL-based methods remain fairly limited for problems in the general form of \eqref{eq:op}. Key challenges arise from the non-convexity of the objective and constraint functions, which can lead to complicated updates with no closed-form or compute-efficient updates and
%. Specifically, two major challenges arise: (i) difficulty in controlling the multipliers due to the absence of strong duality and, as a result, (ii) 
require carefully updating the penalty parameters to ensure the solution remains near feasible. Consequently, existing analyses of AL-based methods, with the best-known guarantees of $\mathcal{O}(1/\epsilon^3)$ for a given $\epsilon > 0$, require increasing penalty parameters to infinity to ensure feasibility, leading to demanding iteration complexity.

Motivated by the above discussion, we aim to answer the question:

\vspace{0.1in}
\begin{quote}
    {\em Can we design algorithms to solve problems of the form \eqref{eq:op} with an improved iteration complexity bound and efficient update rules?}
\end{quote}
\vspace{0.1in}

This paper answers this question in the affirmative. Notably, we develop an efficient single-loop first-order primal-dual method for solving problem \eqref{eq:op} such that based on a new augmented Lagrangian, for a given accuracy $\epsilon > 0$ to compute an $\epsilon$-approximate stationary solution (see Definition \ref{def_epsilon-kkt}). It achieves an iteration complexity of $\widetilde{\mathcal{O}}(1/\epsilon^{2})$ in terms of the number of gradient evaluations.\footnote{In this paper, the notation $\widetilde{\mathcal{O}}(\cdot)$ suppresses all logarithmic factors from the big-$\mathcal{O}$ notation.}

\subsection{Related Work} 
We review the literature on iteration complexity and convergence of AL and penalty-based methods for non-convex constrained problems.

\textbf{Linearly constrained non-convex problems.} Many existing works have focused on the class of problems where $g(\mathbf{x})$ in~\eqref{eq:op} is linear. \cite{hajinezhad2019perturbed} introduced a perturbed-proximal primal-dual algorithm, with an iteration complexity of $\widetilde{\mathcal{O}}(1/\epsilon^4)$, under the assumption of a feasible initialization. \cite{kong2023iteration} proposed proximal AL methods that obtain the improved complexity result of $\widetilde{\mathcal{O}}(1/\epsilon^3)$ under Slater’s condition. Finally, \cite{zhang2020proximal,zhang2022global} proposed a first-order single-loop proximal AL method that achieves $\mathcal{O}(1/\epsilon^2)$ iteration complexity, which relies on error bounds that depend on the Hoffman constant of the polyhedral constraints.\footnote{The Hoffman constant $\kappa$ is the smallest number such that for any $\x$, $\text{dist}(\x, \{ \y \mid A\y \leq b \}) \le \kappa \| (A\x - b)_+ \|$,
where $(A\x - b)_+$ denotes the positive part of $A\x - b$.
} However, estimating the Hoffman constant is known to be difficult in practice.

\textbf{Non-convex functional constrained problems.} There are several recent works that focus on the iteration complexity of first-order AL-based methods or penalty methods to solve~\eqref{eq:op}  \cite{cartis2011evaluation,scutari2016parallel,li2021rate,lin2022complexity,lu2022single,kong2022iteration,sahin2019inexact}. \cite{scutari2016parallel} proposed double-loop distributed primal-dual algorithms with asymptotic convergence guarantees,
under the coercivity assumption and Mangasarian-Fromovitz constraint qualification (MFCQ). 
% However, it has been observed that many non-convex problems do not have a strict relative interior, and thus have an unbounded set of multipliers \cite{haeser2019behavior}, which violates MFCQ. 
More recently, a set of methods have emerged employing the regularity condition (Assumption 4 of \cite{lin2019inexact}, (8) of \cite{sahin2019inexact}) for ensuring solution feasibility. \cite{sahin2019inexact} proposed a double-loop inexact AL method (iALM) that achieves an $\widetilde{\mathcal{O}}(1/\epsilon^4)$ iteration complexity. \cite{li2021rate} improved the iteration complexity to $\widetilde{\mathcal{O}}(1/\epsilon^3)$, which is obtained using a triple loop iALM. \cite{kong2022iteration} established an $\widetilde{\mathcal{O}}(1/\epsilon^3)$ complexity bound of the proximal AL method (NL-IAPIAL) for non-convex problems with nonlinear convex constraints. \cite{lu2022single} proposed the first single-loop gradient-based algorithm that achieves the best-known iteration complexity $\mathcal{O}(1/\epsilon^3)$ for \eqref{eq:op}. However, the regularity condition is non-standard and rather strong as it forces a relationship between feasibility of the generated iterates and first-order optimality. We are thus motivated to develop an algorithm that improves iteration complexity without requiring this assumption. While our proposed framework shares similarity with the above influential works, it further provides certain key components which we delineate in Section \ref{sec:conc}.

\subsection{Our Contributions}\label{sec:conc}
In this paper, we develop a novel algorithmic framework designed to solve challenging non-convex functional constrained optimization problems. Our paper offers the following key contributions:

\begin{itemize}
\item We propose a single-loop first-order algorithm that finds an $\epsilon$-solution with $\widetilde{\mathcal{O}}(1/\epsilon^2)$ iteration complexity for both non-smooth and smooth constraint functions without requiring the strong regularity condition used in other AL-based algorithms  \cite{li2021rate,lin2022complexity,lu2022single,sahin2019inexact}.

\item To establish the above results, we conduct a comprehensive convergence analysis of our method for both non-smooth and smooth constraint functions, establishing simple and concise proofs compared to existing works. Notably, the analysis does not impose assumptions on the surjectivity of the Jacobian $J_g(\x)$ \cite{bolte2018nonconvex,boct2020proximal}, or boundedness of penalty parameters \cite{grapiglia2021complexity}. It also does not require the feasibility of initialization as in \cite{boob2022stochastic,sun2021dual,xie2021complexity}, which itself is a non-convex and challenging task. 

\item By using a constant penalty parameter, our algorithm achieves improved computational efficiency and ease of implementation compared to existing schemes. Specifically, we neither require linear independence constraint qualification (LICQ) to ensure boundedness of penalty parameters \cite{solodov2009global}, nor computational efforts for careful updating scheme of the penalty parameters.
Our numerical results validate that compared with existing methods, using a fixed penalty parameter achieves more consistent progress toward solution stationarity and feasibility.

\item The algorithmic framework is flexible, enabling it to effectively handle various non-convex, smooth, and non-smooth functional constraints. Experimental results demonstrate its high effectiveness in terms of computational cost and performance, outperforming related algorithms that use regularization techniques and/or standard Lagrangian relaxation.
\end{itemize}

\subsection{Outline}
Section \ref{sec:prelim} provides the notation, definitions, and assumptions that we use throughout the paper. In Section \ref{sec:non_smooth} and \ref{sec:smooth}, we propose novel first-order primal-dual algorithms and establish their convergence results for non-smooth and smooth functional constraints, respectively. Section \ref{sec:experiments} presents numerical results on commonly encountered problems in signal processing and machine learning to demonstrate the effectiveness of the proposed algorithm. Detailed derivations are provided in the supplementary material due to space limitations.

\section{Preliminaries} \label{sec:prelim}
This section provides the notation, formal definitions, and assumptions utilized throughout this paper, forming the foundation for our proposed algorithmic approach and its convergence analysis.

We adopt the following notation: $\mathbb{R}^n$ denotes the $n$-dimensional Euclidean space, and $\mathbb{R}^n_+$ represents the non-negative orthant. We use $[m]$ to denote the set $\{1,\ldots,m\}$. The inner product between two vectors is denoted by $\left\langle \cdot, \cdot \right\rangle$, and the Euclidean norm of matrices and vectors is denoted by $\left\| \: \cdot \: \right\|$. The distance function between a vector $\x$ and a set $\mathcal{X} \subseteq \mathbb{R}^n$ is defined as $\text{dist}(\x,\mathcal{X}) := \inf_{\y \in \mathcal{X}} \|\y -\x \|.$
For a proper extended real-valued function $r$, its {\em domain} is defined as $\textrm{dom}(r) := \left\lbrace \mathbf{x} \in \mathbb{R}^n: r(\x) < +\infty  \right\rbrace$. A function $r$ is considered {\em proper} if $\text{dom}(r) \neq \emptyset$ and it does not take the value $- \infty$. It is {\em closed} if it is lower semicontinuous, meaning $\liminf_{\x \rightarrow \x^0}  r(\x) \geq r(\x^0)$ for any $\x^0 \in \mathbb{R}^n$. For a convex function $r$ at $\x$, its {\em subgradient} is denoted by {\small $\partial r(\x):= \left\lbrace \dd \in \mathbb{R}^n: r(\y) \geq r(\x) + \left\langle \dd, \y - \x \right\rangle,   \forall \y \in \mathbb{R}^n, \ \x \in \text{dom}(r) \right\rbrace$}. The {\em proximal map} associated with a proper, closed, and convex function $r:\mathbb{R}^n \rightarrow \mathbb{R} \cup \left\lbrace + \infty \right\rbrace$ at $\x \in \mathbb{R}^n$ with $\eta>0$ is uniquely defined by $\text{prox}_{\eta r}(\x)={\text{argmin}_{\y \in \mathbb{R}^n}}\left\lbrace r(\y) + \frac{1}{2\eta} \| \x - \y\|^2 \right\rbrace.$

Next, we provide the formal definitions and assumptions for the class of functions, and the optimality measure under consideration. 

Assuming a suitable constraint qualification (CQ) holds, the stationary solutions of problem \eqref{eq:op} can be characterized by the points $(\x^{\ast}, \boldsymbol{\nu}^{\ast})$ satisfying the Karush-Kuhn-Tucker (KKT) conditions \cite{bertsekas1999nonlinear}.
\begin{definition}[The KKT point] \label{def_kkt}
A point $\mathbf{x}^\ast$ is called a \emph{KKT point} for problem \eqref{eq:op} if there exists $\boldsymbol{\nu}^\ast \in \mathbb{R}^m_+$ such that
\begin{equation} \label{eq:def_kkt}
\begin{cases}
    \0 \in \nabla f(\x^\ast) + \partial r(\x^\ast) + J_g(\x^\ast) \boldsymbol{\nu}^\ast, \\   
    g_j(\x^\ast) \leq 0, \ \ \boldsymbol{\nu}_j g_j(\x^\ast)  =0, \ \ j \in [m]. 
\end{cases}
\end{equation}
\end{definition}

A suitable CQ is necessary for the existence of multipliers that satisfy the KKT conditions (e.g., MFCQ, Constant Positive Linear Dependence (CPLD), and others; see \cite{andreani2022scaled, bertsekas1999nonlinear}). In practice, it is difficult to find an exact KKT solution $(\x^{\ast}, \boldsymbol{\nu}^{\ast})$ that satisfies \eqref{eq:def_kkt}. Thus, one typically aims to find an approximate KKT solution, defined as an $\epsilon$-KKT solution next.

\begin{definition}[$\epsilon$-KKT solution, Definition 2 of \cite{lu2022single}] \label{def_epsilon-kkt}
Given $\epsilon>0$, a point $\mathbf{x}^\star$ is called an {\em $\epsilon$-KKT solution} for problem \eqref{eq:op} if there exists $\boldsymbol{\nu}^\star \in \mathbb{R}^m_+$ such that
\begin{equation} \label{eq:def_epsilon-kkt}
\begin{cases}
\vv^\star \in \nabla f(\x^\star) + \partial r(\x^\star) + J_g(\x^\star)\boldsymbol{\nu}^\star, \quad \|\vv^\star\| \leq \epsilon, \\
\| \max\{0,g(\x^\star)\} \| \leq \epsilon, \quad \sum_{j=1}^m |\nu_j g_j(\x^\star)|\le\epsilon ,\notag %\left\langle \boldsymbol{\nu}, g(\x^\star) \right\rangle \leq \epsilon, \notag
\end{cases}
\end{equation}
where $\max \{\0, g(\x^\star)\}$ denotes the component-wise maximum of $g(\x^\star)$ and the zero vector $\0$ at $\x^\star$. 
\end{definition}

To establish the ensuing analysis, we introduce the following standard assumptions for problem \eqref{eq:op}: 

\begin{assumption} \label{assumption_kkt}
There exists a point $(\x, \boldsymbol{\nu}) \in \text{dom}(r) \times \mathbb{R}^m$ satisfying the KKT conditions \eqref{eq:def_kkt}.
\end{assumption}

\begin{assumption}\label{assumption_lipschitz_f}
$\nabla f$ is $L_f$-Lipschitz continuous on $\text{dom}(r)$. That is, there exist $L_f >0$ such that 
\begin{equation}
    \|\nabla f(\x) -\nabla f(\x^\prime)\| \leq L_f \|\x - \x^\prime\|, \ \forall \x, \x^\prime \in \text{dom}(r), \notag 
\end{equation}
\end{assumption}

\begin{assumption}\label{assumption_lipschitz_g}
$\nabla g$ is $L_g$-Lipschitz continuous on $\text{dom}(r)$. That is, there exist $L_g >0$ such that 
\begin{equation}
    \|\nabla g(\x) -\nabla g(\x^\prime)\|  \leq L_g \|\x - \x^\prime\|, \ \forall \x, \x^\prime \in \text{dom}(r). \notag
\end{equation}
\end{assumption}

\begin{assumption} \label{assumption_bounded_domain}
The domain of $r$ is compact, i.e.,
$D_{\x} := {\max}_{\x, \x^\prime \in \text{dom}(r)} \|\x - \x^\prime\| < + \infty.$ 
\end{assumption}

\begin{assumption}[Assumption 3 of \cite{na2023inequality}, Assumption 1 of \cite{na2023adaptive}, Assumption 3.1 of \cite{hong2023constrained}] \label{assumption_bounded_multiplier}
The iterates $\{\la_k\}$ generated by iterative methods for problem \eqref{eq:op} estimating $\boldsymbol{\nu}^\ast$ satisfying Definition \ref{def_kkt} are contained in a convex compact subset $\Lambda\subset\mathbb{R}^m$.
\end{assumption}

These assumptions are considered standard in the optimization literature \cite{boob2022stochastic,huang2023oracle} and are satisfied by a broad range of practical problems in signal processing and machine learning \cite{bolte2018nonconvex,li2021augmented,li2021rate,lu2022single,kong2022iteration,lu2023iteration}. Our work distinguishes itself by not requiring certain restrictive assumptions beyond those stated above, such as the surjectivity of $\nabla g(\x)$ (or $\nabla g(\x) \nabla g(\x)^{\top}$ being positive definite) \cite{bolte2018nonconvex,bot2019proximal,boct2020proximal,li2015global},  Slater’s condition \cite{boob2022stochastic,kong2022iteration}, or more crucially feasibility of initialization \cite{boob2022stochastic,hajinezhad2019perturbed,xie2021complexity} as that by itself is a non-convex problem when $g$ is nonconvex.
For problems with an unbounded $\text{dom}(r)$, they can be reformulated to satisfy Assumption \ref{assumption_bounded_domain}; for instance, if $f$ is bounded below and $r$ is coercive, the problem can be transformed to one with $f + r$ for some $r$ (e.g., norm functions) with a compact domain  \cite{lu2023iteration}. Notably, this can be implemented in  practice for machine learning problem using the standard practice of weight or gradient clipping.
Moreover, Assumption \ref{assumption_bounded_multiplier} is commonly used in the convergence analysis of constrained optimization algorithms \cite{nocedal2006numerical,bertsekas2014constrained,birgin2014practical,hong2016convergence,hong2023constrained,na2023adaptive,na2023inequality}. From certain constraint qualification, such as MFCQ or CPLD, it can also be derived that the set of KKT multipliers corresponding to a local minimum is bounded.

Furthermore, under Assumption \ref{assumption_bounded_domain}, there exist constants $B_g > 0$ and $M_g > 0$ such that 
\begin{align}
\underset{\x \in \text{dom}(r)}{\max}\| g(\x) \| \leq B_g \ \ \text{and} \ \
 \underset{\x \in \text{dom}(r)}{\max}\| \nabla g(\x) \| \leq M_g, \label{eq:bound_jacobian_gx}
\end{align}
which implies the Lipschitz continuity of $g$ \cite[Chapter 9.B]{rockafellar2009variational}: $\| g(\x) - g(\x^\prime) \|  \leq M_g \|\x - \x^\prime\|, \ \ \forall \x, \x^\prime \in \text{dom}(r).$

In the next section, we consider a non-convex optimization problem with non-smooth functional constraint. In this case, Assumption \ref{assumption_bounded_subgradient} implies the Lipschitz continuity of the subgradient, instead of \eqref{eq:bound_jacobian_gx} or Assumption \ref{assumption_lipschitz_g}.

\begin{assumption}\label{assumption_bounded_subgradient}
$g$ is continuous with $\partial g(\x) \neq \emptyset$  on $\text{dom}(r)$, and there exists a constant $M_g > 0$ such that ${\max}_{\x \in \text{dom}(r)}\| \partial g(\x) \| \leq M_g$.
\end{assumption}

\section{Non-convex Non-smooth Constraints} \label{sec:non_smooth}
In this section, we consider the non-convex optimization problem \eqref{eq:op} with a non-smooth functional constraint $g(\cdot)$. We first introduce a novel Lagrangian with a structure designed for developing an efficient algorithm that solves the non-smooth constrained problem. A critical feature of the resulting algorithm is its reliance on fixed parameters $\alpha$, $\beta$ and $\rho=\alpha/(1+\alpha\beta)$), which eliminates the need for the sensitive and iterative adjustments required by many existing schemes. This design not only simplifies implementation but also enhances computational efficiency. Empirical results demonstrate that the algorithm's performance is not sensitive to the choice of $\alpha$ and $\beta$, further highlighting its robustness.

\subsection{A Variant of Proximal-Perturbed Lagrangian}
Motivated by the reformulation techniques in \cite{bertsekas1999nonlinear, bertsekas2014constrained}, we employ {\em perturbation} variables $\z \in \mathbb{R}^m$ and slack variables $\uu \in \mathbb{R}^m_+$. By setting $g(\x) + \uu = \z$ and $\z = \0$, we first transform problem \eqref{eq:op} into an equivalent equality-constrained formulation: 
\begin{equation} \label{eq:ep}
\begin{aligned}
\underset{\x \in \mathbb{R}^n, \uu \in \mathbb{R}^m_+, \z \in \mathbb{R}^m }
	{\mathrm{min}}    f(\x)+r(\x)   \quad 
	\mathrm{s.\:t.}  \quad  g(\x) + \uu = \z, 
	                      \quad \z = \0. 
\end{aligned}
\end{equation}
Clearly, for $\z^\ast = \0$ and $\uu^\ast \geq 0 $, the extended formulation \eqref{eq:ep} is equivalent to problem \eqref{eq:op}. 

For this formulation \eqref{eq:ep}, we define a variant of {\em the Proximal-Perturbed Lagrangian} (P-Lagrangian) from \cite{kim2021equilibrium} as follows:
\begin{align} \label{eq:ppl} 
\mathcal{L}_{\alpha\beta}(\x,\uu,\z,\la,\m)  :=  f(\x) + \left\langle \la, g(\x) + \uu - \z \right\rangle + \left\langle \m, \z \right\rangle + \frac{\alpha}{2}\| \z \|^2  - \frac{\beta}{2}\| \la - \m \|^2 + r(\x),  
\end{align}
where $\la \in \mathbb{R}^m$ is a multiplier (dual) for the constraint $g(\x) + \uu - \z = 0$, $\m \in \mathbb{R}^m$ is an {\em auxiliary multiplier} for the constraint $\z = \0$, $\alpha > 0$ is a penalty parameter, and $\beta>0$ is a proximal parameter. 

Given $(\la,\m)$, $\mathcal{L}_{\alpha\beta}$ can be minimized with respect to $\z$ in closed form: 
$$\z(\la,\m)=(\la-\m)/ {\alpha}.$$ 
Substituting $\z(\la, \m)$ back into $\mathcal{L}_{\alpha\beta}$ yields the reduced P-Lagrangian: 
\begin{align} \label{eq:reduced_ppl}
\mathcal{L}_{\alpha\beta}(\x,\uu,\z(\la, \m),\la,\m) 
& = f(\x) + \left\langle \la, g(\x) + \uu  \right\rangle - \frac{1}{2\rho} \| \la - \m \|^2 + r(\x), 
\end{align}
where $\rho:=\frac{\alpha}{1+\alpha\beta}$. Note that \eqref{eq:reduced_ppl} is $\frac{1}{\rho}$-strongly concave in $\la$ for a fixed $\m$. This property guarantees a unique maximizer for $\la$, which can be found in closed form:
\begin{align} 
\la(\x,\m) = \underset{\la \in \mathbb{R}^m}{\textrm{argmax}} \; \mathcal{L}_{\alpha\beta}(\x,\uu,\z(\la,\m),\la,\m) = \m + \rho (g(\x) + \uu), \label{eq:opt_lambda}
\end{align}
which is well-defined and is used for the update of $\la_{k+1}$ in \eqref{eq:lambda_update}.

\subsection{Description of Algorithm}
Based on the P-Lagrangian, we propose the P-Lagrangian based Alternating Direction Algorithm (PLADA) for solving problem \eqref{eq:op} with non-smooth constraints. The complete procedure is detailed in Algorithm \ref{alg:plada}.

\begin{algorithm} [t!]	
\caption{P-Lagrangian based Alternating Direction Algorithm (PLADA)}  \label{alg:plada}
\begin{algorithmic}[1]

\State \textbf{Input:} Initialization $(\x_0,\uu_0,\z_0,\la_0,\m_0)$, and parameters $\alpha > 1$, $\beta \in (0,1)$, $\rho =\frac{\alpha}{1+ \alpha \beta}$, $0 < \eta < \frac{1}{L_f +   3 \rho M_g^2}$, $0 < \tau < \frac{1}{3\rho}$, $\delta_0\in(0,1]$, and $K$.

\For{$k= 0, 1, \dots, K $} 

\State $\x_{k+1} = \underset{\x \in \mathbb{R}^n}{\mathrm{argmin}} \left\lbrace \left\langle \nabla f(\x_k), \x  \right\rangle + \left\langle \la_k, g(\x) \right\rangle + {1}/{2\eta} \| \x - \x_k \|^2 + r(\x) \right\rbrace$;

\State $\uu_{k+1} = \Pi_{\mathbb{R}_+^m}[\uu_{k}  - \tau \la_{k}]$; 

\State $\m_{k+1} = \m_k + \sigma_k (\la_k - \m_k), \  \sigma_k =  \min\left\{\sigma_0, \frac{\rho\delta_k}{\| \la_k - \m_k \|^2 + 1}\right\}$;

\State $\la_{k+1} = \m_{k+1} + \rho (g(\x_{k+1}) + \uu_{k+1})$;

\State $\z_{k+1} = \frac{1}{\alpha}(\la_{k+1} - \m_{k+1})$;

\EndFor
\end{algorithmic} 
\end{algorithm}

Each iteration of PLADA consists of a sequence of updates for the primal, dual, and auxiliary variables. The primal variable $\x$ is updated using a proximal gradient step:
\begin{equation} 
\begin{aligned}
\x_{k+1} & = \underset{\x \in \mathbb{R}^n}{\mathrm{argmin}} \left\lbrace \left\langle \nabla f(\x_k), \x  \right\rangle + \left\langle \la_k, g(\x) \right\rangle + {1}/{2\eta} \| \x - \x_k \|^2 + r(\x) \right\rbrace. \label{eq:x_update}
\end{aligned}
\end{equation}

The slack variable $\uu$ is updated via projected gradient descent onto $\mathbb{R}_+^m$: 
\begin{align}
\uu_{k+1} 
 & = \underset{\uu \in \mathbb{R}_+^m}{\mathrm{argmin}} \left\lbrace   \left\langle \nabla_{\uu} \mathcal{L}_{\alpha\beta}(\x_k, \uu_k,\z_k,\la_k,\m_k), \uu  - \uu_k \right\rangle + {1}/{2\tau} \| \uu - \uu_k \|^2 \right\rbrace  = \Pi_{\mathbb{R}_+^m}[\uu_{k}  - \tau \la_{k}],  \label{eq:u_update}
\end{align}
where, without loss of generality, we can construct an upper bound $\max_{k\ge1}\{\uu_k\}\le B_g$ as $\|g(\x) \| \leq B_g$.

The auxiliary multiplier $\m$ is updated using a gradient ascent step on \eqref{eq:reduced_ppl}:
\begin{align} 
\m_{k+1}  
= \m_{k} + \sigma_k (z_k + \beta(\la_k - \m_k))  
= \m_{k} + \frac{\sigma_k}{\rho} (\la_k - \m_k), \label{eq:mu_update}
\end{align}
with a diminishing step-size $\sigma_k = \min \left\lbrace \sigma_0, \rho \delta_k/({\|\la_k - \m_k \|^2 +1}) \right\rbrace$, which is governed by a sequence $\delta_k>0$ that satisfies the standard conditions: 
\begin{equation} \label{eq:delta_condition}
 \delta_{0} \in (0,1], \ \ \lim_{k \to \infty} \delta_k = 0, \ \ \text{and} \ \ \sum_{k=0}^{\infty} \delta_k = +\infty. 
\end{equation}
In Algorithm \ref{alg:plada}, we choose $\delta_k = \kappa \cdot (k+1)^{-1}$ with $\kappa > 0,$ so that these conditions hold.

The main dual variable $\la$ is subsequently updated via an exact maximization on \eqref{eq:reduced_ppl}:
\begin{align} 
\la_{k+1}  
 = \m_{k+1} + \rho \left(g(\x_{k+1}) + \uu_{k+1} \right). \label{eq:lambda_update}
\end{align}

Finally the auxiliary variable $\z$ is updated by an exact minimization on \eqref{eq:ppl}: 
\begin{equation} 
\z_{k+1} = (\la_{k+1}-\m_{k+1})/{\alpha}.  \label{eq:z_update}
\end{equation}

A key advantage of this framework is that the parameters $\alpha, \beta $, and the dual step size $\rho$ are constants, independent of the number of iterations $k$. In Section \ref{sec:additional_experiments}, we demonstrate the robustness of the algorithm with respect to the choices of $\alpha$ and $\beta$.

\subsection{Convergence Guarantees}
This subsection establishes the convergence guarantees for Algorithm \ref{alg:plada}. Our analysis begins by defining the necessary concepts from subdifferential calculus for non-smooth functions. We then present a series of technical lemmas that establish convergence of the algorithm's iterates with respect to the Definitions \ref{def_kkt} and \ref{def_epsilon-kkt}. These results culminate in theorems proving the algorithm's asymptotic convergence as well as the non-asymptotic rate of convergence on expectation. All proofs are contained in Supplementary Material for concise main paper.

We first recall some essential definitions from variational analysis. We denote the Jacobian matrix of $g$ at $\x$ by $\partial g(\x)$. For any set $\mathcal{X} \subseteq\mathbb{R}^d$, its indicator function $\mathbb{I}_{\mathcal{X}}$ is defined by $\mathbb{I}_{\mathcal{X}} = 0   \text{ if }   \x \in \mathcal{X} \ \text{ and }  + \infty, \ \text{ otherwise}.$  Note that $\arg\min_{\x \in \mathcal{X}} F(\x) = \arg\min_{\x \in \mathbb{R}^d} \{\varphi(\x) := F(\x) + \mathbb{I}_{\mathcal{X}}(\x)\}.$ 

\begin{definition} [Definition 8.3 of \cite{rockafellar2009variational}]
Let $g_i:\mathbb{R}^d \rightarrow \mathbb{R} \cup \left\lbrace + \infty \right\rbrace$ be a proper and lower semicontinuous function. 
For each $\x \in \mathcal{X}$, the \emph{Frechet subdifferential} of $g$ of $\x$ is given by
\begin{align}
\widehat{\partial} g_i(\x) := \left\lbrace  d_{k} \in \mathbb {R}^d: \underset{w \rightarrow \x}{\mathrm{lim \: inf}} \: \frac{g_i(w) - g_i(\x)-\left\langle  d, w - \x \right\rangle }{\left\|w - \x \right\|} \geq 0 \right\rbrace.     \notag  
\end{align}
\end{definition}

\begin{definition}
The {\em limiting subdifferencial} (or simply the subdifferential) of $g_i$ at $\x \in \mathbb{R}^d$ is defined as
\begin{align}
{\partial} g_i(\x) := \left\lbrace d \in \mathbb {R}^d: \exists \, \x_k {\rightarrow} \x \ \textrm{and} \ d_{k} \in \widehat{\partial} g_i(\x_k) \ \textrm{with} \ d_{k} \rightarrow d \ \textrm{as} \  k \rightarrow \infty \right\rbrace.       \notag
\end{align}
\end{definition}
The inclusion $\widehat{\partial} g_i(\x) \subseteq {\partial} g_i(\x)$ holds for each $\x \in \mathcal{X}$ and we set $\widehat{\partial} g_i(\x) = {\partial} g_i(\x) = \emptyset$ for $\x \notin \mathcal{X}$. Each $ d \in {\partial} g_i(\x)$  is called a subgradient of $g_i$ at $\x$.

We now present the main convergence theorems for Algorithm \ref{alg:plada}, establishing the asymptotic convergence to a KKT solution as defined in Definition \ref{def_kkt}.

\begin{lemma} [Primal Stationarity] \label{lem_primal_convergence_plada}
Let $\left\{\ww_{k} \right\}$ be the sequence generated by Algorithm \ref{alg:plada}, and let $\{\pp_k := (\x_k,\uu_{k},\z_{k})\}$ be the primal sequence. Under Assumptions \ref{assumption_kkt}, \ref{assumption_lipschitz_f}, \ref{assumption_bounded_domain} and \ref{assumption_bounded_subgradient}, the running averaged of the squared primal stationarity residual converges to zero:
\begin{equation}
  \lim_{T \rightarrow \infty} \frac{1}{T}\sum_{k=0}^{T-1} \| \boldsymbol{\zeta}_{\pp}^{k+1}\|^2 = 0,  \ \ \text{with the rate of} \ \mathcal{O}\left(\frac{\log(T)}{T}\right) = \tilde{\mathcal{O}}\left(\frac{1}{T}\right), \notag
\end{equation}
where $\boldsymbol{\zeta}_{\pp}^{k+1} :=(\boldsymbol{\zeta}_{\x}^{k+1},\boldsymbol{\zeta}_{\uu}^{k+1},\boldsymbol{\zeta}_{\z}^{k+1}) \in \partial_{\pp} \mathcal{L}_{\alpha\beta} (\ww_{k+1})$. Hence, any limit point $(\bar{\x},\bar{\la})$ of the sequence $(\x_k,\la_k)$ satisfies the stationarity condition of the original problem: $\0 \in \nabla f(\bar{\x}) + \partial r(\bar{\x}) + \partial g(\bar{\x})^{\top}\bar{\la}$.
\end{lemma}

Lemma \ref{lem_primal_convergence_plada} establishes that the primal iterates in an ergodic sense. The running-average of the squared stationarity residual (first-order optimality) converges to zero at a rate of $\tilde{\mathcal{O}}\left(1/T\right)$\footnote{The notation $\tilde{\mathcal{O}}(\cdot)$ suppresses all logarithmic factors from the big-$\mathcal{O}$ notation.}:
\begin{align}
&  \frac{1}{T}\sum_{k=0}^{T-1}\| \boldsymbol{\zeta}_{\pp}^{k+1} \|^2 = \mathcal{O}\left(\frac{\log(T)}{T}\right)=\tilde{\mathcal{O}}\left(\frac{1}{T}\right). \notag 
\end{align}

\begin{remark} \label{rmk_rate_iterates_plada}
An immediate consequence of Lemma \ref{lem_one_iter_ppl} and \ref{lem_primal_convergence_plada} is that the squared successive difference of the primal iterates also converges at the same rate:
\begin{align}
&  \frac{1}{T}\sum_{k=0}^{T}\left(\| \x_{k+1} - \x_{k} \|^2 + \| \uu_{k+1} - \uu_{k} \|^2 \right) = \tilde{\mathcal{O}}\left(\frac{1}{T}\right). \notag 
\end{align}
Note that Lemma \ref{lem_primal_convergence_plada} states the convergence in an ergodic sense, which involves averaging over the sequence of iterates or employing a randomized output selection from $T$ iterates. Thus, the primal iterates converge with $\tilde{\mathcal{O}}(1/T)$ in an ergodic sense.
\end{remark}

Next, we establish that the iterates converge to a feasible point. This is achieved by showing that the gap between the dual variables vanishes as $k\to\infty$: $\lim_{k \rightarrow \infty} \|\la_{k} - \m_{k} \| = 0$. 

\begin{lemma} [Primal Feasibility]\label{lem_feasibility_plada}
Let $\left\{\ww_{k} \right\}$ be the sequence generated by Algorithm \ref{alg:plada}. Under Assumptions \ref{assumption_kkt}, \ref{assumption_lipschitz_f}, \ref{assumption_bounded_domain}, \ref{assumption_bounded_multiplier} and \ref{assumption_bounded_subgradient}, the gap between the dual variables vanishes:
$$ \underset{k\rightarrow \infty}{\lim} \| \la_{k} - \m_{k} \| = 0.$$
Consequently, any limit point $\bar{\x}$ of the sequence $\{\x_k\}$ is feasible for problem \eqref{eq:op}, satisfying $ g(\bar{\x}) \leq 0$. 
\end{lemma}

\begin{lemma} [Dual feasibility] \label{lem_dual_feasibility_plada}
Let $\bar{\la}$ be a limit point of the sequence $\{\la_k\}$ generated by Algorithm \ref{alg:plada}. Then, $\bar{\la}$ is feasible for the dual problem of \eqref{eq:op}, satisfying $\bar{\la} \ge \0$.
\end{lemma}

\begin{lemma} [Complementary slackness] \label{lem_complementary_slackness_plada}
Let $(\bar{\x},\bar{\la})$ be a limit point of the sequence $\{(\x_k,\la_k)\}$ generated by Algorithm \ref{alg:plada}. Then, $(\bar{\x},\bar{\la})$ satisfies the complementary slackness for problem of \eqref{eq:op}, i.e., $\bar{\la}^{\top}g(\bar{\x}) = 0$.
\end{lemma}

\begin{theorem} [Convergence to a KKT Point] \label{thm_KKT_plada}
Let $\{\mathbf{w}_k = (\x_k, \uu_k, \z_k, \la_k, \m_k)\}$ be the sequence generated by Algorithm \ref{alg:plada}. Under Assumptions \ref{assumption_kkt}, \ref{assumption_lipschitz_f}, \ref{assumption_bounded_domain}, \ref{assumption_bounded_multiplier} and \ref{assumption_bounded_subgradient}, any limit point $\bar{\mathbf{w}}$ of the sequence $\{\mathbf{w}_k\}$ corresponds to a KKT point of the original problem \eqref{eq:op} as defined in Definition \ref{def_kkt}.
\end{theorem}

\begin{proof}
By Lemmas \ref{lem_primal_convergence_plada}, \ref{lem_feasibility_plada}, \ref{lem_dual_feasibility_plada} and \ref{lem_complementary_slackness_plada}, $\bar{\mathbf{w}}$ satisfies the KKT conditions as defined in Definition \ref{def_kkt}.
\end{proof}

To find the non-asymptotic ergodic rate of convergence, we construct a non-negative auxiliary sequence as
\begin{equation}
    \boldsymbol{\nu}_k := \la_k + \frac{1}{\tau}(\uu_{k+1} - \uu_k). \label{eq:nu}
\end{equation}
Note that by the first-order optimality of $\uu_{k+1}$ for \eqref{eq:u_update},
\begin{equation}
    \uu_k-\tau\la_k \le \uu_{k+1} \iff \la_k + \frac{1}{\tau}(\uu_{k+1} - \uu_k) \ge \0, \notag
\end{equation}
and $\boldsymbol{\nu}_k\ge\0$ for all $k\ge0$. To show the ergodic convergence to a $\epsilon$-KKT solution, define the running average of this non-negative multiplier: $\bar{\boldsymbol{\nu}}_T := \frac{1}{T}\sum_{k=0}^{T-1} \boldsymbol{\nu}_k$.

\begin{lemma} [Non-asymptotic rate for primal stationarity] \label{lem_rate_primal_stationarity_plada}
Let $\{\pp_k := (\x_k,\uu_{k},\z_{k})\}$ be the primal sequence generated by Algorithm \ref{alg:plada} using the non-negative multiplier $\bar{\boldsymbol{\nu}}_T$. Under Assumptions \ref{assumption_kkt}, \ref{assumption_lipschitz_f}, \ref{assumption_bounded_domain} and \ref{assumption_bounded_subgradient}, average primal stationarity residual converges as
\begin{equation}
    \frac{1}{T}\sum_{k=0}^{T-1}\| \boldsymbol{\zeta}_{\pp}^{k+1} \|^2 =  \tilde{\mathcal{O}}\left(\frac{1}{T}\right), \notag 
\end{equation}
where $\boldsymbol{\zeta}_{\pp}^{k+1} :=(\boldsymbol{\zeta}_{\x}^{k+1},\boldsymbol{\zeta}_{\uu}^{k+1},\boldsymbol{\zeta}_{\z}^{k+1}) \in \partial_{\pp} \mathcal{L}_{\alpha\beta} (\ww_{k+1})$.
\end{lemma}

\begin{lemma} [Non-asymptotic rate for primal feasibility] \label{lem_rate_primal_feasibility_plada}
Let $\{\x_k\}$ be the primal sequence generated by Algorithm \ref{alg:plada}. Under Assumptions \ref{assumption_kkt}, \ref{assumption_lipschitz_f}, \ref{assumption_bounded_domain}, \ref{assumption_bounded_multiplier} and \ref{assumption_bounded_subgradient}, average primal feasibility violation converges as
\begin{equation}
\frac{1}{T}\sum_{k=0}^{T-1} \|[g(\x_{k+1})]^{+}\|^2 =  \tilde{\mathcal{O}}\left(\frac{1}{T}\right). \label{eq:primal_feasibility_rate_plada}
\end{equation}
\end{lemma}

\begin{lemma} [Non-asymptotic rate for complementary slackness] \label{lem_rate_complementary_slackness_plada}
Let $\{\x_k, \boldsymbol{\nu}_k\}$ be the sequence generated by Algorithm \ref{alg:plada}. Under Assumptions \ref{assumption_kkt}, \ref{assumption_lipschitz_f}, \ref{assumption_bounded_domain}, \ref{assumption_bounded_multiplier} and \ref{assumption_bounded_subgradient}, average complementary slackness for Definition \ref{def_epsilon-kkt} converges as
\begin{equation}
    \frac{1}{T}\sum_{k=0}^{T-1} \sum_{j=1}^m |\nu_{j,k} g_j(\x_{k+1})| =\tilde{\mathcal{O}}(1/\sqrt{T}).
\end{equation}
\end{lemma}

\begin{theorem} [Non-asymptotic Rate of Convergence] \label{thm_rate_plada}
Under Assumptions \ref{assumption_kkt}, \ref{assumption_lipschitz_f}, \ref{assumption_bounded_domain}, \ref{assumption_bounded_multiplier} and \ref{assumption_bounded_subgradient}, there exists a uniformly-at-random iterate $k\in\{0,\cdots,K-1\}$ from the sequence generated by Algorithm \ref{alg:plada} that is a $\epsilon$-KKT solution to problem \ref{eq:op} on expectation as defined in Definition \ref{def_epsilon-kkt}. The total number of iterations required to achieve this is bounded by $\tilde{\mathcal{O}}(1/\epsilon^2)$.
\end{theorem}

\begin{proof}
By Lemmas \ref{lem_rate_primal_stationarity_plada}, \ref{lem_rate_primal_feasibility_plada} and \ref{lem_rate_complementary_slackness_plada} and the construction of the non-negative multiplier sequence, we have the ergodic convergence of $\epsilon$-KKT residuals at a rate of $\tilde{\mathcal{O}}(1/\sqrt{T})$. Hence, all conditions for an $\epsilon$-KKT solution are met with an overall iteration complexity of $\tilde{\mathcal{O}}(1/\epsilon^2)$. Therefore, if one chooses one of the algorithm iterates uniformly at random, that solution will be $\epsilon$-KKT on expectation.
\end{proof}

\section{Non-convex Continuously Differentiable Constraints} \label{sec:smooth}
In this section, we present our novel form of augmented Lagrangian (Section~\ref{ssec:ppal}), termed Proximal-Perturbed Augmented Lagrangian (PPAL), and propose a single-loop primal-dual algorithm based on it (Section~\ref{ssec:algo}). 

\subsection{Proximal-Perturbed Augmented Lagrangian} \label{ssec:ppal}
Our approach for the smooth case is built upon {\em Proximal-Perturbed Augmented Lagrangian} (PPAL). As in the non-smooth case, we work with the equivalent equality-constrained formulation of problem \eqref{eq:op} and define the PPAL as:
\begin{align} \label{eq:ppal} 
\mathcal{L}_{\rho}(\x,\uu,\z,\la,\m)  = \ell_{\rho}(\x,\uu,\z,\la,\m) +  r(\x),  
\end{align}
where 
\begin{equation}
   \ell_{\rho}(\cdot) :=  f(\x) + \left\langle \la, g(\x) + \uu - \z \right\rangle + \left\langle \m, \z \right\rangle + \frac{\alpha}{2}\| \z \|^2 - \frac{\beta}{2}\| \la - \m \|^2 + \frac{\rho}{2}\| g(\x) + \uu \|^2.  
\end{equation}

Analogous to \eqref{eq:reduced_ppl}, substituting the expression for $\z(\la, \m)$ back into \eqref{eq:ppal} yields the reduced PPAL: 
\begin{align} \label{eq:reduced_ppal}
\mathcal{L}_{\rho}(\x,\uu,\z(\la, \m),\la,\m) 
& = f(\x) + \left\langle \la, g(\x) + \uu  \right\rangle - \frac{1}{2\rho} \| \la - \m \|^2 + \frac{\rho}{2} \| g(\x) + \uu \|^2+ r(\x).
\end{align}

\subsection{Description of Algorithm}
\label{ssec:algo}

\begin{algorithm} [t!]	
\caption{PPAL-based first-order Algorithm (\text{PPALA})}  \label{alg:ppala}
\begin{algorithmic}[1]

\State \textbf{Input:} Initialization $(\x_0,\uu_0,\z_0,\la_0,\m_0)$, and parameters $\alpha > 1$, $\beta \in (0,1)$, $\rho =\frac{\alpha}{1+ \alpha \beta}$, $0 < \eta < \frac{1}{L_{\ell} +   3 \rho M_g^2}$, $0 < \tau < \frac{1}{2\rho}$, and $K$.

\For{$k= 0, 1, \dots, K $} 

\State\label{step1} {Compute  $\x_{k+1}$ by the proximal gradient scheme:} 
\begin{align}
	\x_{k+1} & = \underset{\x \in \mathbb{R}^n}{\mathrm{argmin}} \left\lbrace  \left\langle \nabla_{\x} \ell_{\rho}(\x_k,\uu_k,\z_k,\la_k,\m_k), \x - \x_k \right\rangle + ({1}/{2\eta}) \| \x - \x_k \|^2 + r(\x) \right\rbrace; \notag
\end{align}

\State\label{step1-1} {Compute $\uu_{k+1}$ by the projected gradient descent:}
\begin{equation}
    \uu_{k+1} = \Pi_{\mathbb{R}_+^m}[\uu_{k}  - \tau (\la_{k} + \rho (g(\x_{k+1}) + \uu_{k}))]; \label{eq:u_update_ppala}
\end{equation}

\State\label{step2-1} {Update  the auxiliary multiplier $\m_{k+1}$ by:}
\begin{equation}
  \m_{k+1} = \m_k + \sigma_k (\la_k - \m_k), \  \sigma_k =  \frac{\delta_k}{\| \la_k - \m_k \|^2 + 1}; \label{eq:mu_update_ppala}
\end{equation}

\State\label{step2-2} {Update the multiplier $\la_{k+1}$ by}
$$\la_{k+1}  = \m_{k+1} + \rho (g(\x_{k+1}) + \uu_{k+1});$$

\State\label{step3} {Compute $\z_{k+1}$ by}
$$\z_{k+1} = \frac{1}{\alpha}(\la_{k+1} - \m_{k+1});$$

\EndFor 		
\end{algorithmic} 
\end{algorithm}

We propose a single-loop first-order algorithm based on the properties of our PPAL, which computes a stationary solution to problem \eqref{eq:op}. The complete procedure is detailed in Algorithm \ref{alg:ppala}, where $L_{\ell} := L_f + L_g B_{\lambda} + \rho(L_g B_u + L_g B_g + M_g^2)$. 

Each iteration of PPALA involves updating the primal and dual variables. The primal variable $\x$ is updated inexactly by the {\em proximal gradient mapping} (see e.g., \cite{bolte2014proximal}), which can be rewritten as
\begin{equation}
\label{eq:x_update_ppala}
\x_{k+1} = \text{prox}_{\eta r} \left[ \x_k - \eta \nabla_\x \ell_{\rho}(\x_k,\uu_k,\z_k,\la_k,\m_k)\right].
\end{equation}

Next, the slack variable $\uu$ is upated via projected gradient descent: 
\begin{equation}
\begin{aligned}
\uu_{k+1} 
& = \Pi_{\mathbb{R}_+^m} [\uu_{k} - \tau (\nabla_{\uu} \mathcal{L}_{\rho}(\x_k,\uu_k,\z_k,\la_k,\m_k)]  \notag \\
& = \Pi_{\mathbb{R}_+^m} [\uu_{k} - \tau (\la_{k} + \rho (g(\x_{k+1}) + \uu_{k})]. \notag
\end{aligned}
\end{equation}
Note that we can construct an upper bound $\max_{k\ge1}\{\uu_k\}\le B_g$ from~\eqref{eq:bound_jacobian_gx}, since we have $\|g(\x) \| \leq B_g$ for all feasible solutions $\x$.

The {\em auxiliary} multiplier $\m$ is updated as \eqref{eq:mu_update_ppala} with a diminishing sequence $\delta_k$ satisfying the conditions \eqref{eq:delta_condition}. In particular, we employ the form:  
\begin{equation} \label{eq:delta_k}
\delta_k = \frac{1}{p \cdot k^{q}+1}, \quad \ \frac{2}{3} < q \leq 1, \quad \ p > 0.
\end{equation}

Note that several alternatives are available for the sequence $\{\delta_k\}$ satisfying the conditions in \eqref{eq:delta_condition}. Two popular alternative step sizes are: (i) $\delta_k = \frac{\delta_0}{(k + 1)^q}$, where $\delta_0 > 0$ and $0 < q \leq 1$, and (ii) $\delta_k = \frac{\delta_{k-1}}{1 - b\delta_{k-1}}$, where $\delta_0 \in (0, 1]$ and $b \in (0, 1)$; see e.g., \cite{bertsekas1999nonlinear,scutari2014decomposition} for more possibilities for $\{\delta_k\}$. As we will see in Lemma~\ref{lem_stationarity_ppala} and Corollary~\ref{cor1}, a benefit of~\eqref{eq:delta_k} and choosing $q \in (2/3, 1]$ is that it allows our algorithm to achieve improved complexity bounds compared to $\mathcal{O}(1/\epsilon^3)$ found in existing works. 

Then the multipliers $\la$ and $\z$ are updated in the same manner as Algorithm \ref{alg:plada}.

\subsection{Convergence Guarantees}

In this section, we establish the convergence results of Algorithm \ref{alg:ppala}. We prove that the sequence generated by Algorithm \ref{alg:ppala} converges to a KKT point of problem \eqref{eq:op} as defined in \eqref{eq:def_kkt}. The analysis extends to demonstrating the algorithm's non-asymptotic rate of convergence in ergodic sense. Please find the proofs of each Lemma in Sumpplementary Materials.

\begin{lemma} [Primal Stationarity] \label{lem_stationarity_ppala}
Let $\left\{\ww_{k} \right\}$ be the sequence generated by Algorithm \ref{alg:ppala}, and let $\{\pp_k := (\x_k,\uu_{k},\z_{k})\}$ be the generated primal sequence. Under Assumptions \ref{assumption_kkt}-\ref{assumption_bounded_domain}, the running average of the squared primal stationarity residual converges to zero:
\begin{equation}
    \lim_{T \to \infty} \frac{1}{T}\sum_{k=0}^{T-1} \| \boldsymbol{\zeta}_{\pp}^{k+1}\|^2 =0, \ \ \text{with the rate of} \ \begin{cases} 
    \mathcal{O}\left(\frac{\log(T)}{T}\right) = \widetilde{\mathcal{O}}\left(\frac{1}{T}\right) & \text{if } \ q = 1, \\
    \mathcal{O}\left(\frac{1}{T^{q}}\right) & \text{if } \ 2/3 < q <1, 
    \end{cases}, \label{eq:primal_converge_rate}
\end{equation}
where $\boldsymbol{\zeta}_{\pp}^{k+1} \in \partial_{\pp} \mathcal{L}_{\rho} (\ww_{k+1})$ and $\delta_k = \frac{1}{p \cdot k^q + 1}$. Hence, $\0 \in \nabla f(\bar{\x}) + \partial r(\bar{\x}) + \partial g(\bar{\x})^{\top}\bar{\la}$.
\end{lemma}

Thus, a consequence of Lemma \ref{lem_stationarity_ppala} is that $q=1$ gives the fastest primal convergence rate of Algorithm \ref{alg:plada}.

\begin{corollary} \label{cor1}
Consider the sequence $\{\delta_k\}$ with the best choice of $q=1$ in terms of the primal convergence rate of Algorithm \ref{alg:ppala}, i.e., $\delta_k = \frac{1}{p \cdot k + 1}$. For a given tolerance $\epsilon>0$, the number of
iterations required to reach $\epsilon$-primal stationarity,
$\frac{1}{T}\sum_{k=0}^{T-1}\| \boldsymbol{\zeta}_{\pp}^{k+1} \| \leq \epsilon,$
is upper bounded by  $\widetilde{\mathcal{O}}\left({1}/{\epsilon^2}\right).$ 
\end{corollary}

Note that even with the choice of $2/3 < q < 1$ for the sequence $\{\delta_k\}$, we can derive the complexity bound of ${\mathcal{O}}\left({1}/{\epsilon^{2/q}}\right)$ through a similar analysis. This is still an improved complexity bound compared to the best-known complexity of $\mathcal{O}\left(1/\epsilon^3\right)$.

\begin{remark}
As an immediate consequence of results in Lemma \ref{lem_ppal_decrease_converge} and Lemma \ref{lem_stationarity_ppala}, we also have the result:
$\lim_{T \rightarrow \infty} \frac{1}{T}\sum_{k=0}^{T} \left(\| \x_{k+1} - \x_{k} \|^2 +\| \uu_{k+1} - \uu_{k} \|^2  \right)= 0.$ 
This result implies the following rates of the squared running-average successive differences of primal iterates:
\begin{align}
\quad \frac{1}{T}\sum_{k=0}^{T-1}\left(\| \x_{k+1} - \x_{k} \|^2 + \| \uu_{k+1} - \uu_{k} \|^2 \right) = \begin{cases} 
\mathcal{O}\left(\frac{\log(T)}{T}\right) 
 = \widetilde{\mathcal{O}}\left(\frac{1}{T}\right) & \text{if } \ q = 1, \\
\mathcal{O}\left(\frac{1}{T^{q}}\right) & \text{if } \ \frac{2}{3} < q <1, 
\end{cases} \notag
\end{align}
\end{remark}

It remains to prove that $\lim_{k \rightarrow \infty} \|\la_{k} - \m_{k} \|=0$ to show the feasibility guarantees of our algorithm, which will complete our arguement of obtaining an improved iteration complexity among algorithms solving problem \eqref{eq:op}.  This can be easily achieved by the structural properties of Algorithm \ref{alg:ppala}.

\begin{lemma} [Primal Feasibility] \label{lem_feasibility_ppala}
Let $\left\{\ww_{k} \right\}$ be the sequence generated by Algorithm \ref{alg:ppala}. Under Assumptions \ref{assumption_kkt}-\ref{assumption_bounded_multiplier}, the gap between the dual variables vanishes:
$$ \underset{k\rightarrow \infty}{\lim} \| \la_{k} - \m_{k} \| = 0.$$
Consequently, any limit point $\bar{\x}$ of the sequence $\{\x_k\}$ is feasible for problem \eqref{eq:op}, satisfying $ g(\overline{\x}) \leq \0$. Moreover, defining $\boldsymbol{\zeta}_{{\bf d}}^{k+1} :=(\zeta_{\la}^{k+1}, \zeta_{\m}^{k+1}) = (\0, \frac{1}{\rho}(\la_{k+1} - \m_{k+1})) \in \nabla_{{\bf d}} \mathcal{L}_{\rho} (\ww_{k+1})$, we have the running-average feasibility residual: 
\begin{align}
&  \frac{1}{T}\sum_{k=0}^{T-1}\| \boldsymbol{\zeta}_{{\bf d}}^{k+1} \|^2 = \mathcal{O}\left(\frac{\log(T)}{T}\right) =\widetilde{\mathcal{O}}\left(\frac{1}{T}\right). \label{eq:dual_rate}
\end{align}
\end{lemma}

\begin{lemma} [Dual feasibility] \label{lem_dual_feasibility_ppala}
Let $\bar{\la}$ be a limit point of the sequence $\{\la_k\}$ generated by Algorithm \ref{alg:ppala}. Under Assumptions \ref{assumption_kkt}-\ref{assumption_bounded_multiplier},  $\bar{\la}$ is feasible for the dual problem of \eqref{eq:op}, satisfying $\bar{\la}\ge\0$.
\end{lemma}

\begin{lemma} [Complementary Slackness] \label{lem_complementary_slackness_ppala}
Let $(\bar{\x},\bar{\la})$ be a limit point of the sequence $\{(\x_k,\la_k)\}$ generated by Algorithm \ref{alg:ppala}. Then, $(\bar{\x},\bar{\la})$ satisfies the complementary slackness for problem of \eqref{eq:op}, i.e., $\bar{\la}^{\top}g(\bar{\x}) = 0$.
\end{lemma}

\begin{theorem} [Convergence to a KKT Point] \label{thm_kkt_ppala}
Let $\{\mathbf{w}_k = (\x_k, \uu_k, \z_k, \la_k, \m_k)\}$ be the sequence generated by Algorithm \ref{alg:ppala}. Under Assumptions \ref{assumption_kkt}-\ref{assumption_bounded_multiplier}, any limit point $\bar{\mathbf{w}}$ of the sequence $\{\mathbf{w}_k\}$ corresponds to a KKT point of the original problem \eqref{eq:op} as defined in Definition \ref{def_kkt}.
\end{theorem}

\begin{proof}
By Lemmas \ref{lem_stationarity_ppala}, \ref{lem_feasibility_ppala}, \ref{lem_dual_feasibility_ppala} and \ref{lem_complementary_slackness_ppala}, $\bar{\mathbf{w}}$ satisfies the KKT conditions as defined in Definition \ref{def_kkt}.
\end{proof}

Notably, this eliminates the need for strong regularity assumptions, which is often imposed by several AL-based algorithms \cite{li2021rate,lin2022complexity,lu2022single,sahin2019inexact} to ensure feasibility. For Algorithm \ref{alg:ppala}, we construct a non-negative auxiliary sequence as
\begin{equation}
    \boldsymbol{\nu}_k := \la_k + \la_{k+1}-\m_{k+1} + \left(\frac{1}{\tau}-\rho\right)(\uu_{k+1}-\uu_k). \label{eq:nu_ppala}
\end{equation}
Note that $\boldsymbol{\nu}_k\ge\0$ for all $k\ge0$. By the first order optimality of $\uu_{k+1}$ for \eqref{eq:u_update_ppala},
\begin{equation}
    \uu_k - \tau(\la_k+\rho (g(\x_{k+1})+\uu_k)) \le \uu_{k+1}. \notag
\end{equation}
And by the lambda update \eqref{eq:lambda_update},
\begin{gather}
    \0 \le \uu_{k+1} -\uu_k + \tau(\la_k+ \la_{k+1}-\m_{k+1}-\rho\uu_{k+1} +\rho\uu_k) \notag \\
    \0 \le (\uu_{k+1} -\uu_k)(1-\tau\rho) + \tau(\la_k+ \la_{k+1}-\m_{k+1}) \notag \\
    \0 \le \left(\frac{1}{\tau}-\rho\right)(\uu_{k+1} -\uu_k) + \la_k+ \la_{k+1}-\m_{k+1} = \boldsymbol{\nu}_k. \notag
\end{gather}

\begin{lemma} [Non-asymptotic rate for primal stationarity] \label{lem_rate_primal_stationarity_ppala}
Let $\{\pp_k := (\x_k,\uu_{k},\z_{k})\}$ be the primal sequence generated by Algorithm \ref{alg:ppala} using the non-negative multiplier $\bar{\boldsymbol{\nu}}_T$. Under Assumptions \ref{assumption_kkt}-\ref{assumption_bounded_domain}, average primal stationarity residual converges as
\begin{equation}
    \frac{1}{T}\sum_{k=0}^{T-1}\| \boldsymbol{\zeta}_{\pp}^{k+1} \|^2 = \tilde{\mathcal{O}}\left(\frac{1}{T}\right), \notag 
\end{equation}
where $\boldsymbol{\zeta}_{\pp}^{k+1} :=(\boldsymbol{\zeta}_{\x}^{k+1},\boldsymbol{\zeta}_{\uu}^{k+1},\boldsymbol{\zeta}_{\z}^{k+1}) \in \partial_{\pp} \mathcal{L}_{\alpha\beta} (\ww_{k+1})$.
\end{lemma}

\begin{lemma} [Non-asymptotic rate for primal feasibility] \label{lem_rate_primal_feasibility_ppala}
Let $\{\x_k\}$ be the primal sequence generated by Algorithm \ref{alg:ppala}. Under Assumptions \ref{assumption_kkt}-\ref{assumption_bounded_multiplier}, average primal feasibility violation converges as
\begin{equation}
\frac{1}{T}\sum_{k=0}^{T-1} \|[g(\x_{k+1})]^{+}\|^2 =  \tilde{\mathcal{O}}\left(\frac{1}{T}\right). \label{eq:primal_feasibility_rate_ppala}
\end{equation}
\end{lemma}

\begin{lemma} [Non-asymptotic rate for complementary slackness] \label{lem_rate_complementary_slackness_ppala}
Let $\{\x_k, \boldsymbol{\nu}_k\}$ be the sequence generated by Algorithm \ref{alg:ppala}. Under Assumptions \ref{assumption_kkt}-\ref{assumption_bounded_multiplier}, average complementary slackness for Definition \ref{def_epsilon-kkt} converges as
\begin{equation}
    \frac{1}{T}\sum_{k=0}^{T-1} \sum_{j=1}^m |\nu_{j,k} g_j(\x_{k+1})| =\tilde{\mathcal{O}}(1/\sqrt{T}).
\end{equation}
\end{lemma}

\begin{theorem} [Non-asymptotic Rate of Convergence] \label{thm_rate_ppala}
Under Assumptions \ref{assumption_kkt}-\ref{assumption_bounded_multiplier}, there exists a uniformly-at-random iterate $k\in\{0,\cdots,K-1\}$ from the sequence generated by Algorithm \ref{alg:ppala} that is a $\epsilon$-KKT solution to problem \ref{eq:op} on expectation as defined in Definition \ref{def_epsilon-kkt}. The total number of iterations required to achieve this is bounded by $\tilde{\mathcal{O}}(1/\epsilon^2)$.
\end{theorem}

\begin{proof}
The proof is analogous to that of Theorem \ref{thm_rate_plada} using Lemmas \ref{lem_rate_primal_stationarity_ppala}, \ref{lem_rate_primal_feasibility_ppala} and \ref{lem_rate_complementary_slackness_ppala} and the construction of the non-negative multiplier sequence \eqref{eq:nu_ppala}.
\end{proof}

\section{Numerical Experiments}\label{sec:experiments}
This section presents a comprehensive set of numerical experiments designed to validate the theoretical results and demonstrate the practical advantages of our proposed algorithms. We evaluate our framework on a range of non-convex optimization problems, including those with non-smooth constraints and smooth, highly non-convex constraints. Our goals are twofold: (1) to empirically verify the convergence properties and improved efficiency of our methods, and (2) to benchmark their performance against existing state-of-the-art algorithms. The results confirm the robustness and superior performance of our approach, particularly in large-scale and complex settings. Experimental details including implementation details, datasets and setups are provided in Supplemenary Materials.

\subsection{Classification Problems Under Non-smooth Fairness Constraints}
We first evaluate our proposed algorithm \ref{alg:plada} on real-world datasets with non-convex non-smooth fairness constraints. In Supplementary Materials, we also provide experiments on hyperparameter robustness, dual variables convergence and extension to highly stochastic setting.

\subsubsection{Demographic Parity Constraint} \label{sec:exp_dp}

\begin{figure*}[htbp]
\hspace*{88pt}\makebox[10pt]{Adult}%
\hspace*{130pt}\makebox[10pt]{Bank}%
\hspace*{135pt}\makebox[10pt]{COMPAS}%
\\\makebox[20pt]{\raisebox{60pt}{\rotatebox[origin=c]{90}{Loss}}}%
\subfigure{\includegraphics[width=0.305\textwidth]{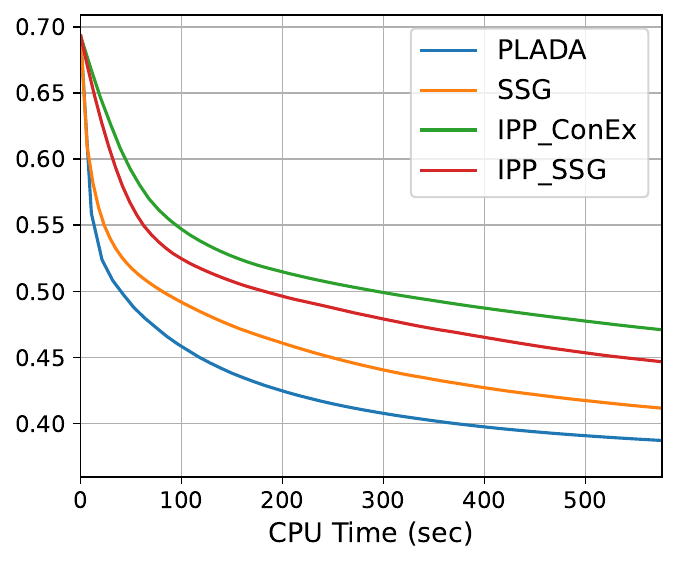} \label{fig:log_dp_a9a_obj}}
\hfill
\subfigure{\includegraphics[width=0.306\textwidth]{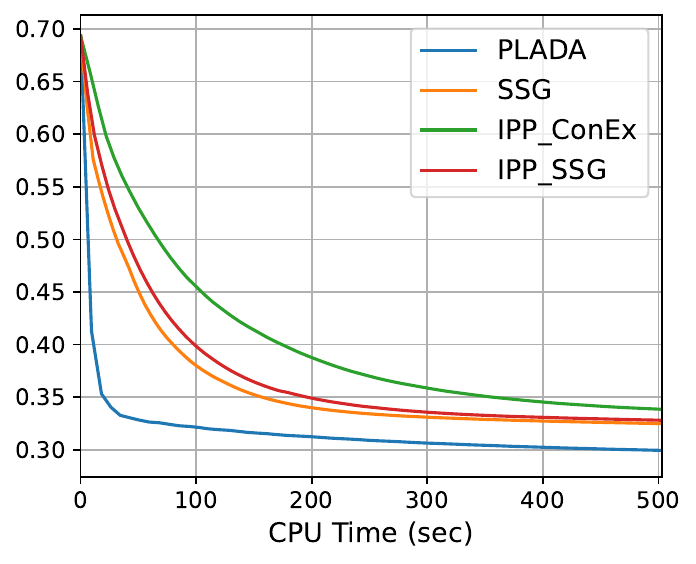} \label{fig:log_dp_bank_obj}}
\hfill
\subfigure{\includegraphics[width=0.303\textwidth]{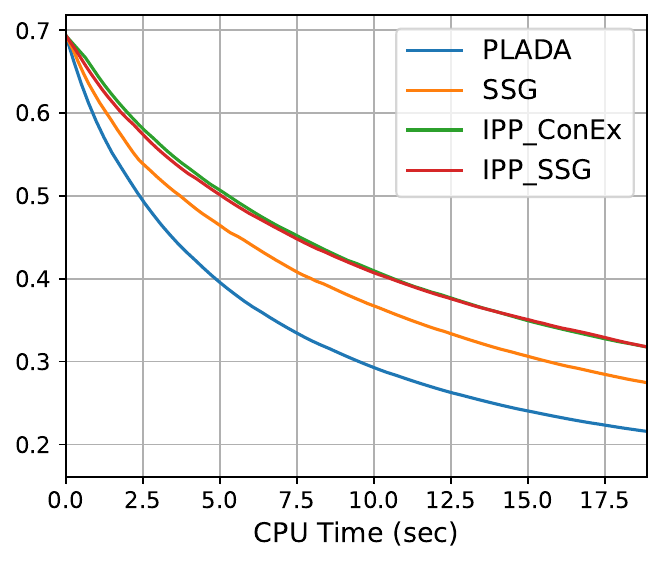} \label{fig:log_dp_compas_obj}}
\makebox[20pt]{\raisebox{55pt}{\rotatebox[origin=c]{90}{DP Violation}}}%
\subfigure{\includegraphics[width=0.303\textwidth]{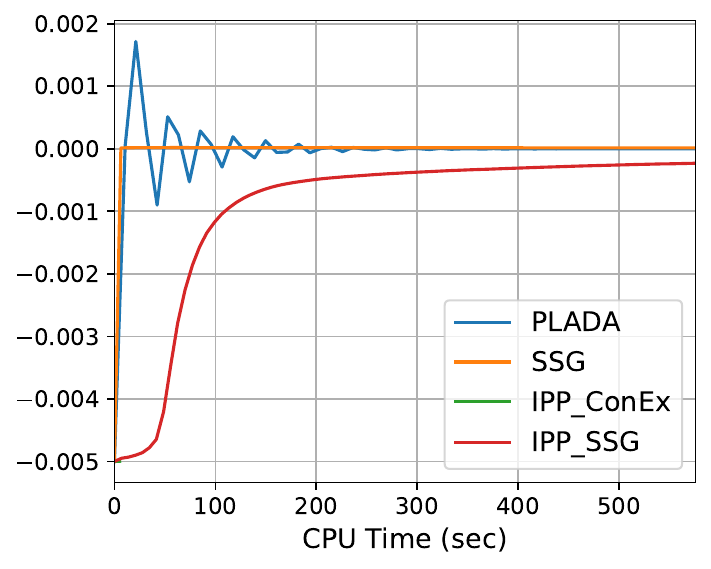} \label{fig:log_dp_a9a_cons}}
\hfill
\subfigure{\includegraphics[width=0.306\textwidth]{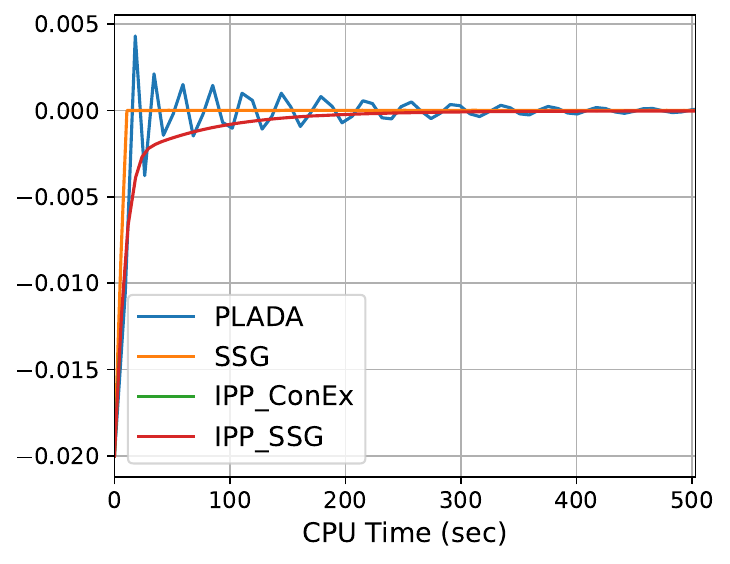} \label{fig:log_dp_bank_cons}}
\hfill
\subfigure{\includegraphics[width=0.303\textwidth]{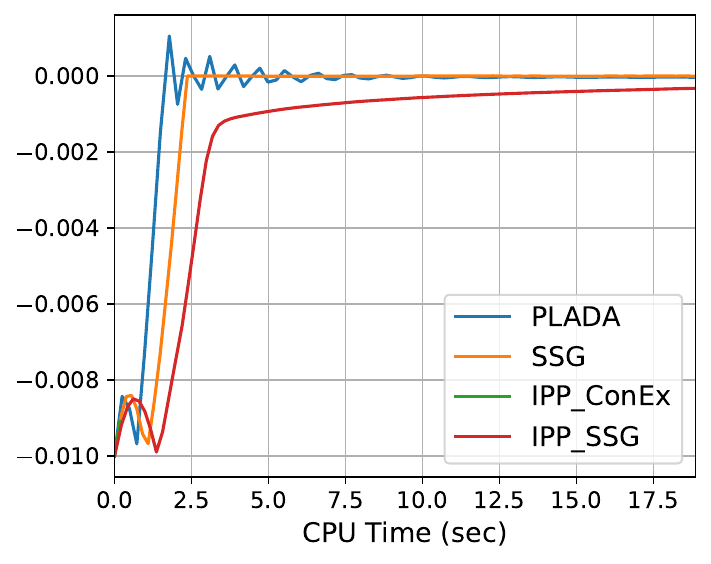} \label{fig:log_dp_compas_cons}}
\makebox[20pt]{\raisebox{60pt}{\rotatebox[origin=c]{90}{Near Stationarity}}}%
\subfigure{\includegraphics[width=0.304\textwidth]{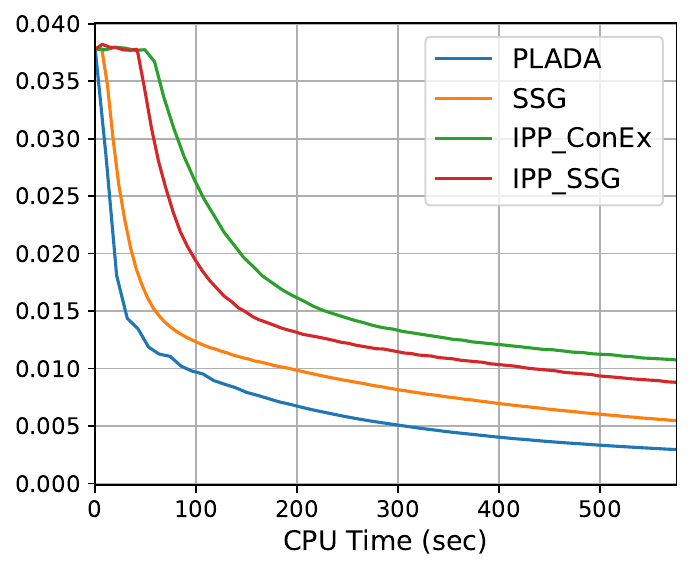} \label{fig:log_dp_a9a_stat}}
\hfill
\subfigure{\includegraphics[width=0.305\textwidth]{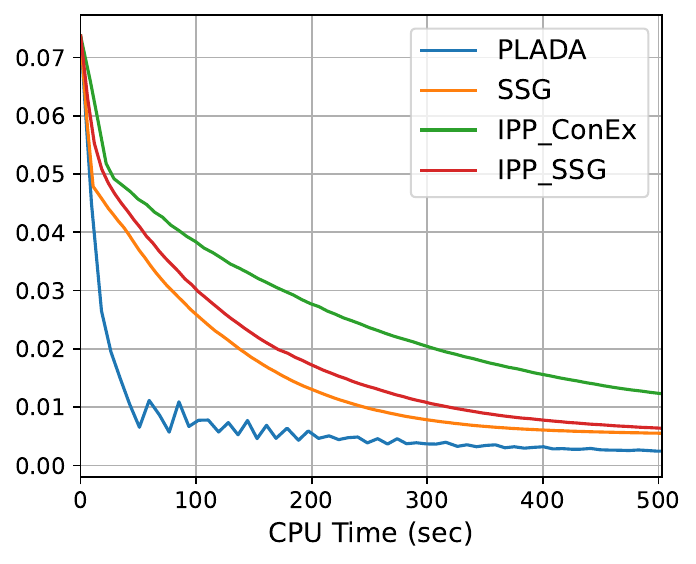} \label{fig:log_dp_bank_stat}}
\hfill
\subfigure{\includegraphics[width=0.304\textwidth]{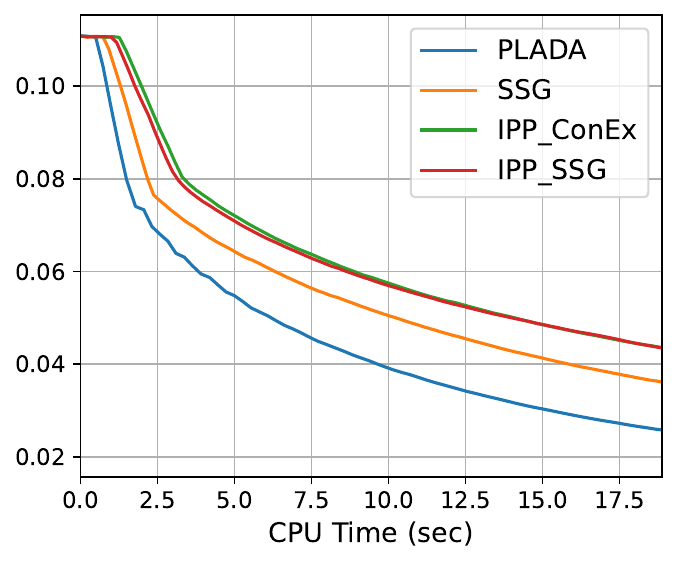} \label{fig:log_dp_compas_stat}}
\caption{Comparison of the performance of PLADA, IPP-ConEx, IPP-SSG and SSG on the logistic loss  \eqref{eq:logistic_loss} with demographic parity (DP) constraint  \eqref{eq:demographic_parity}. The results are presented in terms of their loss values, constraint violation and near stationarity (from top to bottom) on Adult, Bank and COMPAS datasets (from left to right) with respect to CPU time in seconds.}
\label{fig:log_dp}
\end{figure*}

Our first experiment addresses the problem of minimizing the logistic empirical loss:
\begin{equation}
    f(\x) = \frac{1}{N}\sum_{i=1}^N\log(1+e^{-y_i\x^\top {x}_i}), \label{eq:logistic_loss}
\end{equation}
subject to a demographic parity (DP) constraint:
\begin{equation} \label{eq:demographic_parity}
\widehat{\Delta}_D(\x) = \left|\frac{1}{N_p}\sum_{i\in I_p}\sigma(\x^\top {x}_i) - \frac{1}{N_u}\sum_{i\in I_u}\sigma(\x^\top {x}_i)\right|,
\end{equation}
which measures the absolute difference in the positive prediction rates between protected ($I_p$) and unprotected ($I_u$) groups with corresponding sizes of $N_p=|I_p|$ and $N_u=|I_u|$. Equation \eqref{eq:demographic_parity} uses sigmoid $\sigma(\cdot)$ as a surrogate. This results in smooth and convex objective and a weakly convex and non-smooth constraint. Figure \ref{fig:log_dp} depicts the performance of all algorithms across three datasets. The result show that PLADA consistently converges faster with lower loss and smaller constraint violation compared to the benchmark methods. 

\subsubsection{Equalized Odds Constraints} \label{sec:exp_eo}

\begin{figure*}[hbt]
\hspace*{88pt}\makebox[10pt]{Adult}%
\hspace*{130pt}\makebox[10pt]{Bank}%
\hspace*{135pt}\makebox[10pt]{COMPAS}%
\\\makebox[20pt]{\raisebox{60pt}{\rotatebox[origin=c]{90}{Loss}}}%
\subfigure{\includegraphics[width=0.305\textwidth]{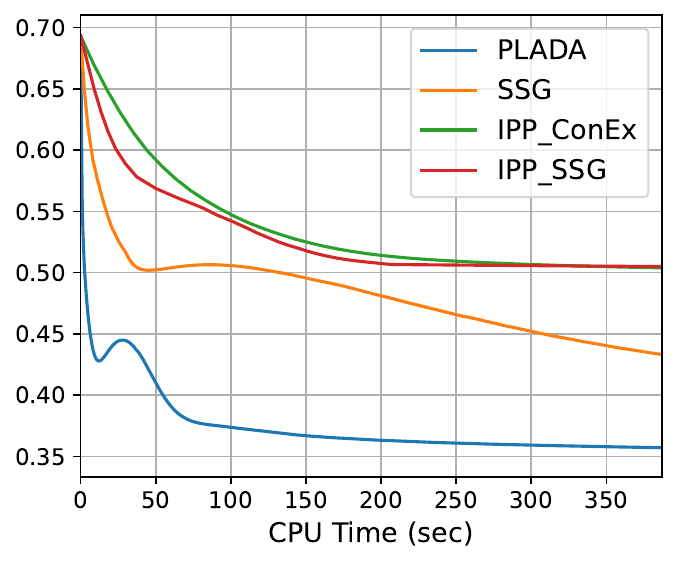} \label{fig:log_eqod_a9a_obj}}
\hfill
\subfigure{\includegraphics[width=0.305\textwidth]{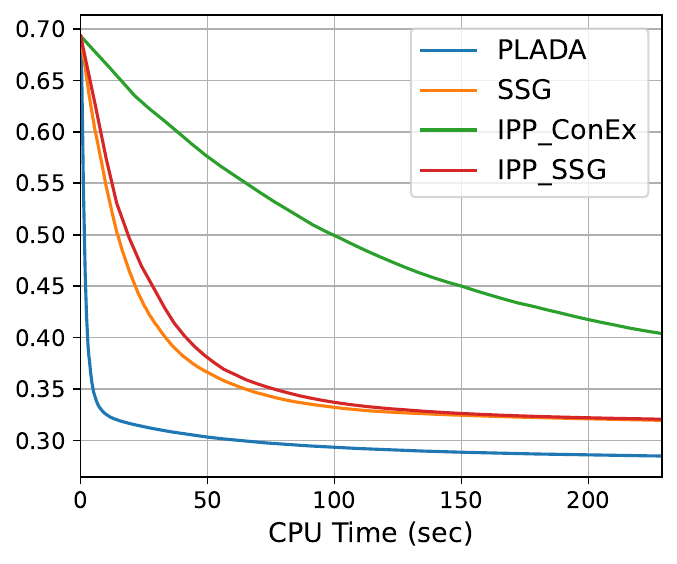} \label{fig:log_eqod_bank_obj}}
\hfill
\subfigure{\includegraphics[width=0.304\textwidth]{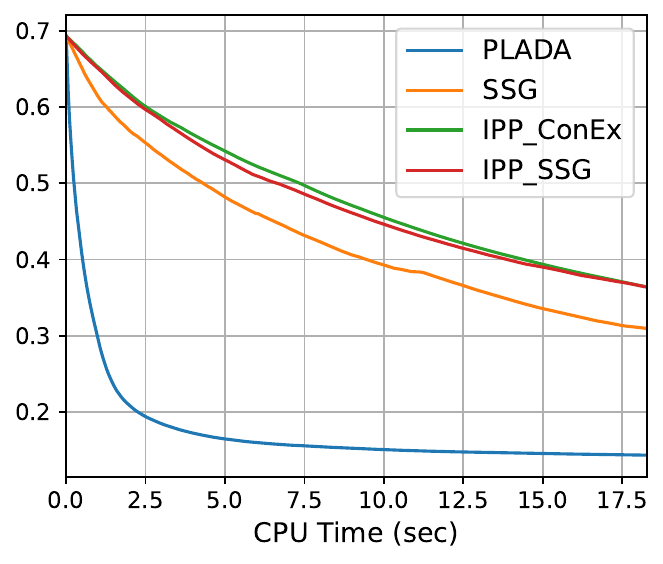} \label{fig:log_eqod_compas_obj}}
\vspace{-1mm}
\makebox[20pt]{\raisebox{55pt}{\rotatebox[origin=c]{90}{EO Violation}}}%
\subfigure{\includegraphics[width=0.304\textwidth]{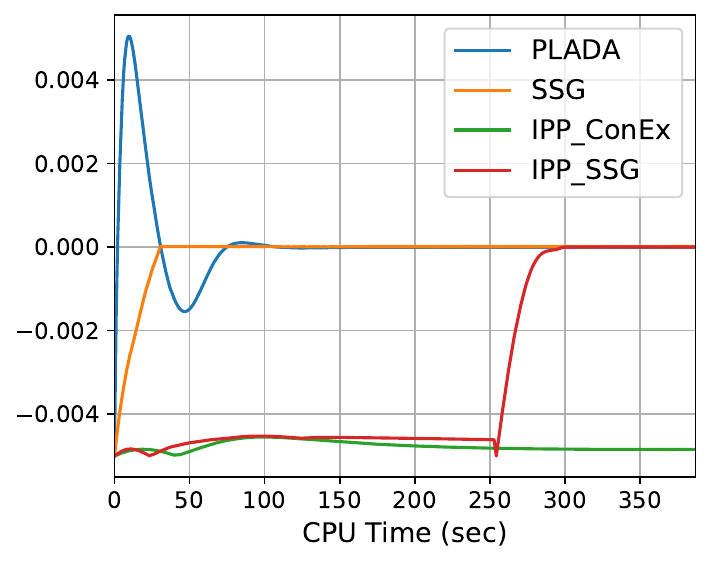} \label{fig:log_eqod_a9a_cons}}
\hfill
\subfigure{\includegraphics[width=0.305\textwidth]{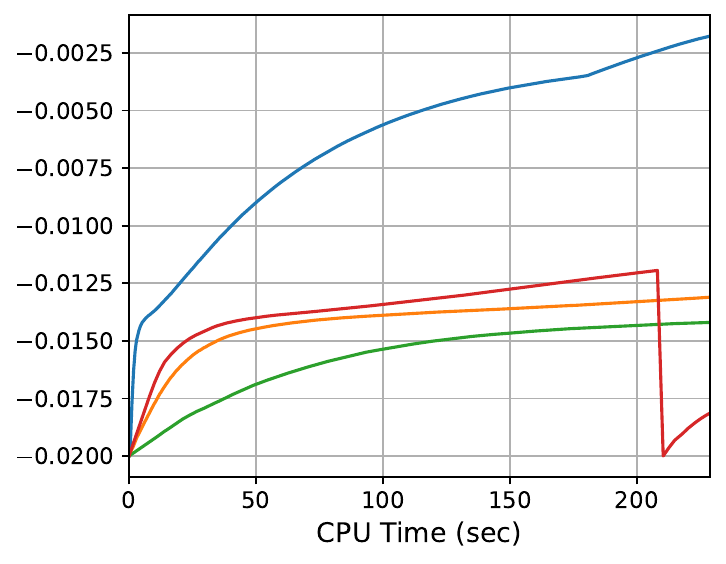} \label{fig:log_eqod_bank_cons}}
\hfill
\subfigure{\includegraphics[width=0.305\textwidth]{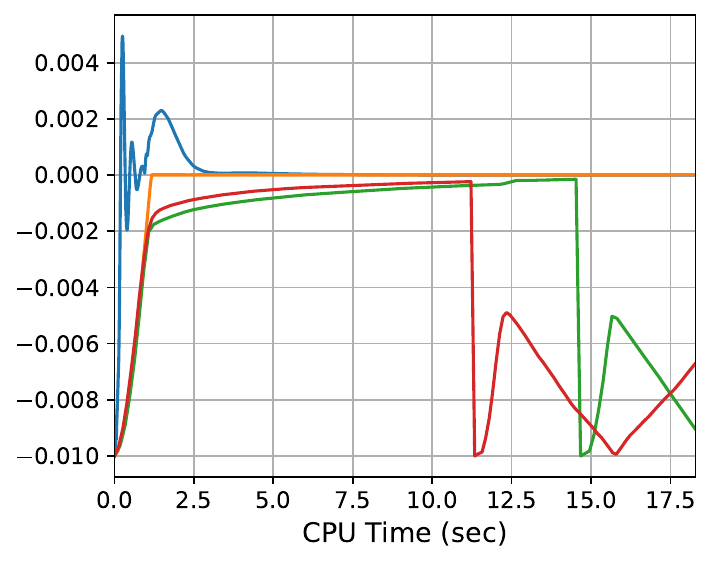} \label{fig:log_eqod_compas_cons}}
\vspace{-1mm}
\makebox[20pt]{\raisebox{60pt}{\rotatebox[origin=c]{90}{Near Stationarity}}}%
\subfigure{\includegraphics[width=0.305\textwidth]{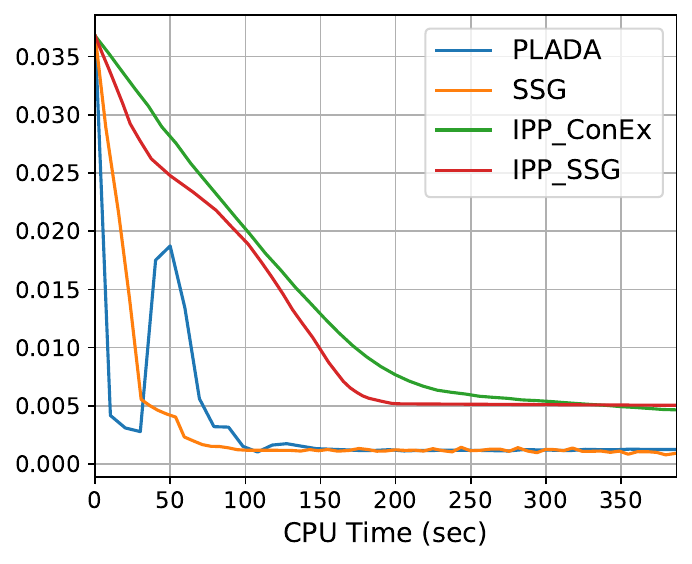} \label{fig:log_eqod_a9a_stat}}
\hfill
\subfigure{\includegraphics[width=0.305\textwidth]{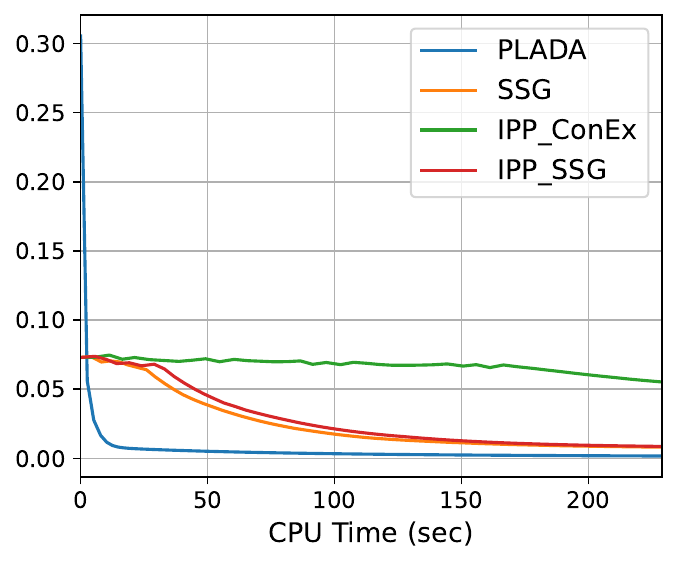} \label{fig:log_eqod_bank_stat}}
\hfill
\subfigure{\includegraphics[width=0.304\textwidth]{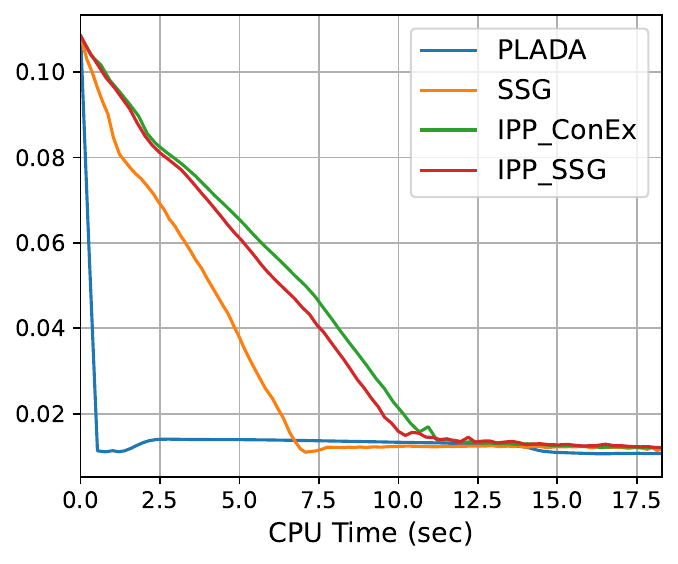} \label{fig:log_eqod_compas_stat}}
\vspace{-2mm}
\caption{Comparison of the performance of PLADA, IPP-ConEx, IPP-SSG and SSG on the logistic loss objective (\ref{eq:logistic_loss}) and the equalized odds (EO) constraint (\ref{eq:equalized_odds}) with respect to CPU time.}
\label{fig:log_eqod}
\end{figure*}

Next, we consider a stricter and more challenging fairness notion: equalized odds (EO). EO constraint \eqref{eq:equalized_odds} requires that the true positive rates and false positive rates are equal across protected and unprotected groups. This results in two separate constraints, which we formulate using a max operator for the benchmark algorithms that only support a single constraint:
\begin{equation}
\begin{aligned}
    \widehat{\Delta}_E(\x) = \max\Biggl(&\left|\frac{1}{N_{pq}}\sum_{i\in I_{pq}}\sigma(\x^\top {x}_i) - \frac{1}{N_{uq}}\sum_{i\in I_{uq}}\sigma(\x^\top {x}_i)\right|, \\
    &\left|\frac{1}{N_{pu}}\sum_{i\in I_{pu}}\sigma(\x^\top {x}_i) - \frac{1}{N_{uu}}\sum_{i\in I_{uu}}\sigma(\x^\top {x}_i)\right|\Biggr). \label{eq:equalized_odds}
\end{aligned}
\end{equation}
A notable advantage of PLADA is its ability to handle multiple constraints by alternatingly optimizing parameters $(\uu,\z,\la, \m)$. Figure \ref{fig:log_eqod} shows that PLADA's advantage is even more pronounced in this more challenging setting.

\subsubsection{Intersectional Group Fairness on Neural Networks} \label{sec:exp_nn_if}

\begin{figure*}[t]
\centering
\subfigure[Error rate]{\includegraphics[width=0.31\textwidth]{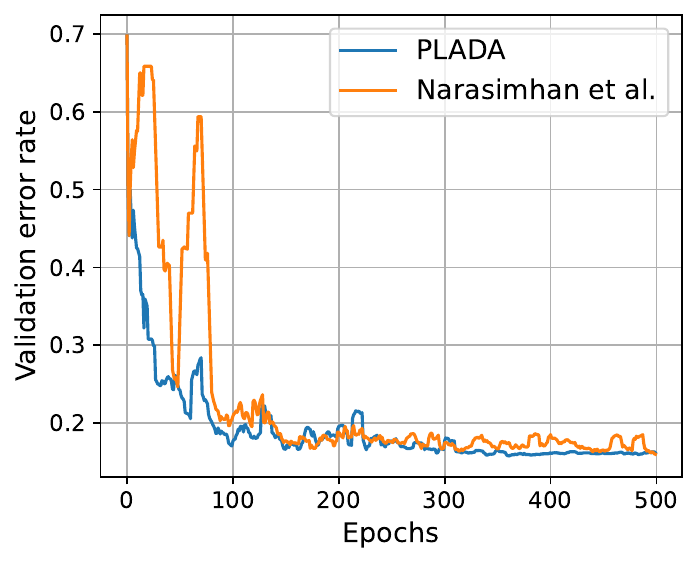} \label{fig:IF_vali_obj}}
\hfill
\subfigure[Average fairness violation  over intersectional groups]{\includegraphics[width=0.325\textwidth]{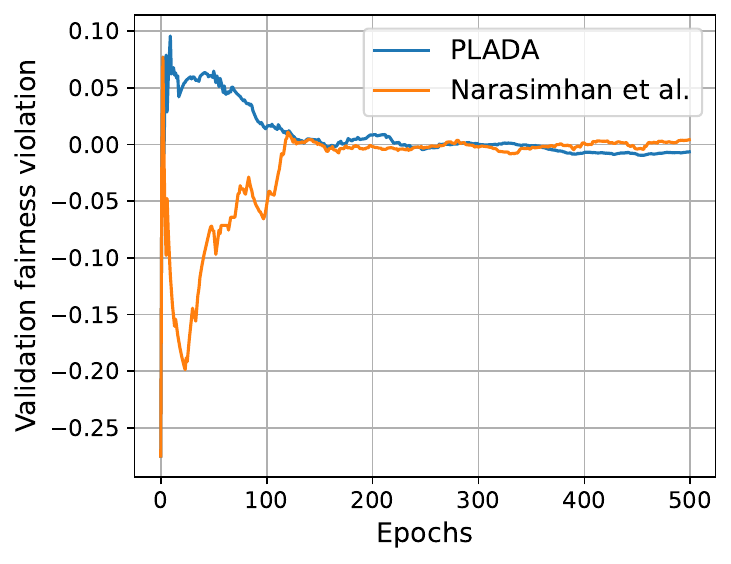} \label{fig:IF_vali_mean_viol}}
\hfill
\subfigure[Maximum fairness violation over intersectional groups]{\includegraphics[width=0.31\textwidth]{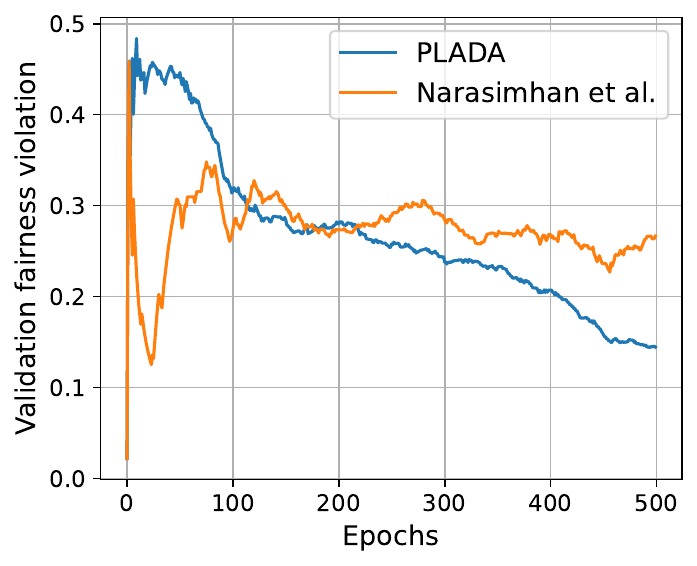} \label{fig:IF_vali_viol}}
\vspace{-2mm}
\caption{Comparison of the validation performance of PLADA and Narasimhan et al., \cite{narasimhan2020approximate} on the intersectional group fairness \eqref{eq:intersectional_fairness} versus Epochs.}
\label{fig:nn_if}
\end{figure*}

The fairness constraint \eqref{eq:intersectional_fairness} is particularly demanding, requiring parity across intersectional groups defined on the Communities and Crime dataset \cite{redmond2009communities}. In particular, the groups are created with ten thresholds on three criteria: the percentages of the Black, Hispanic and Asian populations. Among $10^3$ groups, 535 groups with memberships of more than 1\% of data points are selected. The constraint is formulated as an expectation over the fairness violations for each group:
\begin{equation} \label{eq:intersectional_fairness}
\widehat{\Delta}_I(\x) = \mathbb{E}_G\left[\frac{1}{N_G}\sum_{i\in I_G}\left[1-y_if_\x({x}_i))\right]^+ - \frac{1}{N}\sum_{i=1}^N\left[1-y_if_\x({x}_i)\right]^+\right],
\end{equation}
where $f_\x(\cdot)$ is the neural network classifier, $G$ is a uniformly sampled group, and $[\cdot]^+$ represents a hinge function.

We compare PLADA with the method of \cite{narasimhan2020approximate}, which employs a separate deep neural network to update the Lagrange multipliers. In contrast, PLADA uses a simple, direct update scheme that guarantees the boundedness of the Lagrange multiplier sequence, leading to consistent fairness satisfaction. As shown in Figure \ref{fig:nn_if}, PLADA achieves a lower validation error rate while more effectively reducing fairness violations.

\subsection{Non-convex Multi-class Neyman-Pearson Classification}
This section evaluates the performance of PPALA on a highly non-convex multi-class Neyman-Pearson classification (mNPC) problem using neural networks.

\textbf{Task formulation.} 
The mNPC problem, which aims to minimize the loss for a particular class of interest while ensuring the losses for others remain below given thresholds, is formulated as:
\begin{equation}
\begin{aligned}
\underset{\| \x \| \leq \theta}{\mathrm{min}} & \ \ \frac{1}{\lvert \mathcal{D}_1\rvert} \sum_{j \neq 1}\sum_{ \xi \in \mathcal{D}_1}\phi(f_1(\x_1;\xi)-f_j(\x_j;\xi)) \\
\mathrm{s.\: t.} &  \ \ \frac{1}{\lvert \mathcal{D}_i \rvert}\sum_{j \neq i}\sum_{\xi \in \mathcal{D}_i}\phi(f_i(\x_i; \xi) - f_j(\x_j; \xi )) \leq \kappa_i, \qquad i = 2, \ldots, N, \notag 
\end{aligned}
\end{equation}
where $f_i$ with weights $\x_i$ is a nonlinear classifier for class $i$, $\mathcal{D}_i$ is the corresponding class data, and $\phi$ is a loss function.
\begin{figure}[t!]
\centering
%\begin{multirows}
\includegraphics[scale=0.28]{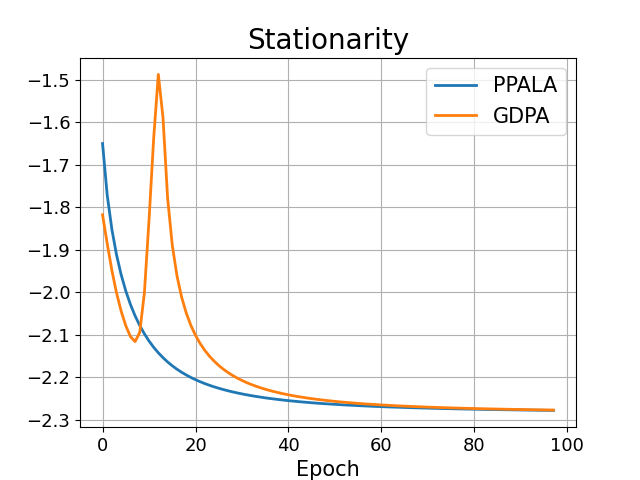}\hspace{-0.15in}
\includegraphics[scale=0.28]{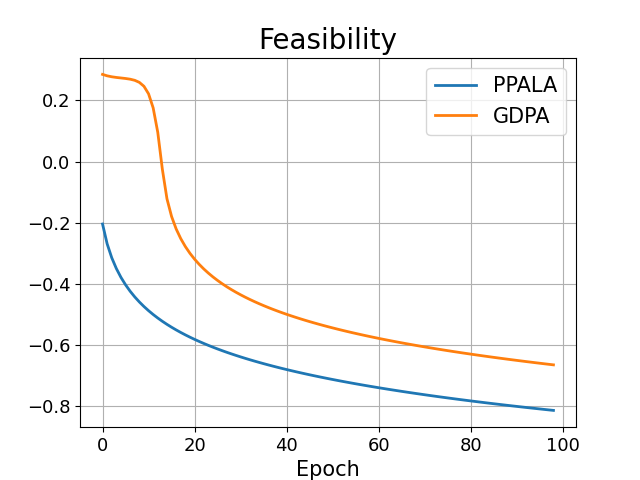}

{{\small (a) Fashion-MNIST}}

\includegraphics[scale=0.28]{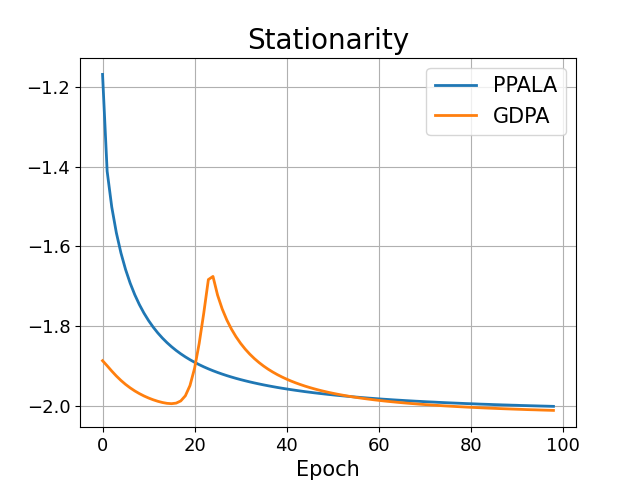}\hspace{-0.15in}
\includegraphics[scale=0.28]{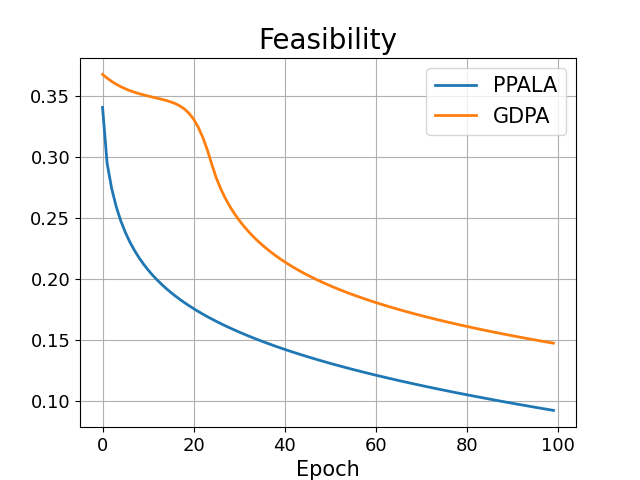}

{{\small (b) CIFAR10}}
%\end{multirows}
\caption{Performance comparison of PPALA and GDPA on Fashion-MNIST and CIFAR10 datasets in terms of obtaining stationarity and feasibility. We see that PPALA provides a consistent reduction of stationarity and feasibility gaps that align with our theoretical expectations. In contrast, GDPA reduces the feasibility gap at a slower rate on Fashion-MNIST and CIFAR10 in our neural network setting.}
  \label{fig:results_mnpc}
\end{figure}

\textbf{Results and discussion.} Figure \ref{fig:results_mnpc} shows that PPALA converges faster compared to GDPA. In particular, The performance advantage is particularly pronounced on the more complex CIFAR-10 dataset. PPALA's robust performance is attributable to its fixed penalty mechanism, which required minimal tuning of parameters across both datasets. In contrast, GDPA exhibited high sensitivity to its penalty parameter update schedule. For instance, we observed that GDPA fails to converge when using large ratio to update the penalty parameter, necessitating careful parameter selection. The gradual update of its penalty parameter hinders GDPA's ability to reduce infeasibility effificiently, while PPALA achieves a fast and consistent reduction in infeasibility with the fixed parameter $\rho=\frac{\alpha}{1+\alpha\beta}$. The results emphasize PPALA's practical advantages in both robustness and computational efficiency when solving more complex problems with highly non-convex constraints.

\section{Conclusions}
In this paper, we have introduced a novel single-loop primal-dual algorithmic framework designed to address non-convex functional constrained optimization problems. A significant contribution of our method is its ability to achieve an improved iteration complexity of $\widetilde{\mathcal{O}}(1/\epsilon^2)$ for computing an $\epsilon$-approximate stationary solution. The proposed algorithmic framework is flexible and robust, capable of effectively handling a variety of non-convex functional constraints. This includes problems with continuously differentiable (smooth) constraints, as well as non-smooth non-convex constraints, which are made tractable through the use of suitable differentiable or sub-differentiable surrogates, particularly relevant in applications like fair classification. Our comprehensive convergence analysis demonstrates that the algorithm ensures a consistent reduction in stationarity and feasibility gaps.

Numerical experiments across diverse non-convex problems, including fairness-constrained classification and multi-class Neyman-Pearson classification (mNPC), consistently demonstrate the algorithm's effectiveness in terms of computational cost and performance. Our algorithm has been shown to outperform related approaches that use regularization techniques and/or standard Lagrangian relaxation, highlighting its superior performance and robustness, especially in large-scale and complex settings.

Future research will explore extending this simple optimization method to stochastic non-convex constrained optimization problems, possibly leveraging variance reduction strategies \cite{cutkosky2019momentum,hashemi2024unified}, which would further broaden its application domain in machine learning and signal processing. Preliminary applications in such highly stochastic settings have already shown promising results in achieving better constraint satisfaction with comparable error rates. Extensions to distributed and federated learning settings is of further interests \cite{kairouz2021advances,das2022faster}. 

\bibliography{references}
\bibliographystyle{ieeetr}

\appendix
\section{Supporting Lemmas for Convergence Analysis of Algorithm \ref{alg:plada}}
The core of our convergence analysis relies on the following technical lemmas, which establish bounds on the iterates and demonstrate the descent properties of the P-Lagrangian. For convenience, we let $\ww_k:=\{(\x_k,\uu_k,\z_k,\la_k,\m_k)\}$ denote the sequence generated by Algorithm \ref{alg:plada}.

\begin{lemma} \label{lem_iterates_relations_plada} 
Let $\{(\x_k,\uu_k,\z_k,\la_k,\m_k)\}$ be the sequence generated by Algorithm \ref{alg:plada}. Then for any $k\ge0$, the following relations hold:
\begin{subequations}
\begin{align}
\| \m_{k+1} - \m_k \|^2 & = ({\sigma_k^2}/{\rho^2}) \|\la_k - \m_k\|^2 \leq {\delta_k^2}/{4}; \label{eq:lem_iter_rel_plada_1} \\
\| \m_{k+1} - \la_k \|^2 & = \left(1 - (\sigma_k/\rho)\right)^2 \| \m_{k} - \la_k  \|^2;  \label{eq:lem_iter_rel_plada_2} \\
\| \la_{k+1}-\la_k \|^2 & \leq 3 \rho^2 M_g^2  \| \x_{k+1} - \x_k \|^2 + 3\rho^2 \| \uu_{k+1} - \uu_k \|^2 + 3(\sigma_k^2/\rho^2) \|\la_k - \m_k\|^2. \label{eq:lem_iter_rel_plada_3} 
\end{align}
\end{subequations}
\end{lemma}

\begin{proof}
From the $\m$-update \eqref{eq:mu_update} and noting that ${a} + {b} \geq 2 \sqrt{{ab}}$ for any ${a,b} \geq 0$, we immediately obtain the relations in \eqref{eq:lem_iter_rel_plada_1}: 
\begin{equation}
\| \m_{k+1} - \m_k \|^2 = \frac{\sigma_k^2}{\rho^2} \| \la_k - \m_k \|^2  \leq \frac{\delta_k^2}{\| \la_k - \m_k \|^2 + 2 + (1/\| \la_k - \m_k \|^2)} \leq \frac{\delta_k^2}{4}. \notag
\end{equation} 
Subtracting $\m_{k+1}$ from $\la_k$ yields
\[ 
\|  \la_k - \m_{k+1} \|= \left\| \la_{k} - \m_{k} -  \frac{\sigma_{k}}{\rho}(\la_{k} - \m_{k}) \right\| = \left(1-\frac{\sigma_{k}}{\rho}\right) \| \la_{k} - \m_{k} \|.
\] 
Squaring both sides of the above inequality yields the relation \eqref{eq:lem_iter_rel_plada_2}. 

By the $\la$-update \eqref{eq:lambda_update}, we have
\begin{equation} \label{eq:lem_iterates_relations_e1}
\begin{aligned}
\| \la_{k+1}- \la_k \| 
& \leq  \|\m_{k+1}-\m_k \| + \rho  \| g(\x_{k+1}) + \uu_{k+1} - g(\x_{k}) - \uu_{k} \| \notag \\
& \leq  \|\m_{k+1}-\m_k \| + \rho M_g \| \x_{k+1} - \x_{k} \| + \rho \| \uu_{k+1} - \uu_{k} \|, 
 \end{aligned}
\end{equation}
which, along with $( a + b + c )^2 \leq 3(a^2 + b^2 + c^2)$ and  \eqref{eq:lem_iter_rel_plada_1}, provides
the relation \eqref{eq:lem_iter_rel_plada_3}. 
\end{proof}

\begin{lemma} [{Approximate Decrease of $\mathcal{L}_{\alpha\beta}$}] \label{lem_one_iter_ppl}
Let $\left\{ \ww_k \right\}$ be the sequence generated by Algorithm \ref{alg:plada}. Under Assumptions \ref{assumption_lipschitz_f}, \ref{assumption_bounded_domain} and \ref{assumption_bounded_subgradient}, the P-Lagrangian $\mathcal{L}_{\alpha\beta}$ \eqref{eq:ppl} satisfies:
\begin{equation}
\mathcal{L}_{\alpha\beta}(\ww_{k+1}) - \mathcal{L}_{\alpha\beta}(\ww_{k}) \leq - C_1 \| \x_{k+1} - \x_k \|^2 - C_2 \| \uu_{k+1} -\uu_{k}\|^2 + \widehat{\delta}_k,
\end{equation}
where $C_1 := \frac{1}{2}\left(\frac{1}{\eta} - {L_f} - 3 \rho M_g^2\right) > 0$, $C_2 := \frac{1}{2}\left( \frac{1}{\tau} -3\rho \right) >0$, and $\widehat{\delta}_k := \frac{\delta_k^2}{2{\rho}} + \frac{\delta_k}{\rho}$.
\end{lemma}

\begin{proof}
First, note that 
\begin{equation}
\begin{aligned} 
   \mathcal{L}_{\alpha\beta}(\x_k,\uu_k,\z_k,\la_k,\m_k)    
   & = f(\x_k)  + \left\langle \la_k, g(\x_k) + \uu_k \right\rangle  - \left\langle \la_k - \m_k,\z_k \right\rangle \\
   & \quad + \frac{\alpha}{2} \| \z_k \|^2 - \frac{\beta}{2} \| \la_k - \m_k \|^2 + r(\x) \\   
   & = f(\x_k)  + \left\langle \la_k, g(\x_k) + \uu_k \right\rangle - \frac{1}{2\rho} \| \la_k - \m_k \|^2 + r(\x) \\   
   &  = \mathcal{L}_{\alpha\beta}(\x_k,\uu_k,\widehat{\z}(\la_k,\m_k),\la_k,\m_k),  \notag
\end{aligned}
\end{equation}
where $\rho = \alpha/(1+\alpha\beta)$, and thus 
\[
\mathcal{L}_{\alpha\beta}(\x_{k+1},\uu_{k+1},\z_k,\la_k,\m_k) = \mathcal{L}_{\alpha\beta}(\x_{k+1},\uu_{k+1},\widehat{\z}(\la_k,\m_k),\la_k,\m_k).
\]

Then the difference of two successive sequences of $ \mathcal{L}_{\alpha\beta}$ can be divided into two parts:
\begin{equation}\label{eq:lem_one_iter_ppl1}
\begin{aligned}
   & \mathcal{L}_{\alpha\beta}(\x_{k+1},\uu_{k+1},\z_{k+1},\la_{k+1},\m_{k+1}) - \mathcal{L}_{\alpha\beta}(\x_k,\uu_k,\z_k,\la_k,\m_k)    \\
   & = \left[  \mathcal{L}_{\alpha\beta}(\x_{k+1},\uu_{k+1},\z_k,\la_k,\m_k) - \mathcal{L}_{\alpha\beta}(\x_k,\uu_k,\z_k,\la_k,\m_k) \right]  \\
   & \quad+ \left[ \mathcal{L}_{\alpha\beta}(\x_{k+1},\uu_{k+1},\widehat{\z}(\la_{k+1},\m_{k+1}),\la_{k+1},\m_{k+1}) - \mathcal{L}_{\alpha\beta}(\x_{k+1},\uu_{k+1},\widehat{\z}(\la_k,\m_k),\la_k,\m_k) \right].  
\end{aligned}
\end{equation}
Consider the first part \eqref{eq:lem_one_iter_ppl1}. Since $\x_{k+1}$ and $\uu_{k+1}$ are the results of the subproblems \eqref{eq:x_update}
 and \eqref{eq:u_update}, respectively, we have that for any $\x \in \mathcal{X}$ and for any $u \in U$,
\begin{equation}
\begin{aligned}
& \left\langle \nabla f(\x_k), \x_{k+1} - \x \right\rangle + \left\langle \la_k, g(\x_{k+1}) - g(\x) \right\rangle \notag \\
& \quad +\frac{1}{2\eta} \left(\| \x_{k+1} - \x_k \|^2 - \| \x - \x_k \|^2\right) + r(\x_{k+1})-r(\x)\leq 0,  \label{eq:lem_one_iter_ppl_1}
\end{aligned}
\end{equation}
and 
\begin{equation} \label{eq:lem_one_iter_ppl_2}
\begin{aligned} 
\left\langle \nabla_{u} \mathcal{L}_{\alpha\beta}(\ww_{k}), \uu_{k+1} -u \right\rangle + \frac{1}{2\tau} ( \|\uu_{k+1} - \uu_k \|^2 - \|\uu - \uu_k \|^2) \leq 0. 
\end{aligned}
\end{equation} 
By taking $\x = \x_{k}$ in \eqref{eq:lem_one_iter_ppl_1}, $u = \uu_{k}$ in \eqref{eq:lem_one_iter_ppl_2}, and using $\nabla_{u} \mathcal{L}_{\alpha\beta}(\ww_{k}) = \la_k$, we have
 \begin{align}
\left\langle \nabla f(\x_k), \x_{k+1} - \x_k \right\rangle + \left\langle \la_k, g(\x_{k+1}) - g(\x_k) \right\rangle + r(\x_{k+1})-r(\x) \leq -\frac{1}{2\eta} \| \x_{k+1} - \x_k \|^2, \notag \label{eq:lem_one_iter_ppl_3}
\end{align}
and 
\begin{equation} \label{eq:lem_one_iter_ppl_4}
\left\langle \la_k , \uu_{k+1} -\uu_k \right\rangle \leq - \frac{1}{2\tau} \| \uu_{k+1} - \uu_k \|^2. \notag
\end{equation}   
By adding and subtracting the term $\left\langle \nabla  f(\x_k), \x_{k+1} - \x_k \right\rangle$, we obtain
\begin{equation}
\begin{aligned}
& \mathcal{L}_{\alpha\beta}(\x_{k+1},\uu_{k+1},\z_{k},\la_{k},\m_{k}) - \mathcal{L}_{\alpha\beta}(\x_k,\uu_{k},\z_{k},\la_{k},\m_{k}) \\
& = \left[f(\x_{k+1}) + \left\langle \la_k, g(\x_{k+1}) + \uu_{k+1} \right\rangle + r(\x_{k+1}) \right]  
- \left[f(\x_{k}) + \left\langle \la_k, g(\x_{k}) + \uu_{k} \right\rangle +r(\x_k) \right] \\
& = \left\langle \la_k, g(\x_{k+1}) - g(\x_{k}) \right\rangle + \left\langle \la_k, \uu_{k+1} - \uu_{k}\right\rangle 
+ \left[f(\x_{k+1}) - f(\x_{k}) \right] + [r(\x_{k+1})-r(\x_k)] \\
& = \left[ \left\langle \nabla f(\x_k), \x_{k+1} - \x_k \right\rangle + \left\langle \la_k, g(\x_{k+1}) - g(\x_{k}) \right\rangle + r(\x_{k+1})-r(\x_k)\right] \\
& \quad \ 
+ \left[  f(\x_{k+1}) - f(\x_{k}) - \left\langle \nabla f(\x_k), \x_{k+1} - \x_k \right\rangle  \right] 
 + \left\langle \la_k, \uu_{k+1} - \uu_{k} \right\rangle \\
 &  \leq 
- \frac{1}{2}\left(\frac{1}{\eta} - L_f \right) \| \x_{k+1} - \x_{k} \|^2 - \frac{1}{2\tau} \| \uu_{k+1} -\uu_{k}\|^2. 
\label{eq:lem_one_iter_ppl_p1}
\end{aligned}
\end{equation}

Next, we derive an upper bound for the second part. We start by noting that
\begin{equation} 
\begin{aligned}
& \mathcal{L}_{\alpha\beta}(\x_{k+1},\uu_{k+1},\widehat{\z}(\la_{k+1},\m_{k+1}),\la_{k+1},\m_{k+1}) - \mathcal{L}_{\alpha\beta}(\x_{k+1},\uu_{k+1},\widehat{\z}(\la_{k},\m_{k}),\la_k,\m_k)           \\ 
& = \frac{1}{\rho} \left\langle \la_{k+1} - \la_{k}, g(\x_{k+1}) + \uu_{k+1} \right\rangle
- \frac{1}{2\rho}  \left(\| \la_{k+1} - \m_{k+1} \|^2 -  \| \la_k - \m_k \|^2\right). \notag
\end{aligned}
\end{equation}
Using the facts that $g(\x_{k+1}) + \uu_{k+1} = \frac{1}{\rho}(\la_{k+1}-\m_{k+1})$ and $\left\langle a, b \right\rangle =\frac{1}{2} \| a \|^2 + \frac{1}{2} \| b \|^2 - \frac{1}{2} \| a - b \|^2$ for any $a,b \in \mathbb{R}^m$, we have  
$$\frac{1}{\rho}\left\langle \la_{k+1} - \la_{k}, \la_{k+1} - \m_{k+1}  \right\rangle  
 = \frac{1}{2\rho} \left(\| \la_{k+1}-\la_k \|^2 +  \| \la_{k+1} -\m_{k+1} \|^2 - \| \m_{k+1} -\la_k \|^2 \right).$$ 
Hence,
\begin{equation}
\begin{aligned}
& \mathcal{L}_{\alpha\beta}(\x_{k+1},\uu_{k+1},\widehat{\z}(\la_{k+1},\m_{k+1}),\la_{k+1},\m_{k+1}) - \mathcal{L}_{\alpha\beta}(\x_{k+1},\uu_{k+1},\widehat{\z}(\la_{k},\m_{k}),\la_k,\m_k) \\
& \overset{(a)}{\leq}
\frac{1}{2\rho}  \left( 3\rho^2 M_g^2 \| \x_{k+1} - \x_k \|^2 + 3\rho^2 \| \uu_{k+1} - \uu_{k} \|^2 + 3\sigma_k^2 \|\la_k - \m_k\|^2 \right) \\
& \quad + \frac{1}{2\rho}\left( 1- (1 - {\sigma_k})^2\right) \| \la_k - \m_k \|^2 \\
& = \frac{1}{2} \left( 3\rho M_g^2 \| \x_{k+1} - \x_k \|^2 + 3\rho \| \uu_{k+1} - \uu_{k} \|^2  \right)  + \frac{3\sigma_k^2}{2\rho} \| \la_k - \m_k \|^2 \\
& \quad + \frac{1}{2\rho}\left( {2\sigma_k} - {\sigma_k^2}\right) \| \la_k - \m_k \|^2 \\
&  \overset{(b)}{\leq}
\frac{1}{2} \left( 3\rho M_g^2 \| \x_{k+1} - \x_k \|^2 + 3\rho \| \uu_{k+1} - \uu_{k} \|^2  \right) + \frac{2\delta_k^2 + \delta_k}{\rho}, \label{eq:lem_one_iter_ppl_p2}
\end{aligned}	
\end{equation}
where $(a)$ is from \eqref{eq:lem_iter_rel_plada_2} and \eqref{eq:lem_iter_rel_plada_3}, and $(b)$ holds by
$ {\sigma_k}\| \la_k - \m_k \|^2 \leq \frac{\delta_k}{1+ ({1}/{\| \la_k - \m_k \|^2})} \leq \delta_k$. 
Combining  \eqref{eq:lem_one_iter_ppl_p1} and \eqref{eq:lem_one_iter_ppl_p2} yields the desired result: 
\begin{equation}
\begin{aligned} 
 & \mathcal{L}_{\alpha\beta}(\ww_{k+1}) -\mathcal{L}_{\alpha\beta}(\ww_k) \notag \\ & \leq 
 - \frac{1}{2}\left(\frac{1}{\eta} - L_f -  3 \rho M_g^2 \right) \| \x_{k+1} - \x_k \|^2  - \frac{1}{2}\left( \frac{1}{\tau} -3\rho \right) \| \uu_{k+1} -\uu_{k}\|^2 + \frac{2\delta_k^2 + \delta_k}{\rho}, \notag
\end{aligned}
\end{equation}
which completes the proof.
\end{proof}

\begin{lemma} [{Subgradient Error Bound}] \label{lem_upper_bound_primal_gradient_plada}
Let $\{\ww_k\}$ be the sequence generated by Algorithm \ref{alg:plada}, and let $\{\pp_k := (\x_k,\uu_{k},\z_{k})\}$ be the primal sequence. Under Assumptions \ref{assumption_lipschitz_f} and \ref{assumption_bounded_domain}, there exists a constant $d_1 > 0$ such that for the primal subgradient $\boldsymbol{\zeta}_{\pp}^{k+1}:=(\zeta_{\x}^{k+1},\zeta_{\uu}^{k+1},\0) \in \partial_{\pp} \mathcal{L}_{\alpha\beta} (\ww_{k+1})$,
\begin{align} 
\| \boldsymbol{\zeta}_{\pp}^{k+1} \| \leq 
d_1 \left( \| \x_{k+1} - \x_k \| + \| \uu_{k+1} -\uu_k \| \right) +  (M_g + 1) \delta_k, \notag
\end{align}
where
$$d_1 = \max \{ L_f + 1/\eta + \rho (M_g^2  + M_g),  \ \rho( M_g +1) + {1}/{\tau} \}.$$
\end{lemma}

\begin{proof}
Writing down the optimality condition for the update of $\x_{k+1}$ in \eqref{eq:x_update}, we have 
\begin{equation} \label{eq:subgradient_for_x_1}
 0 \in \nabla f(\x_k) + \partial g(\x_{k+1})^{\top} \la_{k} + \frac{1}{\eta}(\x_{k+1} - \x_k) + v_{k+1}, \ \ v_{k+1} \in \partial r(\x_{k+1})
\end{equation}
Using the subdifferential calculus rules, we have
\begin{align} 
\nabla f(\x_{k+1}) + \partial g(\x_{k+1})^{\top} \la_{k+1} + v_{k+1} \in \partial_\x \mathcal{L}_{\alpha\beta}(\ww_{k+1})  \label{eq:subgradient_for_x_2}
\end{align}
By defining the quantity
\begin{align}
\boldsymbol{\zeta}_{\x}^{k+1}  =  \nabla f(\x_{k+1}) - \nabla f(\x_k) + \partial g(\x_{k+1})^{\top} (\la_{k+1} - \la_{k}) - \frac{1}{\eta}(\x_{k+1} - \x_k) 
\end{align}
and using \eqref{eq:subgradient_for_x_1} and \eqref{eq:subgradient_for_x_2}, we obtain that 
$\boldsymbol{\zeta}_{\x}^{k+1} \in \partial_{\x} \mathcal{L}_{\alpha\beta}(\ww_{k+1}). $

Next, define the quantity
\[
\boldsymbol{\zeta}_{\uu}^{k+1} := \uu_{k+1} - \Pi_{U}[\uu_{k+1}  - \la_{k+1} ],
\]
which is equivalent to the {\em projected gradient} of $\mathcal{L}_{\alpha\beta}$ in $\uu$. It is a measure of optimality for the update of $\uu_{k+1}$  \cite{nesterov2012efficiency}:
\begin{equation}
\begin{aligned}                    
\widetilde{\nabla}_{\uu}\mathcal{L}_{\alpha\beta}(\ww_{k+1}) 
& := \uu_{k+1} - \underset{v \in U}{\mathrm{argmin}}\left\lbrace \left\langle \nabla_{\uu}\mathcal{L}_{\alpha\beta}(\ww_{k+1}), v - \uu_{k+1} \right\rangle + \frac{1}{2}\| v - \uu_{k+1} \|^2\right\rbrace  \notag \\ 
& \; = \uu_{k+1} - \widetilde{\uu}_{k+1}. \notag
\end{aligned}
\end{equation}
where we define $\widetilde{\uu}_{k+1}:={\mathrm{argmin}}_{v \in U}\left\lbrace \left\langle \nabla_{\uu}\mathcal{L}_{\alpha\beta}(\ww_{k+1}), v - \uu_{k+1} \right\rangle + \frac{1}{2}\| v - \uu_{k+1} \|^2\right\rbrace$.

From the update of $\z_{k+1}$ in \eqref{eq:z_update}, we have
\begin{align}               
\nabla_{\z}\mathcal{L}_{\alpha\beta}(\ww_{k+1})       & = - (\la_{k+1} -\mu_{k+1}) + \alpha \z_{k+1} = 0. \notag 
\end{align}
Hence, we obtain 
\begin{equation} \label{eq:sub_gradient_plada}
\boldsymbol{\zeta}_{\mathbf{p}}^{k+1} := \begin{pmatrix}
\boldsymbol{\zeta}_{\x}^{k+1}  \\
\boldsymbol{\zeta}_{\uu}^{k+1} \\
0  \\
\end{pmatrix}
\quad \text{where} \quad
\begin{pmatrix}
\boldsymbol{\zeta}_{\x}^{k+1} & \in  \partial_{\x}\mathcal{L}_{\alpha\beta}(\x_{k+1},\uu_{k+1},\z_{k+1},\la_{k+1},\mu_{k+1}) \\
\boldsymbol{\zeta}_{\uu}^{k+1} & = \widetilde{\nabla}_{\uu}\mathcal{L}_{\alpha\beta}(\x_{k+1},\uu_{k+1},\z_{k+1},\la_{k+1},\mu_{k+1})   \\ 
0 & =  \nabla_{\z}\mathcal{L}_{\alpha\beta}(\x_{k+1},\uu_{k+1},\z_{k+1},\la_{k+1},\mu_{k+1})  
\end{pmatrix}. \notag
\end{equation}
%It can be easily verified that if $\partial \mathcal{L}_\rho(\mathbf{w}_{k+1}) \rightarrow 0$, then a KKT point of problem  is obtained.

We derive an upper estimate for $\boldsymbol{\zeta}_{\mathbf{p}}^{k+1}$. A direct calculation gives
\begin{equation}
\begin{aligned}
\| \boldsymbol{\zeta}_{\x}^{k+1} \|
& \leq  \| \nabla f(\x_{k+1}) -\nabla f(\x_k) \| + (1/{\eta}) \| \x_k - \x_{k+1} \| +  \| \partial g(\x_{k+1})\| \| \la_{k+1} -\la_{k} \| \\
& \leq  (L_f + 1/\eta) \| \x_{k+1} - \x_{k} \| +  M_g \| \la_{k+1} -\la_{k} \| \\
& \leq (L_f + 1/\eta) \| \x_{k+1} - \x_{k} \| + \rho M_g^2\| \x_{k+1} - \x_k \| + \rho M_g \| \uu_{k+1} - \uu_k \| + M_g \delta_k \\
& \leq (L_f + 1/\eta + \rho M_g^2)\| \x_{k+1} - \x_{k} \| + \rho M_g \| \uu_{k+1} - \uu_k \| + M_g \delta_k 
\label{eq:sub_gradient_x_plada}
\end{aligned}
\end{equation}

Next, we estimate an upper bound for the component $\boldsymbol{\zeta}_{\uu}^{k+1}$. 
The first-order optimality condition implies that 
\begin{equation} \label{eq:lem_subgradient_u_1}
 \left\langle \nabla_{\uu}\mathcal{L}_{\alpha\beta}(\uu_{k+1}) + (\widetilde{\uu}_{k+1} - \uu_{k+1}),  \uu - \widetilde{\uu}_{k+1} \right\rangle \geq 0. 
\end{equation}
Here, $\nabla_{\uu}\mathcal{L}_{\alpha\beta}(\ww_{k+1})$ is denoted by $\nabla_{\uu}\mathcal{L}_{\alpha\beta}(\uu_{k+1})$. By the definition $\uu_{k+1}$ in \eqref{eq:u_update}, we have 
\begin{equation} \label{eq:lem_subgradient_u_2}
\left\langle \nabla_{\uu}\mathcal{L}_{\alpha\beta}(\uu_{k}) + \frac{1}{\tau}(\uu_{k+1} - \uu_{k}),  {\uu} - \uu_{k+1} \right\rangle \geq 0, 
\end{equation}
where $\nabla_{\uu}\mathcal{L}_{\alpha\beta}(\uu_{k})= \nabla_{\uu}\mathcal{L}_{\alpha\beta}(\x_{k},\uu_{k},\z_{k},\la_{k},\mu_{k})$ for simplicity. Combining \eqref{eq:lem_subgradient_u_1} and \eqref{eq:lem_subgradient_u_2}, with settings $\uu = \uu_{k+1}$ in \eqref{eq:lem_subgradient_u_1} and $\uu = \widetilde{\uu}_{k+1}$ in \eqref{eq:lem_subgradient_u_2}, yields
\begin{equation} 
\left\langle  \nabla_{\uu}\mathcal{L}_{\alpha\beta}(\uu_{k}) - \nabla_{\uu}\mathcal{L}_{\alpha\beta}(\uu_{k+1}) + \frac{1}{\tau}(\uu_{k+1} - \uu_{k}) - (\widetilde{\uu}_{k+1} - \uu_{k+1}),  \widetilde{\uu}_{k+1} - \uu_{k+1} \right\rangle \geq 0, \notag 
\end{equation}
equivalently,  
$$\left\langle  \nabla_{\uu}\mathcal{L}_{\alpha\beta}(\uu_{k}) - \nabla_{\uu}\mathcal{L}_{\alpha\beta}(\uu_{k+1}) + \frac{1}{\tau}(\uu_{k+1} - \uu_{k}),  \widetilde{\uu}_{k+1} - \uu_{k+1} \right\rangle \geq \| \widetilde{\uu}_{k+1} - \uu_{k+1}\|^2.$$
By applying the Cauchy-Schwarz inequality and triangle inequality yields
\begin{equation} 
\left( \|\nabla_{\uu}\mathcal{L}_{\alpha\beta}(\uu_{k}) - \nabla_{\uu}\mathcal{L}_{\alpha\beta}(\uu_{k+1})\| + \frac{1}{\tau} \| \uu_{k+1} - \uu_{k}\|\right)  \| \widetilde{\uu}_{k+1} - \uu_{k+1} \| \geq \| \widetilde{\uu}_{k+1} - \uu_{k+1}\|^2 \notag
\end{equation}
and 
\begin{align}
\|\nabla_{\uu}\mathcal{L}_{\alpha\beta}(\uu_{k}) - \nabla_{\uu}\mathcal{L}_{\alpha\beta}(\uu_{k+1})\| 
&\leq \| \la_{k} - \la_{k+1} \| \notag \\
& \leq  \rho M_g\| \x_{k+1} - \x_k \| + \rho\| \uu_{k+1} - \uu_k \| + \delta_k . \notag 
\end{align}
Therefore, 
\begin{equation} \label{eq:sub_gradient_u_plada}
\| \boldsymbol{\zeta}_{\uu}^{k+1} \| 
= \| \widetilde{\uu}_{k+1} - \uu_{k+1}\| 
\leq \rho M_g\| \x_{k+1} - \x_k \|  + \left( \rho + {1}/{\tau} \right) \|\uu_{k+1} - \uu_{k}\| + \delta_k.
\end{equation}
Combining \eqref{eq:sub_gradient_x_plada} and \eqref{eq:sub_gradient_u_plada}, we obtain 
\begin{equation}
\| \boldsymbol{\zeta}_{\mathbf{p}}^{k+1}\|
\leq d_1 (\|\x_{k+1} - \x_k \| + \|\uu_{k+1} - \uu_k \|) + (M_g + 1) \delta_k, \notag
\end{equation}
where 
$d_1 = \max \{ L_f + 1/\eta + \rho (M_g^2  + M_g)  + {1}/{\eta},  \ \rho( M_g +1) + {1}/{\tau} \}.$
This inequality, along with $\boldsymbol{\zeta}_{\mathbf{p}}^{k+1} \in \partial_{\mathbf{p}} \mathcal{L}_{\alpha\beta}(\ww_{k+1})$, yields the desired result.
\end{proof}

\section{Proofs of Asymptotic Convergence for Algorithm \ref{alg:plada}}

\subsection{Proof of Primal Stationarity (Lemma \ref{lem_primal_convergence_plada})} \label{proof_lem_primal_convergence_plada}

\begin{proof}
From Lemma \ref{lem_one_iter_ppl}, we have
\begin{equation}
C_{\bf p} \left(\| \x_{k+1} - \x_k \|^2 + \| \uu_{k+1} - \uu_k \|^2 \right)  \leq \mathcal{L}_{\alpha\beta}(\ww_k) - \mathcal{L}_{\alpha\beta}(\ww_{k+1}) + \widehat{\delta}_k, \label{eq:stationary_e1_plada}
\end{equation}
where $C_{\bf p} = \max\{ C_1,C_2 \}$. Using Lemma \ref{lem_upper_bound_primal_gradient_plada} and the fact $(a+b+c)^2 \leq 3(a^2 + b^2 + c^2)$, we have
\begin{align} 
\| \boldsymbol{\zeta}_{\mathbf{p}}^{k+1} \|^2
& \leq 3 d_1^2 (\| \x_{k+1} - \x_k \|^2 + \| \uu_{k+1} - \uu_k \|^2 ) + 3(M_g+1)^2 \delta_k^2, \notag
\end{align}
which, combined with \eqref{eq:stationary_e1_plada}, yields
 \begin{align}
 \| \boldsymbol{\zeta}_{\mathbf{p}}^{k+1} \|^2 
&  \leq \frac{3 d_1^2}{C_\mathbf{p}} \left(\mathcal{L}_{\alpha\beta}(\ww_k) - \mathcal{L}_{\alpha\beta}(\ww_{k+1}) + \widehat{\delta}_k \right)  + 3 (M_g+1)^2  \delta_k^2. \notag
\end{align}
Summing up the above inequalities over $k=0,\ldots, T-1$, we obtain
\begin{align}
\sum_{k=0}^{T-1} \| \boldsymbol{\zeta}_{\mathbf{p}}^{k+1} \|^2 
 & \leq
\frac{3 d_1^2}{C_{\mathbf{p}}} \left(\mathcal{L}_{\alpha\beta}(\ww_0) - \mathcal{L}_{\alpha\beta}(\ww_{T}) +  \sum_{k=0}^{T-1}  \widehat{\delta}_k  \right) \notag  + 3 (M_g+1)^2   \sum_{k=0}^{T-1}\delta_k^2 \notag
\end{align}

Since $\sum_{k=0}^{\infty} \delta_k^2< + \infty$, we denote $B_\delta = \sum_{k=0}^{\infty} \delta_k^2$.  Therefore,
\begin{equation}
\begin{aligned}
& \frac{1}{T}\sum_{k=0}^{T-1} \| \boldsymbol{\zeta}_{\mathbf{p}}^{k+1} \|^2 \\
& \leq  \frac{\frac{3d_1^2}{C_{\mathbf{p}}}\left(\mathcal{L}_{\alpha\beta}(\ww_0) - \mathcal{L}_{\alpha\beta}(\ww_{T})\right)}{T} 
+ \frac{ \frac{3 d_1^2}{C_{\mathbf{p}}} \sum_{k=0}^{T-1} \widehat{\delta}_k}{T} 
+ \frac{3 (M_g+1)^2 \sum_{k=0}^{T-1}\delta_k^2}{T} \\
& \leq \frac{\frac{3d_1^2}{C_{\mathbf{p}}}\left(\mathcal{L}_{\alpha\beta}(\ww_0) - \underline{\mathcal{L}_{\alpha\beta}}\right)}{T} 
+ \frac{ \left( \frac{3 d_1^2}{2 \rho C_{\mathbf{p}}} + 3 (M_g+1)^2 \right) \sum_{k=0}^{T-1} {\delta}_k^2}{T} 
+ \frac{\frac{1}{\rho}\sum_{k=0}^{T-1}\delta_k}{T}, \label{eq:stationary_e2}
 \end{aligned}
\end{equation}
where the second inequality holds by the the lower boundedness of $\mathcal{L}_{\alpha\beta}(\ww_k)$, denoted by $\mathcal{L}_{\alpha\beta}$, that is from the boundedness of generated sequences, and $\widehat{\delta}_k = \frac{\delta_k^2}{2{\rho}} + \frac{\delta_k}{\rho}$. 

Note that given $\delta_k = {\kappa\cdot (k+1)^{-1}}$ and $\kappa>0$, for sufficiently large $T$, we know that  
$$\sum_{k=0}^{T-1}\delta_k \approx \kappa^{-1}\log(\kappa T).$$ Since the last term on the right-hand side (RHS) of \eqref{eq:stationary_e2} dominates the other terms and $T$ grows faster than $\log(T)$, the RHS of \eqref{eq:stationary_e2} decreases to 0 as $T$ increase.
\end{proof}

\subsection{Proof of Primal Feasibility (Lemma \ref{lem_feasibility_plada})}

\begin{proof}
From the $\m$-update \eqref{eq:mu_update}, notice that
$\m_{k+1}  = \m_0 + \frac{1}{\rho}\sum^k_{t=0} \sigma_t (\la_t - \m_t).$
Using the fact that $\| a \| -\| b\| \leq \|a + b \|$ for any $a,b \in \mathbb{R}^m$, we have
\begin{align}
\left\| \sum^{\infty}_{t=0} \sigma_t (\la_t - \m_t ) \right\| \leq \| \m_{k+1} \| + \|\m_{0} \| < + \infty, \label{eq:lem_feasibility_a1}
\end{align} 
where the last inequality hold by the boundedness of  $\{\m_{k}\}$ from Assumption \ref{assumption_bounded_multiplier} together with the boundedness of sequence $\{(\la_k -\m_k) := \rho (g(\x_k) + u_k)\}$. The convergence of the sequences $\{\x_{k}\}$ and $\{u_{k}\}$ to finite values $(\overline{\x},\overline{u})$, along with the definition of $\la_{k} = \m_{k} + \rho( g(\x_{k}) +u_{k})$, implies that  $\{ \la_{k} - \m_{k} \}$ is convergent to a finite value $(\overline{\la} -\overline{\m})$.

We prove that $\{ \la_k - \m_k\} \rightarrow 0$  by contradiction. Assume that $\{ \la_k - \m_k \}$ does not converge $0$, meaning there exists some $e \neq 0$ such that $ \{ \la_k - \m_k\}  \rightarrow e $ as $k\rightarrow \infty$. Since $\sum^{\infty}_{k=0} \sigma_k = \infty$, we see that 
\[
\left\| \sum^{\infty}_{k=0} \sigma_k (\la_k - \m_k ) \right\| = \infty, 
\]
which contradicts \eqref{eq:lem_feasibility_a1}. This contradiction leads to the desired result that $\overline{\la} - \overline{\m} = 0$. It directly follows the definitions of $\la_{k+1}$ and $u_{k+1}$ that
$$ 0 = \frac{1}{\rho}\left( \overline{\la} - \overline{\m} \right) =g(\overline{\x}) + \overline{u}  \ \ \text{and} \ \ \overline{u} \geq 0.$$
Hence, we have the feasibility of $\overline{\x}$, namely, $g(\overline{\x}) \leq 0.$
Now consider the dual stationarity defined as $\boldsymbol{\zeta}_{{\bf d}}^{k+1} :=(\zeta_{\la}^{k+1}, \zeta_{\m}^{k+1}) \in \partial_{{\bf d}} \mathcal{L}_{\rho} (\ww_{k+1})$.
Since $\la$ update step \eqref{eq:lambda_update} is an exact maximization, $\boldsymbol{\zeta}_{\la}^{k+1}=0$. Due to pointwise convergence of $\| \la_{k} - \m_{k} \|\to0$ as $k\to0$, $\boldsymbol{\zeta}_{\m}^{k+1}\to0$ as well by $\m$ update step \eqref{eq:mu_update}. 
% Since the same decaying sequence $\sigma_k$ drives the primal stationarity to zero, dual stationarity also converges at the same rate: $\frac{1}{T}\sum_{k=0}^{T-1}\| \boldsymbol{\zeta}_{\mathbf{d}}^{k+1} \|^2 = \tilde{\mathcal{O}}\left({1}/{T}\right).$
\end{proof}

\subsection{Proof of Dual Feasibility (Lemma \ref{lem_dual_feasibility_plada})} \label{proof_lem_dual_feasibility_plada}

\begin{proof}
By the update rule $\eqref{eq:u_update}$, $\uu\in\mathbb{R}_+^m$ and $\bar{\uu} \ge \0$. And the stationarity condition with respect to $\uu$ implies that a fixed point $\bar{\uu}$ must satisfy 
\begin{equation}
    \bar{\uu} = \Pi_{\mathbb{R}_+^m}[\bar{\uu} - \tau\bar{\la}] \iff \langle \bar{\la}, \uu - \bar{\uu} \rangle \ge 0, \quad \forall\uu \in \mathbb{R}_+^m. \notag
\end{equation}
This condition can be stated as 
\begin{equation}
    \0 \in \bar{\la} + \mathcal{N}_{\mathbb{R}_+^m}(\bar{\uu}) \iff -\bar{\la} \in \mathcal{N}_{\mathbb{R}_+^m}(\bar{\uu}), \label{eq:lambda_normal_cone}
\end{equation}
where $\mathcal{N}_{\mathbb{R}_+^m}(\bar{\uu})$ is the normal cone to $\mathbb{R}_+^m$ at the limit point $\bar{\uu}$. For any component $j$, if $\bar{\uu}_j > \0$, the point is in the interior of the set along this axis, and the normal cone contains only the zero vector, i.e., $(\mathcal{N}_{\mathbb{R}_+^m}(\bar{\uu}))_j = \{\0\}$. Thus, \eqref{eq:lambda_normal_cone} implies that if $\bar{\uu}_j > \0$, we must have $\bar{\la}_j = \0$. If $\bar{\uu}_j = \0$, the point is at the boundary, and the normal cone consists of all non-positive scalars, i.e., $(\mathcal{N}_{\mathbb{R}_+^m}(\bar{\uu}))_j = (-\infty, 0]$. In any case, we conclude that $\bar{\la} \ge \0$.
\end{proof}

\subsection{Proof of Complementary Slackness (Lemma \ref{lem_complementary_slackness_plada})} \label{proof_lem_complementary_slackness_plada}

\begin{proof}
We prove that for each component $j$, $\bar{\la}_j g_j(\bar{\x})=0$. Consider two cases for $\bar{\uu}_j \ge 0$, where the non-negativity follows from the update rule \eqref{eq:u_update}. First, if $\bar{\uu}_j > \0$, $\bar{\la}_j = \0$ by the stationarity condition used in the proof of Lemma \ref{lem_dual_feasibility_plada}. Thus,
\begin{equation}
    \bar{\la}_j g_j(\bar{\x}) = 0 \cdot g_j(\bar{\x}) = 0. \notag
\end{equation}
Second, if $\bar{\uu}_j = 0$, from Lemma \ref{lem_feasibility_plada}, $g_j(\bar{\x}) = -\bar{\uu}_j = 0$. Thus,
\begin{equation}
    \bar{\la}_j g_j(\bar{\x}) = \bar{\la}_j \cdot 0 = 0, \notag
\end{equation}
concluding the proof.
\end{proof}

\section{Proofs of Non-asymptotic Convergence Rate for Algorithm \ref{alg:plada}}

\subsection{Proof of Primal Stationarity Convergence Rate (Lemma \ref{lem_rate_primal_stationarity_plada})}

\begin{proof}
The rate of average residual convergence established in Lemma \ref{lem_primal_convergence_plada} corresponds to using $\la_k$. From the definition of primal residual,
\begin{equation}
    \boldsymbol{\zeta}_{\x}^{k+1} = \nabla f(\x_{k+1}) - \nabla f(\x_k) + \partial g(\x_{k+1})^{\top}(\la_{k+1}-\la_k) - \frac{1}{\eta}(\x_{k+1}-\x_k). \notag
\end{equation}
And the optimality condition for the update of $\x+{k+1}$ in \eqref{eq:x_update} gives
\begin{equation}
    -(\nabla f(\x_k) + \partial g(\x_{k+1})^\top \la_k + \frac{1}{\eta}(\x_{k+1}-\x_k)) \in \partial r(\x_{k+1}). \notag
\end{equation}
Rearranging this gives an expression for the stationarity residual with respect to $\la_{k+1}$:
\begin{equation}
    \boldsymbol{\zeta}_{\x}^{k+1} \in \nabla f(\x_{k+1}) + \partial r(\x_{k+1}) + \partial g(\x_{k+1})^\top \la_{k+1}
\end{equation}
The difference between the stationarity residuals for $\la_k$ and $\boldsymbol{\nu}_k$ is bounded by $\|\partial g(\bar{\x}_T)^\top(\bar{\boldsymbol{\nu}}_T - \bar{\la}_T)\|$, where $\bar{\x}_T = \frac{1}{T}\sum \x_k$ and $\bar{\la}_T = \frac{1}{T}\sum \la_k$. The difference in the averaged multipliers is: 
\begin{equation} 
\bar{\boldsymbol{\nu}}_T - \bar{\la}_T = \frac{1}{T}\sum_{k=0}^{T-1} (\boldsymbol{\nu}_k - \la_k) = \frac{1}{T}\sum_{k=0}^{T-1} \frac{1}{\tau}(\uu_{k+1} - \uu_k) = \frac{1}{\tau T}(\uu_T - \uu_0). 
\end{equation} 
Since the iterates $\{\uu_k\}$ are bounded, $\|\bar{\boldsymbol{\nu}}_T - \bar{\la}_T\| = \mathcal{O}(1/T)$. This difference vanishes faster than the stationarity residual itself, which converges at $\tilde{\mathcal{O}}(1/\sqrt{T})$. Therefore, the stationarity condition holds for $\bar{\boldsymbol{\nu}}_T$ with the same convergence rate.
\end{proof}

\subsection{Proof of Primal Feasibility Convergence Rate (Lemma \ref{lem_rate_primal_feasibility_plada})} \label{proof_lem_rate_primal_feasibility_plada}

\begin{proof}
The rate of convergence for primal feasibility is independent of the choice of multiplier and can be quantified by leveraging the convergence rate of the dual variables. From the update rule \eqref{eq:lambda_update} for $\la_{k+1}$, we have the relation $g(\x_{k+1}) + \uu_{k+1} = \frac{1}{\rho}(\la_{k+1} - \m_{k+1})$. Since $\uu_{k+1} \ge \0$, the norm of the primal feasibility violation, $[g(\x_{k+1})]^{+} = \max\{\0, g(\x_{k+1})\}$, is bounded: 
\begin{equation} 
\|[g(\x_{k+1})]^{+}\| \le \|g(\x_{k+1}) + \uu_{k+1}\| = \frac{1}{\rho}\|\la_{k+1} - \m_{k+1}\|.
\end{equation} 
This inequality holds because for any component $j$, if $g_j(\x_{k+1}) \le \0$, then $([g(\x_{k+1})]^{+})_j^2 = \0 \le (g_j(\x_{k+1}) + \uu_{j,k+1})^2$. If $g_j(\x_{k+1}) > \0$, then since $\uu_{j,k+1} \ge \0$, we have $([g(\x_{k+1})]^{+})_j^2 = g_j(\x_{k+1})^2 \le (g_j(\x_{k+1}) + \uu_{j,k+1})^2$. Summing over all components yields the squared norm inequality.

The convergence results in Lemma \ref{lem_feasibility_plada} establish that $\frac{1}{T}\sum_{k=0}^{T-1} \|\la_{k+1} - \m_{k+1}\|^2 = \tilde{\mathcal{O}}(1/T)$. This allows us to bound the running average of the squared primal feasibility violation: 
\begin{equation}
\frac{1}{T}\sum_{k=0}^{T-1} \|[g(\x_{k+1})]^{+}\|^2 \le \frac{1}{\rho^2 T}\sum_{k=0}^{T-1} \|\la_{k+1} - \m_{k+1}\|^2 = \tilde{\mathcal{O}}\left(\frac{1}{T}\right).
\end{equation}
By applying Jensen's inequality, we find that the average primal feasibility violation converges at a rate of $\tilde{\mathcal{O}}(1/\sqrt{T})$. 
\end{proof}

\subsection{Proof of Complementary Slackness Convergence Rate (Lemma \ref{lem_rate_complementary_slackness_plada})}

\begin{proof}
From the optimality of $\uu$-update \eqref{eq:u_update} and the construction of $\boldsymbol{\nu}$, $\uu_{k+1} = \Pi_{\mathbb{R}_+^m}[\uu_{k+1} - \tau \boldsymbol{\nu}_{k}]$ implies the complementarity condition $\langle \boldsymbol{\nu}_k, \uu - \uu_{k+1} \rangle \ge 0$ for all $\uu \ge \0$. By choosing $\uu = \0$, we have $-\langle \boldsymbol{\nu}_k, \uu_{k+1} \rangle \ge 0$. Since both $\boldsymbol{\nu}_k$ and $\uu_{k+1}$ are non-negative vectors, they are orthogonal component-wise:
\begin{equation}
    \nu_{j,k} u_{j,k+1} = 0 \quad \forall j \in [m]. \label{eq:elementwise_complementarity}
\end{equation}
Now consider the approximate complementary slackness at each iteration defined as Definition \ref{def_epsilon-kkt}. By the $\la$-update \eqref{eq:lambda_update} and \eqref{eq:elementwise_complementarity},
\begin{equation}
    \nu_{j,k} g_j(\x_{k+1}) = \nu_{j,k} \left(-u_{j,k+1} + \frac{1}{\rho}(\lambda_{j,k+1} - \mu_{j,k+1})\right) = \frac{\nu_{j,k}}{\rho}(\lambda_{j,k+1} - \mu_{j,k+1}). \notag
\end{equation}
By applying Cauchy-Schwarz inequality on the approximate complementary slackness,
\begin{equation}
    \sum_{j=1}^m |\nu_{j,k} g_j(\x_{k+1})| \le \frac{1}{\rho} \sum_{j=1}^m |\nu_{j,k}| |\lambda_{j,k+1} - \mu_{j,k+1}| \le \frac{1}{\rho} \|\boldsymbol{\nu}_k\| \|\la_{k+1} - \m_{k+1}\|.
\end{equation}
Since $\{\la_k\}$ and $\{\uu_k\}$ are bounded, $\{\boldsymbol{\nu}_k\}$ is bounded by a constant $B_{\nu}:=\max_{k\ge1}\{\boldsymbol{\nu}_k\}$. Thus,
\begin{equation}
    \sum_{j=1}^m |\nu_{j,k} g_j(\x_{k+1})| \le \frac{B_{\nu}}{\rho} \|\la_{k+1} - \m_{k+1}\|. \notag
\end{equation}
Now, we can analyze the running average:
\begin{equation}
    \frac{1}{T}\sum_{k=0}^{T-1} \sum_{j=1}^m |\nu_{j,k} g_j(\x_{k+1})| \le \frac{B_{\nu}}{\rho T} \sum_{k=0}^{T-1} \|\la_{k+1} - \m_{k+1}\|. \notag
\end{equation}
Using Jensen's inequality and the established rate for the dual residual from , we get:
$$\frac{1}{T}\sum_{k=0}^{T-1} \|\la_{k+1} - \m_{k+1}\| \le \sqrt{\frac{1}{T}\sum_{k=0}^{T-1} \|\la_{k+1} - \m_{k+1}\|^2} = \sqrt{\tilde{\mathcal{O}}\left(\frac{1}{T}\right)} = \tilde{\mathcal{O}}\left(\frac{1}{\sqrt{T}}\right).$$
This then establishes that the average of the sum of absolute values converges at the required rate of $\tilde{\mathcal{O}}(1/\sqrt{T})$.
\end{proof}

\section{Supporting Lemmas for Convergence Analysis of Algorithm \ref{alg:ppala}}
Now, we establish several key properties of Algorithm \ref{alg:ppala}, including important relations among the primal and dual sequences, the approximate decrease of the Proximal-Perturbed Augmented Lagrangian ($L_\rho$), and error bounds for its subgradient in the primal variables. These intermediate results are crucial stepping stones for proving the algorithm's overall convergence to an $\epsilon$-KKT point. We first provide basic yet crucial relations on the sequences ${{\la^k}}$, ${{\m^k}}$, and ${{\mathbf{x}^k}}$.

\begin{lemma} \label{lem_iterates_relations} 
Let $\{(\x_k,\uu_k,\z_k,\la_k,\m_k)\}$ be the sequence generated by Algorithm \ref{alg:ppala} with the choice of the sequence $\{\delta_k\}$ as in \eqref{eq:delta_k}. Under Assumption \ref{assumption_bounded_domain}, for any $k \geq 1$, the following relations hold: 
\begin{subequations}
\begin{align}
\| \m_{k+1} - \m_k \|^2 & = \sigma_k^2 \| \la_k - \m_k \|^2 \leq {\delta_k^2}/{4}, \label{eq:lem_iter_rel_1-1} \\
% \| \m_{k+1} - \m_k \|^2 & = (\sigma^2_k/\rho^2)\| \la_k - \m_k \|^2 \leq \delta^2_k/2 \\
\sigma_k \| \la_k - \m_k \|^2 & \leq \delta_k, \label{eq:lem_iter_rel_1-2} \\
\| \m_{k+1} - \la_k \|^2 & = (1 - \sigma_k)^2 \| \la_k - \m_k \|^2,  \label{eq:lem_iter_rel_2} \\
\| \la_{k+1}-\la_k \|^2 & \leq 3 \rho^2 M_g^2  \| \x_{k+1} - \x_k \|^2 + 3\rho^2 \| \uu_{k+1} - \uu_k \|^2 + {3\delta_k^2}/{4}, \label{eq:lem_iter_rel_3}
\end{align}
\end{subequations}
where $M_g$ denotes the Lipschitz constant of $g$ from~\eqref{eq:bound_jacobian_gx}.
\end{lemma}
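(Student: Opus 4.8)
The plan is to read off all four relations directly from the dual updates \eqref{eq:mu_update} and \eqref{eq:lambda_update}, the definition of $\sigma_k$, and two elementary scalar inequalities; no analytic machinery is needed.

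\textbf{Relations \eqref{eq:lem_iter_rel_1-1}--\eqref{eq:lem_iter_rel_1-2}.} Subtracting $\m_k$ from \eqref{eq:mu_update} gives $\m_{k+1}-\m_k=\sigma_k(\la_k-\m_k)$, so $\|\m_{k+1}-\m_k\|^2=\sigma_k^2\|\la_k-\m_k\|^2$ holds by definition. Setting $t:=\|\la_k-\m_k\|^2\ge 0$ and using $\sigma_k=\delta_k/(t+1)$, the bound $\sigma_k^2 t\le \delta_k^2/4$ reduces to $t/(t+1)^2\le 1/4$, which is just $(t+1)^2-4t=(t-1)^2\ge 0$; likewise $\sigma_k t=\delta_k t/(t+1)\le \delta_k$ since $t/(t+1)\le 1$. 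This gives \eqref{eq:lem_iter_rel_1-1} and \eqref{eq:lem_iter_rel_1-2}.

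\textbf{Relation \eqref{eq:lem_iter_rel_2}.} This is a one-line identity: from \eqref{eq:mu_update}, $\m_{k+1}-\la_k=(\m_k-\la_k)+\sigma_k(\la_k-\m_k)=(1-\sigma_k)(\m_k-\la_k)$, and squaring yields $\|\m_{k+1}-\la_k\|^2=(1-\sigma_k)^2\|\la_k-\m_k\|^2$.

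\textbf{Relation \eqref{eq:lem_iter_rel_3}.} The key observation is that applying \eqref{eq:lambda_update} at iteration $k-1$ gives the closed form $\la_k=\m_k+\rho(g(\x_k)+\uu_k)$, valid for every $k\ge 1$ — this is exactly why the lemma is restricted to $k\ge 1$. Subtracting this from $\la_{k+1}=\m_{k+1}+\rho(g(\x_{k+1})+\uu_{k+1})$ yields $\la_{k+1}-\la_k=(\m_{k+1}-\m_k)+\rho\big(g(\x_{k+1})-g(\x_k)\big)+\rho(\uu_{k+1}-\uu_k)$. Then $\|a+b+c\|^2\le 3(\|a\|^2+\|b\|^2+\|c\|^2)$, together with $\|\m_{k+1}-\m_k\|^2\le \delta_k^2/4$ from \eqref{eq:lem_iter_rel_1-1} and the $M_g$-Lipschitz continuity of $g$ recorded after \eqref{eq:bound_jacobian_gx}, gives precisely \eqref{eq:lem_iter_rel_3}. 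The computations are routine; the only point requiring care is the index bookkeeping — recognizing that $\la_k$ inherits the representation $\m_k+\rho(g(\x_k)+\uu_k)$ from the previous iteration's step — and there is no genuine obstacle, since this lemma merely packages the structural identities consumed by the later, more substantial lemmas.
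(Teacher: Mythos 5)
Your proposal is correct and follows essentially the same route as the paper: the first three relations read directly off the $\m$-update and the definition of $\sigma_k$ (your $(t-1)^2\ge 0$ bound is the same elementary fact the paper invokes via $a+b\ge 2\sqrt{ab}$), and the fourth uses the $\la$-update at consecutive iterations, the $M_g$-Lipschitz continuity of $g$, and $\|a+b+c\|^2\le 3(\|a\|^2+\|b\|^2+\|c\|^2)$ together with \eqref{eq:lem_iter_rel_1-1}, exactly as in the paper. Your explicit remark on why $k\ge 1$ is needed (so that $\la_k=\m_k+\rho(g(\x_k)+\uu_k)$ is inherited from the previous iteration) is a point the paper leaves implicit, but it is the same argument.
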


\begin{proof}
Relations \eqref{eq:lem_iter_rel_1-1}, \eqref{eq:lem_iter_rel_2} and \eqref{eq:lem_iter_rel_3} follow the same proofs as \eqref{eq:lem_iter_rel_plada_1}, \eqref{eq:lem_iter_rel_plada_2} and \eqref{eq:lem_iter_rel_plada_3}, respectively.

By the definitions $\sigma_k = \frac{\delta_k}{\| \la_k - \m_k \|^2 + 1} \leq 1 $ and $\delta_k \in (0,1]$, we know that $\sigma_k \leq 1$. Thus, we obtain the relation \eqref{eq:lem_iter_rel_1-2}:
\begin{equation} \label{eq:lem_iterates_relations_1_1}
	\sigma_k \| \la_k - \m_k \|^2 = 
	\frac{\delta_k}{1 + \frac{1}{\| \la_k - \m_k \|^2} } \leq \delta_k. \notag
\end{equation}
Subtracting $\m_{k+1}$ from $\la_k$ yields
\[ 
\|  \la_k - \m_{k+1} \|= \| \la_{k} - \m_{k} -  \sigma_{k}(\la_{k} - \m_{k}) \| = (1-\sigma_{k}) \| \la_{k} - \m_{k} \|.
\] 
Squaring both sides of the inequality yields the relation \eqref{eq:lem_iter_rel_2}. 
\end{proof}

The relations in Lemma \ref{lem_iterates_relations} are critical to our technique for proving convergence, bypassing the need for the surjectivity of the Jacobian $\nabla g(\x)$ (or subgradient mapping $\partial g(\mathbf{x})$) as in \cite{bolte2018nonconvex,boct2020proximal}. 

\begin{lemma}\label{lem_descent_lemma} 
Let $\{\x^k\}$ be the sequence generated by Algorithm \ref{alg:ppala}. Under Assumptions \ref{assumption_lipschitz_f}, \ref{assumption_lipschitz_g} and \ref{assumption_bounded_domain}, there exists a constant $L_\mathcal{\ell}>0$ such that   
\begin{align}
\mathcal{\ell}_{\rho}(\x_{k+1}) &\leq \mathcal{\ell}_{\rho}(\x_k) + \left\langle \nabla_{\x} \mathcal{\ell}_{\rho}(\x_k), \x_{k+1} - \x_k \right\rangle + \frac{L_\mathcal{\ell}}{2} \| \x_{k+1} - \x_k \|^2,
\end{align}
where $L_\mathcal{\ell} :=  L_f + L_g B_{\la} + \rho (L_g B_{\uu} + L_g B_g + M_g^2)$  with 
$B_{\la} = \max_{k \geq 0} \|\la_k \| $, $B_{\uu} = \max_{k \geq 0} \|\uu_k \| $, $ B_g = \max_{\x \in \mathrm{dom}(r)} \| g(\x) \|$  and $ M_g = \max_{\x \in \mathrm{dom}(r)} \| \nabla g(\x) \|$ from \eqref{eq:bound_jacobian_gx}.
\end{lemma}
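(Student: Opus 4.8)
The plan is to establish the descent inequality as a standard consequence of Lipschitz continuity of the gradient $\nabla_\x \ell_\rho(\cdot)$ (with the remaining arguments frozen), which follows from the classical descent lemma once we verify that $\x \mapsto \nabla_\x \ell_\rho(\x, \uu_k, \z_k, \la_k, \m_k)$ is $L_\ell$-Lipschitz on $\mathrm{dom}(r)$ with the stated constant. So the real content is the Lipschitz estimate, and the descent inequality then comes for free from integrating along the segment $[\x_k, \x_{k+1}]$ (which lies in $\mathrm{dom}(r)$ by convexity of $\mathrm{dom}(r)$, as guaranteed by Assumption~\ref{assumption_bounded_domain}).

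First I would write out the $\x$-gradient explicitly from the definition of $\ell_\rho$: differentiating $f(\x) + \langle \la, g(\x) + \uu - \z\rangle + \langle \m, \z\rangle + \frac{\alpha}{2}\|\z\|^2 - \frac{\beta}{2}\|\la - \m\|^2 + \frac{\rho}{2}\|g(\x) + \uu\|^2$ with respect to $\x$ (with $\uu, \z, \la, \m$ fixed) gives
\begin{align}
\nabla_\x \ell_\rho(\x) = \nabla f(\x) + \nabla g(\x)\la + \rho\, \nabla g(\x)\bigl(g(\x) + \uu\bigr). \notag
\end{align}
Then for $\x, \x' \in \mathrm{dom}(r)$ I would bound $\|\nabla_\x \ell_\rho(\x) - \nabla_\x \ell_\rho(\x')\|$ term by term via the triangle inequality. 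The first term contributes $L_f \|\x - \x'\|$ by Assumption~\ref{assumption_lipschitz}. For the second term, $\|\nabla g(\x)\la - \nabla g(\x')\la\| \le \|\nabla g(\x) - \nabla g(\x')\|\,\|\la\| \le L_g B_\la \|\x - \x'\|$ using Assumption~\ref{assumption_lipschitz} and the boundedness $\|\la\| \le B_\la$ from Lemma~\ref{lem_bounded_mutipliers}. For the third term I would add and subtract a cross term, e.g.
\begin{align}
& \nabla g(\x)\bigl(g(\x) + \uu\bigr) - \nabla g(\x')\bigl(g(\x') + \uu\bigr) \notag \\
& = \bigl(\nabla g(\x) - \nabla g(\x')\bigr)\bigl(g(\x) + \uu\bigr) + \nabla g(\x')\bigl(g(\x) - g(\x')\bigr), \notag
\end{align}
so the norm is at most $L_g\bigl(B_g + B_\uu\bigr)\|\x - \x'\| + M_g \cdot M_g \|\x - \x'\|$, using $\|g(\x) + \uu\| \le B_g + B_\uu$, the Lipschitz continuity $\|g(\x) - g(\x')\| \le M_g\|\x - \x'\|$ from~\eqref{eq:bound_jacobian_gx}, and $\|\nabla g(\x')\| \le M_g$. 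Multiplying by $\rho$ and summing all three contributions yields the Lipschitz constant $L_\ell = L_f + L_g B_\la + \rho(L_g B_\uu + L_g B_g + M_g^2)$, matching the claim.

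Finally, with $L_\ell$-Lipschitz continuity of $\nabla_\x \ell_\rho$ in hand, the descent inequality
$\ell_\rho(\x_{k+1}) \le \ell_\rho(\x_k) + \langle \nabla_\x \ell_\rho(\x_k), \x_{k+1} - \x_k\rangle + \tfrac{L_\ell}{2}\|\x_{k+1} - \x_k\|^2$
is immediate by the fundamental theorem of calculus applied to $t \mapsto \ell_\rho(\x_k + t(\x_{k+1} - \x_k))$, provided both iterates lie in $\mathrm{dom}(r)$ — which holds since $\x_{k+1} \in \mathrm{dom}(r)$ by the definition of the proximal map in~\eqref{eq:x_update} and $\mathrm{dom}(r)$ is convex. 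I do not anticipate a serious obstacle here; the only mild subtlety worth a sentence is justifying that one may replace the global bounds $B_g, M_g$ on $\mathrm{dom}(r)$ and the iterate-dependent bounds $B_\la, B_\uu$ (from Lemmas~\ref{lem_bounded_mutipliers} and the projection onto $[0,U]$) consistently, i.e., that $L_\ell$ is a genuine constant independent of $k$ — which it is, since all of $L_f, L_g, B_\la, B_\uu, B_g, M_g, \rho$ are fixed.
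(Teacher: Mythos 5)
Your proposal is correct and follows essentially the same route as the paper: compute $\nabla_\x \ell_\rho(\x) = \nabla f(\x) + \nabla g(\x)(\la + \rho(g(\x)+\uu))$, bound the gradient difference term by term via the triangle inequality with an add-and-subtract cross-term decomposition using $L_f, L_g, B_\la, B_\uu, B_g, M_g$, and then invoke the standard descent lemma. The only difference is a cosmetic regrouping (the paper bundles $\la_k + \rho\uu_k$ together while you bundle $g(\x)+\uu$), which yields the identical constant $L_\ell$.
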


In the statement of Lemma \ref{lem_descent_lemma}, we omitted $(\uu_k,\z_k,\la_k,\m_k)$ in the argument of $\ell_{\rho}(\cdot)$ for simplicity.

\begin{proof}
Note that 
$\nabla_{\x} \mathcal{\ell}_{\rho}(\x,\uu,\z,\la,\m) = \nabla f(\x) + \nabla g(\x)( \la + \rho (g(\x)+ \uu))$. A direct computation gives
\begin{equation}
\begin{aligned}
\| \nabla_{\x} \mathcal{\ell}_{\rho}(\x_{k+1}) - \nabla_{\x} \mathcal{\ell}_{\rho}(\x_k) \| & \leq  \| \nabla f(\x_{k+1}) - \nabla f(\x_k) \| \notag + \| \left(\nabla g(\x_{k+1}) -\nabla g(\x_k) \right) (\la_k + \rho \uu_{k}) \| \notag \\ 
& \quad  + \rho \| \nabla g(\x_{k+1}) g(\x_{k+1}) - \nabla g(\x_k) g(\x_{k+1}) \| \notag \\
& \quad+ \rho \| \nabla g(\x_k) g(\x_{k+1}) - \nabla g(\x_k) g(\x_k) \|  \notag \\
& \leq  L_f \| \x_{k+1} - \x_k \| + L_g (B_{\la} + \rho B_{\uu}) \| \x_{k+1} - \x_k \| \notag \\
& \quad + \rho L_g B_g \| \x_{k+1} - \x_k \| + \rho M_g^2 \| \x_{k+1} - \x_k \|  \notag \\
& \leq  \left( L_f + L_g B_{\la} + \rho (L_g B_{\uu} + L_g B_g + M_g^2) \right) \| \x_{k+1} - \x_k \|. \notag
\end{aligned}
\end{equation}
Hence, by the descent lemma  \cite[Proposition A.24]{bertsekas1999nonlinear}, we obtain the desired result.
\end{proof}

Now, we establish key properties that lead to our main convergence results.

\begin{lemma} \label{lem_ppal_decrease_converge}
Let the sequence $\left\{\ww_{k}\right\}$ be generated by Algorithm \ref{alg:ppala}. Under Assumptions \ref{assumption_lipschitz_f}, \ref{assumption_lipschitz_g} and \ref{assumption_bounded_domain}, the Proximal-Perturbed Augmented Lagrangian $\mathcal{L}_{\rho}$ \eqref{eq:ppal} satisfies:
\begin{enumerate}[itemsep=1ex, label=(\alph*), ref= {(\alph*)}]

\item \label{lem_one_iter_ppal} {\bf (Approximate Decrease of $\mathcal{L}_{\rho}$)}
\begin{equation}
\mathcal{L}_{\rho}(\ww_{k+1}) - \mathcal{L}_{\rho}(\ww_k) \leq - c_1 \| \x_{k+1} - \x_k \|^2 - c_2 \| \uu_{k+1} - \uu_{k} \|^2 + \widehat{\delta}_k, \notag 
\end{equation}
where $c_1 = \frac{1}{2}\left(\frac{1}{\eta} - L_{\ell} -  3 \rho M_g^2 \right) >0 $, $c_2 = \left(\frac{1}{\tau} - 2 \rho \right) >0$, and $\widehat{\delta}_k := \frac{\delta_k^2}{4{\rho}} + \frac{\delta_k}{\rho}$. 

\item \label{lem_lagrangian_convergence} {\bf (Convergence of $\mathcal{L}_{\rho}$)}  
the sequence $\{\mathcal{L}_{\rho}(\ww_{k})\}$ is convergent, i.e.,
$\lim_{k \rightarrow \infty} \mathcal{L}_{\rho}(\ww_{k+1}) := \underline{\mathcal{L}_{\rho}} > -\infty.$
\end{enumerate}
\end{lemma}

\begin{proof}
\ref{lem_one_iter_ppal} 
The difference between two consecutive sequences of $\mathcal{L}_{\rho}$ can be divided into four parts:
\begin{subequations}
\begin{align}
\mathcal{L}_{\rho}(\ww_{k+1}) - \mathcal{L}_{\rho}(\ww_k) &= \left[  \mathcal{L}_{\rho}(\x_{k+1},\uu_{k},\z_{k},\la_{k},\m_{k}) -        \mathcal{L}_{\rho}(\ww_{k}) \right] 
\label{eq:lem_one_iter_ppal1} \\
& + [  \mathcal{L}_{\rho}(\x_{k+1},\uu_{k+1},\z_{k},\la_{k},\m_{k}) - \mathcal{L}_{\rho}(\x_{k+1},\uu_{k},\z_{k},\la_k,\m_k)] 
\label{eq:lem_one_iter_ppal2} \\
& + [ \mathcal{L}_{\rho}(\x_{k+1},\uu_{k+1},\z_{k},\la_{k+1},\m_{k+1}) - \mathcal{L}_{\rho}(\x_{k+1},\uu_{k+1},\z_{k},\la_{k},\m_{k}) ] 
\label{eq:lem_one_iter_ppal3} \\
& + \left[ \mathcal{L}_{\rho}(\ww_{k+1}) - \mathcal{L}_{\rho}(\x_{k+1},\uu_{k+1},\z_{k},\la_{k+1},\m_{k+1}) \right]. 
\label{eq:lem_one_iter_ppal4}
\end{align}
\end{subequations}
First, we consider \eqref{eq:lem_one_iter_ppal1}. Writing $\mathcal{L}_{\rho}(\x_{k+1})=\mathcal{L}_{\rho}(\x_{k+1},\uu_{k},\z_k,\la_k,\m_k)$,  and using Lemma \ref{lem_descent_lemma}, we have
\begin{equation} \label{eq:lem_one_iter_ppal_p1_1}
\begin{aligned} 
\ell_{\rho}(\x_{k+1}) & \leq \ell_{\rho}(\x_{k})
+ \left\langle \nabla_\x \ell_{\rho}(\x_k), \x_{k+1}- \x_k \right\rangle + \frac{L_{\ell}}{2} \| \x_{k+1} - \x_k \|^2.
\end{aligned}
\end{equation}   
From the definition of $\x_{k+1}$ in \eqref{eq:x_update_ppala}, it follows that
\begin{displaymath}
\begin{aligned}
  \mathcal{L}_{\rho}(\x_{k}) & \geq
\ell_{\rho}(\x_k) + \left\langle \nabla_x \ell_{\rho}(\x_k), \x_{k+1} - \x_k \right\rangle + \frac{1}{2\eta}\| \x_{k+1} - \x_k \|^2 + r(\x_{k+1}), 
\end{aligned}
\end{displaymath}
implying 
%\begin{multline}
    $\left\langle \nabla_x \ell_{\rho}(\x_k), \x_{k+1} - \x_k \right\rangle + r(\x_{k+1})  \leq -\frac{1}{2\eta}\| \x_{k+1} - \x_k\|^2 + r(\x_k).$
%\end{multline}
Combining the this expression with \eqref{eq:lem_one_iter_ppal_p1_1} yields 
\begin{equation} \label{eq:lem_one_iter_ppal_p1}
\begin{aligned}
\mathcal{L}_{\rho}(\x_{k+1},\uu_k,\z_k,\la_k,\m_k) 
    -  \mathcal{L}_{\rho}(\x_k,\uu_k,\z_k,\la_k,\m_k) \leq - \frac{1}{2}\left(\frac{1}{\eta} -L_{\ell} \right) \| \x_{k+1} - \x_k \|^2. 
\end{aligned}
\end{equation}

Next, consider the second part \eqref{eq:lem_one_iter_ppal2}. Noting that $\nabla_{\uu} \mathcal{L}_{\rho}$ is $\rho$-Lipschitz continuous, we have
\begin{align}
\mathcal{L}_{\rho}(\uu_{k+1})   
& \leq \mathcal{L}_{\rho}(\uu_{k}) 
 + \left\langle \nabla_{\uu} \mathcal{L}_{\rho}(\uu_{k}), \uu_{k+1} - \uu_{k} \right\rangle + \frac{\rho}{2}  \| \uu_{k+1} - \uu_{k} \|^2. \notag
\end{align}
By using the property of the projection operator, $\left\langle \Pi_{[0,U]}[\A] - \A, \bb - \Pi_{[0,U]}[\A] \right\rangle \geq 0 \ \text{for} \ \bb \in \Pi_{[0,U]}, \ \forall \A \in \mathbb{R}^m$, with $\A =\uu_k - \tau \nabla_{\uu} \mathcal{L}_{\rho}(\uu_{k})$, and $\bb=\uu_k$, we get  
\[
\left\langle \uu_{k+1}-\uu_{k} + \tau \nabla_{\uu} \mathcal{L}_{\rho}(\uu_{k}),  \uu_k - \uu_{k+1} \right\rangle \geq 0,\]
from which we have 
$\left\langle \nabla_{\uu} \mathcal{L}_{\rho}(\uu_{k}),  \uu_{k+1} - \uu_{k} \right\rangle \leq - \frac{1}{\tau} \| \uu_{k+1} - \uu_{k} \|^2.$
Therefore,
\begin{equation}
    \mathcal{L}_{\rho}(\x_{k+1},\uu_{k+1},\z_k,\la_k,\m_k) - \mathcal{L}_{\rho}(\x_{k+1},\uu_k,\z_k,\la_k,\m_k) \leq - \left( \frac{1}{\tau} - \frac{\rho}{2} \right)\| \uu_{k+1} - \uu_{k} \|^2.
\label{eq:lem_one_iter_ppal_p2} 
\end{equation}
Now consider \eqref{eq:lem_one_iter_ppal3}. We start by noting that
\begin{equation}
\begin{aligned}
& \quad \mathcal{L}_{\rho}(\x_{k+1},\uu_{k+1},\z_{k},\la_{k+1},\m_{k+1}) - \mathcal{L}_{\rho}(\x_{k+1},\uu_{k+1},\z_{k},\la_{k},\m_{k})  \\ 
& = \underbrace{\left\langle \la_{k+1} - \la_k, g(\x_{k+1}) + \uu_{k+1}  \right\rangle}_{\mathrm{(I)}} + \underbrace{\left\langle (\la_k - \m_k) - (\la_{k+1} - \m_{k+1}), \z_{k} \right\rangle}_{\mathrm{(II)}} \\
&  \quad - \frac{\beta}{2} \| \la_{k+1} - \m_{k+1} \|^2 + \frac{\beta}{2} \| \la_k - \m_k \|^2.  \label{eq:lem_one_iter_ppal_p3_1}
\end{aligned}
\end{equation}
Using the updates $\la_{k+1}  = \m_{k+1} + \rho (g(\x_{k+1}) + \uu_{k+1})$ and $\z_k = \frac{1}{\alpha}(\la_k - \m_k)$, and the fact that $\left\langle \A -\bb, \A \right\rangle =\frac{1}{2} \| \A -\bb \|^2 + \frac{1}{2} \| \A \|^2 - \frac{1}{2} \| \bb \|^2$  with $\A= \la_{k} -\m_{k}$ and $\bb=\la_{k+1} -\m_{k+1}$, we have
\begin{equation}  
\begin{aligned}
\mathrm{(I)} & = \frac{1}{2\rho} \| \la_{k+1}-\la_k \|^2 
 + \frac{1}{2\rho} \| \la_{k+1} -\m_{k+1} \|^2 - \frac{1}{2\rho} \| \m_{k+1} -\la_k \|^2, \\
\mathrm{(II)} & =  \frac{1}{2\alpha} \| (\la_{k+1} - \m_{k+1})-(\la_k - \m_k) \|^2 
 + \frac{1}{2\alpha} \| \la_{k} -\m_{k} \|^2 - \frac{1}{2\alpha} \| \la_{k+1} -\m_{k+1} \|^2 \\
& =  \frac{\alpha}{2} \| \z_{k+1} - \z_{k} \|^2 
 + \frac{1}{2\alpha} \| \la_{k} -\m_{k} \|^2 - \frac{1}{2\alpha} \| \la_{k+1} -\m_{k+1} \|^2. 
\label{eq:lem_one_iter_ppal_p3_2}
\end{aligned}
\end{equation}
Substituting \eqref{eq:lem_one_iter_ppal_p3_2} into \eqref{eq:lem_one_iter_ppal_p3_1} yields
\begin{equation}
\begin{aligned}
& \mathcal{L}_{\rho} (\x_{k+1},\uu_{k+1},\z_k,\la_{k+1},\m_{k+1}) - \mathcal{L}_{\rho} (\x_{k+1},\uu_{k+1},\z_k,\la_k,\m_k) \\
& \leq 
\frac{1}{2\rho} \| \la_{k+1} - \la_k \|^2 
- \frac{1}{2\rho} \| \m_{k+1} - \la_k \|^2 + \frac{1}{2\rho} \| \la_k - \m_k \|^2 + \frac{\alpha}{2} \| \z_{k+1} - \z_{k} \|^2 \\
& \overset{(\mathrm{i})}{\leq}  
\frac{1}{2\rho} \left( 3 \rho^2 M_g^2  \| \x_{k+1} - \x_k \|^2 + 3 \rho^2 \| \uu_{k+1} - \uu_k \|^2 + 3 \|\m_{k+1} -\m_{k} \|^2 \right) \\
& \quad + \frac{1}{2\rho}\left( 2\sigma_k - \sigma_k^2 \right) \| \la_k - \m_k \|^2  +\frac{\alpha}{2} \|  \z_{k+1} - \z_{k} \|^2  \\
& \overset{(\mathrm{ii})}{\leq} 
\frac{1}{2} \left( 3\rho M_g^2 \| \x_{k+1} - \x_k \|^2 + 3\rho \| \uu_{k+1} - \uu_k \|^2 \right) \\ 
& \quad + \frac{1}{2\rho}\left( 2\sigma_k + 2\sigma_k^2 \right) \| \la_k - \m_k \|^2 +\frac{\alpha}{2} \| \z_{k+1} - \z_{k} \|^2  \\ 
& \overset{(\mathrm{iii})}{\leq} 
\frac{1}{2}  \left( 3\rho M_g^2 \| \x_{k+1} - \x_k \|^2 + 3\rho \| \uu_{k+1} - \uu_k \|^2 \right) + \frac{\delta_k^2}{4\rho} + \frac{\delta_k}{\rho} +\frac{\alpha}{2} \| \z_{k+1} - \z_{k} \|^2, \label{eq:lem_one_iter_ppal_p3}
\end{aligned}	
\end{equation}
where $(\text{i})$ follows from \eqref{eq:lem_iter_rel_2}  and \eqref{eq:lem_iter_rel_3} in Lemma \ref{lem_iterates_relations}; $(\text{ii})$ follows from \eqref{eq:lem_iter_rel_1-1} in Lemma \ref{lem_iterates_relations}; and
$(\text{iii})$ is from \eqref{eq:lem_iter_rel_1-1} and \eqref{eq:lem_iter_rel_1-2} in Lemma \ref{lem_iterates_relations}.
\vspace{0.01in}

Lastly, we consider \eqref{eq:lem_one_iter_ppal4}. Write down $\mathcal{L}_{\rho}(\z_{k+1}) = \mathcal{L}_{\rho}(\x_{k+1},\uu_{k+1},\z_{k+1},\la_{k+1},\m_{k+1})$ for notational simplicity. From the $\alpha$-strong convexity of $\mathcal{L}_{\rho}$ in $\z$, we have
\begin{align} 
\mathcal{L}_{\rho} (\z_{k}) & \geq \mathcal{L}_{\rho} (\z_{k+1}) + \left\langle \nabla_{\z}\mathcal{L}_{\rho}(\z_{k+1}), \z_{k} - \z_{k+1} \right\rangle + \frac{\alpha}{2}\| \z_{k+1} - \z_k \|^2. \notag     	
\end{align}
Since  $\z_{k+1}$ minimizes $\mathcal{L}_{\rho}(\x_{k+1},\uu_{k+1},\z,\la_{k+1},\m_{k+1})$, we have that $\nabla_{\z}\mathcal{L}_{\rho}(\z_{k+1}) = \0$.  Thus,
\begin{align} 
\mathcal{L}_{\rho} (\z_{k+1}) - \mathcal{L}_{\rho} (\z_{k})
\leq - \frac{\alpha}{2}\| \z_{k+1} - \z_k \|^2.  \label{eq:lem_one_iter_ppal_p4}
\end{align}
Combining \eqref{eq:lem_one_iter_ppal_p1}, \eqref{eq:lem_one_iter_ppal_p2}, \eqref{eq:lem_one_iter_ppal_p3}, and \eqref{eq:lem_one_iter_ppal_p4} yields the desired result.

\ref{lem_lagrangian_convergence}
By using the update of $\z_{k+1}=\frac{\la_{k+1} - \m_{k+1}}{\alpha}$, we deduce
\[\begin{aligned} 
\mathcal{L}_{\rho}(\ww_{k+1}) & = f(\x_{k+1})  + \left\langle \la_{k+1}, g(\x_{k+1}) + \uu_{k+1} \right\rangle \\
& \quad \ \underbrace{- \frac{1}{2\rho}\| \la_{k+1} - \m_{k+1} \|^2 + \frac{\rho}{2}\| g(\x_{k+1}) + \uu_{k+1}  \|^2}_{=0} + r(\x_{k+1}) \\
& = f(\x_{k+1}) + \frac{1}{2\rho}\|  \la_{k+1} \|^2 + \frac{1}{2\rho}\| \la_{k+1} - \m_{k+1} \|^2 - \frac{1}{2\rho}\|  \m_{k+1} \|^2 + r(\x_{k+1})  > -\infty,        
\end{aligned}\]	
where the last inequality holds by the boundedness of $\{\m_k\}$ (Assumption \ref{assumption_bounded_multiplier}) and the lower boundedness of $f$ and $r$ over  $\text{dom}(r)$ (Assumption \ref{assumption_bounded_domain}). Given the step sizes $0 < \eta < {1}/({L_{\ell} +  3\rho M_g^2})$ and $0< \tau < 1/{2\rho}$, we already know the sequence $\{\mathcal{L}_{\rho}(\ww_{k+1})\}$ is {approximately nonincreasing} (Lemma \ref{lem_ppal_decrease_converge}\ref{lem_one_iter_ppal}); Although it may not decrease monotonically at every step, it tends to decrease over iterations. As $\{\delta_k\}$ goes to 0 as $k \rightarrow  \infty$,  $\{\mathcal{L}_{\rho}(\ww_{k+1})\}$ converges to a finite value $\underline{\mathcal{L}_{\rho}}> - \infty$.
\end{proof}

\begin{lemma} [Subgradient Error Bound] \label{lem_upper_bound_primal_gradient}
Let the sequence $\{\ww_k := (\x_{k},\uu_{k},\z_{k},\la_{k},\m_{k})\}$ be generated by Algorithm \ref{alg:ppala}, and let $\{\pp_k := (\x_k,\uu_{k},\z_{k})\}$ be the primal sequence. Under Assumptions \ref{assumption_bounded_domain}, \ref{assumption_lipschitz_f} and \ref{assumption_lipschitz_g}, there exists a constant $d_1 > 0$ such that for the primal subgradient $\boldsymbol{\zeta}_{\pp}^{k+1}=(\zeta_{\x}^{k+1},\zeta_{\uu}^{k+1},\0)  \in \partial_{\pp} \mathcal{L}_{\rho} (\ww_{k+1})$,
\begin{align} 
\| \boldsymbol{\zeta}_{\pp}^{k+1} \| \leq 
d_1 \left( \| \x_{k+1} - \x_k \| + \| \uu_{k+1} -\uu_k \| \right) +  (M_g + 1) \delta_k, \notag
\end{align}
where
\begin{equation}
d_1 = \max \left\lbrace L_f + B_{\la} L_g  + \rho ( M_g +  L_g( B_g +  B_{\uu} ) + 2M_g^2) + {1}/{\eta}, \  2\rho( M_g + 1) + {1}/{\tau}\right\rbrace. \notag
\end{equation}
\end{lemma}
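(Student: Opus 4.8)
The plan is to exhibit an explicit element of $\partial_{\pp}\mathcal{L}_{\rho}(\ww_{k+1})$ assembled from the first-order optimality conditions of the three primal updates of Algorithm~\ref{algorithm1}, and then to bound its norm block by block. The $\z$-block is immediate: since $\z_{k+1}$ is the \emph{exact} minimizer of $\z\mapsto\mathcal{L}_{\rho}(\x_{k+1},\uu_{k+1},\z,\la_{k+1},\m_{k+1})$ over $\mathbb{R}^{m}$, the update~\eqref{eq:z_update} gives $\nabla_{\z}\mathcal{L}_{\rho}(\ww_{k+1})=-\la_{k+1}+\m_{k+1}+\alpha\z_{k+1}=\0$, so the third component of $\boldsymbol{\zeta}_{\pp}^{k+1}$ may be taken to be $\0$.

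For the $\x$-block, the optimality condition of the proximal step~\eqref{eq:x_update} yields a subgradient $s_{k+1}:=-\nabla_{\x}\ell_{\rho}(\x_k,\uu_k,\z_k,\la_k,\m_k)-\tfrac1\eta(\x_{k+1}-\x_k)\in\partial r(\x_{k+1})$. Because $\nabla_{\x}\ell_{\rho}$ depends only on $(\x,\uu,\la)$, the point $\zeta_{\x}^{k+1}:=\nabla_{\x}\ell_{\rho}(\x_{k+1},\uu_{k+1},\la_{k+1})+s_{k+1}$ lies in $\nabla_{\x}\ell_{\rho}(\ww_{k+1})+\partial r(\x_{k+1})=\partial_{\x}\mathcal{L}_{\rho}(\ww_{k+1})$ and equals $\nabla_{\x}\ell_{\rho}(\x_{k+1},\uu_{k+1},\la_{k+1})-\nabla_{\x}\ell_{\rho}(\x_{k},\uu_{k},\la_{k})-\tfrac1\eta(\x_{k+1}-\x_k)$. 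I would bound the gradient difference by splitting $\nabla_{\x}\ell_{\rho}=\nabla f+\nabla g\,\la+\rho\,\nabla g\,(g+\uu)$, inserting intermediate terms, and invoking Assumptions~\ref{assumption_lipschitz}--\ref{assumption_bounded_domain} together with $\|\la_k\|\le B_{\la}$ (Lemma~\ref{lem_bounded_mutipliers}), $\|\uu_k\|\le B_{\uu}$, $\|g(\x_k)\|\le B_g$, $\|\nabla g(\x_k)\|\le M_g$; this is essentially the computation inside the proof of Lemma~\ref{lem_descent_lemma}, and it yields the bound $L_{\ell}\|\x_{k+1}-\x_k\|+M_g\|\la_{k+1}-\la_k\|+\rho M_g\|\uu_{k+1}-\uu_k\|$ with the same $L_{\ell}$, the last two terms arising from the change in the second and third arguments. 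Hence $\|\zeta_{\x}^{k+1}\|\le(L_{\ell}+\tfrac1\eta)\|\x_{k+1}-\x_k\|+M_g\|\la_{k+1}-\la_k\|+\rho M_g\|\uu_{k+1}-\uu_k\|$.

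For the $\uu$-block, the projection optimality of~\eqref{eq:u_update} gives $n_{k+1}:=-\tfrac1\tau(\uu_{k+1}-\uu_k)-\nabla_{\uu}\mathcal{L}_{\rho}(\x_{k+1},\uu_k,\la_k)\in N_{[0,U]}(\uu_{k+1})$, so $\zeta_{\uu}^{k+1}:=\nabla_{\uu}\mathcal{L}_{\rho}(\ww_{k+1})+n_{k+1}\in\partial_{\uu}\mathcal{L}_{\rho}(\ww_{k+1})$ (the $\uu$-block subdifferential including the normal cone of the box $[0,U]$); since $\nabla_{\uu}\mathcal{L}_{\rho}=\la+\rho(g(\x)+\uu)$, the $g(\x_{k+1})$ terms cancel, leaving $\zeta_{\uu}^{k+1}=(\la_{k+1}-\la_k)+\rho(\uu_{k+1}-\uu_k)-\tfrac1\tau(\uu_{k+1}-\uu_k)$, hence $\|\zeta_{\uu}^{k+1}\|\le\|\la_{k+1}-\la_k\|+(\rho+\tfrac1\tau)\|\uu_{k+1}-\uu_k\|$. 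The quantity $\|\la_{k+1}-\la_k\|$ appearing in both blocks is controlled using the intermediate estimate from the proof of Lemma~\ref{lem_iterates_relations}, $\|\la_{k+1}-\la_k\|\le\|\m_{k+1}-\m_k\|+\rho M_g\|\x_{k+1}-\x_k\|+\rho\|\uu_{k+1}-\uu_k\|$, together with $\|\m_{k+1}-\m_k\|\le\delta_k/2$ from~\eqref{eq:lem_iter_rel_1-1}. Finally, combining the two blocks via $\|(\A,\bb,\0)\|\le\|\A\|+\|\bb\|$, collecting separately the coefficients of $\|\x_{k+1}-\x_k\|$ and of $\|\uu_{k+1}-\uu_k\|$ (which come out to $L_f+B_{\la}L_g+\rho(M_g+L_g(B_g+B_{\uu})+2M_g^2)+\tfrac1\eta$ and $2\rho(M_g+1)+\tfrac1\tau$, respectively), and taking $d_1$ to be the larger of the two reproduces the stated constant; the leftover $\delta_k$-terms sum to $\tfrac{M_g+1}{2}\delta_k\le(M_g+1)\delta_k$.

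The one step requiring care is the choice of representatives in each block: the subgradient $s_{k+1}\in\partial r(\x_{k+1})$ and the normal-cone element $n_{k+1}\in N_{[0,U]}(\uu_{k+1})$ must be selected so that the ``stale'' gradient evaluations hard-wired into the update rules ($\nabla_{\x}\ell_{\rho}$ at $\ww_k$ and $\nabla_{\uu}\mathcal{L}_{\rho}$ at $(\x_{k+1},\uu_k,\la_k)$) cancel against $\nabla\mathcal{L}_{\rho}(\ww_{k+1})$, so that only differences that are Lipschitz-controlled remain; once this cancellation is in place, the remainder is triangle-inequality bookkeeping resting on the boundedness of the multipliers (Lemma~\ref{lem_bounded_mutipliers}) and of $g$ and $\nabla g$, plus the observation that the $\z$-component is \emph{exactly} zero.
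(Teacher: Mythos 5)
Your proposal is correct, and for the $\x$- and $\z$-blocks it is exactly the paper's argument: the same subgradient representative $\zeta_{\x}^{k+1}=\nabla_{\x}\ell_{\rho}(\ww_{k+1})-\nabla_{\x}\ell_{\rho}(\ww_{k})+\tfrac{1}{\eta}(\x_{k}-\x_{k+1})$ from the prox-step optimality, the same Lipschitz/boundedness splitting as in Lemma \ref{lem_descent_lemma}, the same use of $\|\la_{k+1}-\la_k\|\le\|\m_{k+1}-\m_k\|+\rho M_g\|\x_{k+1}-\x_k\|+\rho\|\uu_{k+1}-\uu_k\|$, and the same observation that the $\z$-component vanishes exactly. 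The only genuine divergence is the $\uu$-block: the paper measures $\uu$-stationarity by the projected-gradient mapping $\zeta_{\uu}^{k+1}=\uu_{k+1}-\Pi_{[0,U]}[\uu_{k+1}-\nabla_{\uu}\mathcal{L}_{\rho}(\ww_{k+1})]$ and bounds it by playing two variational inequalities (for $\widetilde{\uu}_{k+1}$ and for $\uu_{k+1}$) against each other plus Cauchy--Schwarz, whereas you exhibit an explicit element of $\nabla_{\uu}\mathcal{L}_{\rho}(\ww_{k+1})+N_{[0,U]}(\uu_{k+1})$ and exploit the exact cancellation of the $g(\x_{k+1})$ terms, giving the closed form $(\la_{k+1}-\la_k)+(\rho-\tfrac{1}{\tau})(\uu_{k+1}-\uu_k)$. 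Your route is shorter and yields slightly tighter bookkeeping (your $\delta_k$-terms sum to $\tfrac{M_g+1}{2}\delta_k$), and the collected coefficients indeed reproduce the stated $d_1$; the paper's route has the advantage that its residual is literally the quantity used in the post-lemma fixed-point characterization $\overline{\uu}=\Pi_{[0,U]}[\overline{\uu}-(\overline{\la}+\rho(g(\overline{\x})+\overline{\uu}))]$, though the two measures are equivalent for that purpose (any element of the gradient-plus-normal-cone controls the projection residual). One cosmetic caveat applies equally to you and to the paper: since $\mathcal{L}_{\rho}$ does not contain an indicator of $[0,U]$, neither residual is literally a member of $\partial_{\uu}\mathcal{L}_{\rho}(\ww_{k+1})=\{\nabla_{\uu}\mathcal{L}_{\rho}(\ww_{k+1})\}$; if you adopt your normal-cone convention you should say so explicitly (as you do), and the downstream use in Theorem \ref{thm_stationarity} goes through unchanged.
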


\begin{proof}
From the proof of Lemma \ref{lem_upper_bound_primal_gradient_plada}, we have that for all $k \geq 0$
\begin{equation}
\begin{gathered}
\zeta_{\x}^{k+1} := \nabla_{\x} \ell_{\rho} (\ww_{k+1}) -\nabla_{\x} \ell_{\rho} (\ww_{k})+ \frac{1}{\eta}(\x_k - \x_{k+1}) \in \partial_{\x} \mathcal{L}_{\rho}(\ww_{k+1}), \notag \\
\zeta_{\uu}^{k+1} := \uu_{k+1} - \Pi_{[0,U]}[\uu_{k+1}  - (\la_{k+1} + \rho (g(\x_{k+1}) + \uu_{k+1})] = \widetilde{\nabla}_{\uu} \mathcal{L}_{\rho}(\ww_{k+1}), \notag \\
\nabla_{\z}\mathcal{L}_{\rho}(\ww_{k+1}) = \alpha \z_{k+1} - (\la_{k+1} -\m_{k+1}) = \0. \notag  
\end{gathered}
\end{equation}
Hence, we obtain 
\begin{equation} \label{eq:sub_gradient}
\boldsymbol{\zeta}_{\pp}^{k+1} := 
\begin{pmatrix}
\zeta_{\x}^{k+1} & \in  \partial_{\x}\mathcal{L}_{\rho}(\x_{k+1},\uu_{k+1},\z_{k+1},\la_{k+1},\m_{k+1}) \\
\zeta_{\uu}^{k+1} & = \widetilde{\nabla}_{\uu} \mathcal{L}_{\rho}(\x_{k+1},\uu_{k+1},\z_{k+1},\la_{k+1},\m_{k+1})   \\ 
\0 & =  \nabla_{\z}\mathcal{L}_{\rho}(\x_{k+1},\uu_{k+1},\z_{k+1},\la_{k+1},\m_{k+1})  \\ 
\end{pmatrix}. \notag
\end{equation}

We derive upper estimates for $\zeta_{\x}^{k+1}$ and $\zeta_{\uu}^{k+1}$. A straightforward calculation yields
\begin{subequations}
\begin{align}
\| \zeta_{\x}^{k+1} \| 
\leq & \| \nabla f(\x_{k+1}) -\nabla f(\x_k) \| + (1/{\eta}) \| \x_k - \x_{k+1} \| \notag \\
     & + \| \nabla g(\x_{k+1})(\la_{k+1} + \rho (g(\x_{k+1}) + \uu_{k+1}) - \nabla g(\x_k) (\la_{k} + \rho (g(\x_{k}) + \uu_{k}) \| \notag \\
\leq &  (L_f + 1/\eta) \| \x_{k+1} - \x_{k} \| \notag \\
     & + \|  \nabla g(\x_{k+1}) \la_{k+1} -\nabla g(\x_k) \la_{k+1} + \nabla g(\x_k) \la_{k+1} - \nabla g(\x_k) \la_k \| \label{eq:xi_x1} \\
     & + \rho \| \nabla g(\x_{k+1})g(\x_{k+1}) - \nabla g(\x_{k})g(\x_{k+1}) + \nabla g(\x_{k})g(\x_{k+1})-  \nabla g(\x_k) g(\x_{k}) \|  \label{eq:xi_x2} \\
     & + \rho \| \nabla g(\x_{k+1}) \uu_{k+1} - \nabla g(\x_{k}) \uu_{k+1} + \nabla g(\x_{k}) \uu_{k+1} - \nabla g(\x_k)  \uu_{k} \|,  \label{eq:xi_x3}      
\end{align}
\end{subequations}
in which \eqref{eq:xi_x1}, \eqref{eq:xi_x2}, and \eqref{eq:xi_x3} can be bounded by   
\begin{equation}
\begin{aligned} 
\eqref{eq:xi_x1}  
& \leq B_{\la} L_g \| \x_{k+1} - \x_k \|  + M_g \| \la_{k+1} - \la_k \| \notag \\
& \leq B_{\la} L_g \| \x_{k+1} - \x_k \| + \rho M_g^2 \| \x_{k+1} - \x_k \| \notag \\ & \quad + \rho M_g \| \uu_{k+1} - \uu_k \| + M_g \| \m_{k+1} - \m_k \| \notag \\
& \leq \left(B_{\la} L_g  + \rho M_g^2 \right) \| \x_{k+1} - \x_k \| \notag \\ & \quad + \rho M_g \| \uu_{k+1} - \uu_k \|+  M_g {\delta_k}; \notag \\
\eqref{eq:xi_x2} 
& \leq  (\rho B_g L_g + \rho M_g^2) \| \x_{k+1} - \x_{k} \|; \notag \\
\eqref{eq:xi_x3}
& \leq \rho B_{\uu} L_g \| \x_{k+1} - \x_{k} \| + \rho M_g \| \uu_{k+1} - \uu_{k} \|.  \notag
\end{aligned}
\end{equation}
where for bounding \eqref{eq:xi_x1}, we used the $\la$-update and $\| \m_{k+1} - \m_k \| = \frac{\delta_k}{\| \la_k - \m_k \| + \frac{1}{\| \la_k - \m_k \|}} \leq {\delta_k}.$ 
Hence,
\begin{equation}
\begin{aligned}
\| \zeta_{\x}^{k+1} \| &\leq  (L_f + {1}/{\eta}  + B_{\la} L_g  + \rho L_g ( B_g + B_{\uu} + 2M_g^2) ) \| \x_{k+1} -\x_{k} \| \notag \\
&\quad+ 2\rho M_g \| \uu_{k+1} - \uu_{k} \| + M_g {\delta_k}. \label{eq:sub_gradient_x}
\end{aligned}
\end{equation}

Next, we estimate an upper bound for the component $\zeta_{\uu,k+1}$. From the proof of Lemma \ref{lem_upper_bound_primal_gradient_plada}, we have
\begin{multline} 
\left( \|\nabla_{\uu}\mathcal{L}_{\rho}(\uu_{k}) - \nabla_{\uu}\mathcal{L}_{\rho}(\uu_{k+1})\| + {\tau}^{-1} \| \uu_{k+1} - \uu_{k}\|\right) \cdot \| \widetilde{\uu}_{k+1} - \uu_{k+1} \| \geq \| \widetilde{\uu}_{k+1} - \uu_{k+1}\|^2, \notag
\end{multline}
where
\begin{equation}
\begin{aligned}
\|\nabla_{\uu}\mathcal{L}_{\rho}(\uu_{k}) - \nabla_{\uu}\mathcal{L}_{\rho}(\uu_{k+1})\| &= \|\nabla_{\uu}\mathcal{L}_{\rho}(\x_{k+1},\uu_{k}, \z_{k},\la_{k},\m_{k}) \notag \\ & \hspace{0.5in}  - \nabla_{\uu}\mathcal{L}_{\rho}(\x_{k+1},\uu_{k+1},\z_{k+1},\la_{k+1},\m_{k+1})\| \notag \\
&\leq \| \la_{k} + \rho (g(\x_{k+1}) + \uu_{k})  - \la_{k+1} -\rho (g(\x_{k+1}) + \uu_{k+1}) \| \notag \\
& \leq \rho ( M_g\| \x_{k+1} - \x_k \| + 2\| \uu_{k+1} - \uu_k \| + \delta_k ). \notag
\end{aligned}
\end{equation}
Therefore, 
\begin{align} 
\| \zeta_{\uu}^{k+1} \|  = \| \widetilde{\uu}_{k+1} - \uu_{k+1}\| \leq \rho M_g\| \x_{k+1} - \x_k \| + \left(2 \rho + {\tau}^{-1} \right) \|\uu_{k+1} - \uu_{k}\| + \delta_k. \label{eq:sub_gradient_u}
\end{align}
Combining \eqref{eq:sub_gradient_x} and \eqref{eq:sub_gradient_u}, we obtain 
\begin{equation}
\| \boldsymbol{\zeta}_{\pp}^{k+1}\|
\leq d_1 (\|\x_{k+1} - \x_k \| + \|\uu_{k+1} - \uu_k \|) + (M_g + 1) \delta_k, \notag
\end{equation}
where 
$d_1 = \max \{ L_f + B_{\la} L_g  + \rho ( M_g + B_g L_g + B_{\uu} L_g + 2M_g^2) + {1}/{\eta},   2\rho (M_g +1) + {1}/{\tau} \}.$
This inequality, combined with $\boldsymbol{\zeta}_{\pp}^{k+1} \in \partial_{\pp} \mathcal{L}_{\rho}(\ww_{k+1})$, yields the desired result.
\end{proof}
It can be easily verified that if $\frac{1}{T}\sum_{k=0}^{T-1}\| \boldsymbol{\zeta}_{\pp}^{k+1} \| \rightarrow 0$, then a point that satisfies stationarity in the KKT conditions \eqref{eq:def_kkt}, 
\begin{equation} \label{eq:kkt_stationary}
\0 \in \nabla f(\x^\ast) + \partial r(\x^\ast) + \nabla g(\x^\ast) \la^\ast, \notag
\end{equation}
is obtained. Specifically,
\begin{equation} 
\begin{aligned}
& \begin{cases}
    \0 \in \nabla f(\overline{\x}) + \partial r(\overline{\x}) + \nabla g(\overline{\x}) \overline{\la}, \\  
    \0 = \overline{\uu} - \Pi_{\mathbb{R}_+^m} [\overline{\uu} - (\overline{\la} + \rho (g(\overline{\x}) + \overline{\uu})],
\end{cases} \\ 
\Longleftrightarrow 
& \quad \0 \in \nabla f(\overline{\x}) + \partial r(\overline{\x}) + \nabla g(\overline{\x}) \overline{\la}.
\notag
\end{aligned}
\end{equation}
We will use this part to establish convergence to a KKT point in Theorem \ref{thm_kkt_ppala}. Note that we need not consider the gradient of $\mathcal{L}_\rho$ with respect to $\la$, i.e., $\xi_{\la}^{k+1} := \nabla_{\la} \mathcal{L}_\rho(\mathbf{w}_{k+1})$, since we know from the $\la$-update step \eqref{eq:lambda_update} that ${\nabla}_{\la}\mathcal{L}_{\rho}(\ww_{k+1}) = g(\x_{k+1}) + \uu_{k+1} - \z_{k+1} - \beta(\la_{k+1} -\m_{k+1}) = \0.$ 

\section{Proofs of Asymptotic Convergence for Algorithm \ref{alg:ppala}}

\subsection{Proof of Primal Stationarity (Lemma \ref{proof_lem_primal_convergence_ppala})} \label{proof_lem_primal_convergence_ppala}

\begin{proof}
Analogous to the proof of Lemma \ref{lem_primal_convergence_plada} in Section \ref{proof_lem_primal_convergence_plada}, by using Lemma \ref{lem_ppal_decrease_converge} and \ref{lem_upper_bound_primal_gradient} with $\widehat{\delta}_k = \frac{\delta_k^2}{4{\rho}} + \frac{\delta_k}{\rho}$, we have
\begin{align}
\frac{1}{T}\sum_{k=0}^{T-1} \| \boldsymbol{\zeta}_{\pp}^{k+1} \|^2 \leq \frac{\frac{3d_1^2}{c_3}\left(\mathcal{L}_{\rho}(\ww_0) - \underline{\mathcal{L}_{\rho}}\right)}{T} + \frac{ \left( \frac{3 d_1^2}{4 \rho c_3} + 3 (M_g+1)^2 \right) \sum_{k=0}^{T-1} {\delta}_k^2}{T} + \frac{\frac{1}{\rho}\sum_{k=0}^{T-1}\delta_k}{T}. \label{eq:lem_stationarity_ppala}
\end{align}
Given $\delta_k = \frac{1}{p \cdot k^q + 1}$ with $2/3 < q \leq 1$ and $p > 0$, the third term on the RHS of the above inequality dominates the second term. Moreover, for sufficiently large $T$, one can easily show that
\begin{align}
&  \sum_{k=0}^{T-1}\delta_k
\approx \begin{cases} 
p^{-1}\log(pT)  & \text{if } \  q = 1, \\
 (p - qp)^{-1}  T^{1 - q} & \text{if } \  \frac{2}{3} < q < 1. \notag 
\end{cases}
\end{align}
Thus, for $q=1$, the sum grows logarithmically, while for $2/3 < q < 1$, the sum grows polynomially with $T$.
Therefore, for each choice of $q$, the RHS of \eqref{eq:lem_stationarity_ppala} goes to 0 as $T$ increases, 
which proves that the primal sequences are convergent. 
\end{proof}

\subsection{Proof of Primal Feasibility (Lemma \ref{lem_rate_primal_feasibility_ppala})}

\begin{proof}
Follow the same proof as that of Lemma \ref{lem_feasibility_plada} in Section \ref{proof_lem_rate_primal_feasibility_plada}.
\end{proof}

\subsection{Proof of Dual Feasibility (Lemma \ref{lem_dual_feasibility_ppala})}

\begin{proof}
The proof is analogous to that of Lemma \ref{lem_dual_feasibility_plada} in Section \ref{proof_lem_dual_feasibility_plada}. By the update rule \eqref{eq:u_update_ppala}, $\uu\in\mathbb{R}_+^m$ and $\bar{\uu}\ge0$. Thus, there are two cases for $\bar{\uu}>0$ and $\bar{\uu}=0$. If $\bar{\uu}>0$, by Lemma \ref{lem_feasibility_ppala} and the stationarity of $\bar{\uu}$ and $\bar{\la}$, $\bar{\la} + \rho(g(\bar{\x})+\bar{\uu}) = \bar{\la} = 0$.
Similarly, if $\bar{\uu}=0$, $\bar{\la} + \rho(g(\bar{\x})+\bar{\uu}) = \bar{\la} \ge 0$. In any case, we conclude that $\bar{\la}\ge0$.
\end{proof}

\subsection{Proof of Complementary Slackness (Lemma \ref{lem_complementary_slackness_ppala})}

\begin{proof}
It follows the same proof as that of Lemma \ref{lem_complementary_slackness_plada} in Section \ref{proof_lem_complementary_slackness_plada}.
\end{proof}

\section{Proofs of Non-asymptotic Convergence Rate of Algorithm \ref{alg:ppala}}

\subsection{Proof of Primal Stationarity Convergence Rate (Lemma \ref{lem_rate_primal_stationarity_ppala})}

\begin{proof}
The proof is analogous to that of Lemma \ref{lem_rate_primal_stationarity_plada} using Lemma \ref{lem_stationarity_ppala} and \eqref{eq:nu_ppala}. By triangle inequality, Jensen's inequality
and the result from Lemma \ref{lem_feasibility_ppala},
\begin{align}
    \bar{\boldsymbol{\nu}}_T - \bar{\la}_T &= \frac{1}{T}\sum_{k=0}^{T-1} \left(\frac{1}{\tau}-\rho\right)(\uu_{k+1} - \uu_k) + \la_{k+1}-\m_{k+1} \\
    &\le \frac{1-\tau\rho}{\tau T}(\uu_T - \uu_0) + \frac{1}{T}\sum_{k=0}^{T-1} \|\la_{k+1}-\m_{k+1}\| \\
    &= \mathcal{O}(1/T) + \mathcal{O}(1/\sqrt{T}) = \mathcal{O}(1/\sqrt{T}).
\end{align}
Therefore, the stationarity condition holds for $\bar{\boldsymbol{\nu}}_T$ with the same convergence rate.
\end{proof}

\subsection{Proof of Primal Feasibility Convergence Rate (Lemma \ref{lem_rate_primal_feasibility_ppala})}

\begin{proof}
The proof is analogous to that of Lemma \ref{lem_rate_primal_feasibility_plada} using Lemma \ref{lem_feasibility_ppala}.
\end{proof}

\subsection{Proof of Complementary Slackness Convergence Rate (Lemma \ref{lem_rate_complementary_slackness_ppala})}

\begin{proof}
From the optimality of $\uu$-update \eqref{eq:u_update_ppala} and the construction of $\boldsymbol{\nu}$ \eqref{eq:nu_ppala}, we have $\uu_{k+1} = \Pi_{\mathbb{R}_+^m}[\uu_{k+1} - \tau \boldsymbol{\nu}_{k}]$, which implies the complementarity condition $\langle \boldsymbol{\nu}_k, \uu - \uu_{k+1} \rangle \ge 0$ for all $\uu \ge \0$. The rest of the proof is the same as that of Lemma \ref{lem_rate_complementary_slackness_plada}.
\end{proof}

\section{Proof of Corollary \ref{cor1}} \label{proof_cor1}

\begin{proof}
By using  Jensen's inequality, $\left(\frac{1}{T} \sum_{k=0}^{T-1} \| \boldsymbol{\zeta}_{\pp}^{k+1} \|\right)^2 \leq \frac{1}{T} \sum_{k=0}^{T-1} \| \boldsymbol{\zeta}_{\pp}^{k+1}\|^2$, and taking the square root, we obtain
\begin{equation}
    \frac{1}{T} \sum_{k=0}^{T-1} \| \boldsymbol{\zeta}_{\pp}^{k+1} \| \leq \frac{1}{\sqrt{T}} \sqrt{\sum_{k=0}^{T-1} \| \boldsymbol{\zeta}_{\pp}^{k+1} \|^2}. \notag
\end{equation}
Denoting the RHS of inequality \eqref{eq:lem_stationarity_ppala} by $\Delta_{T}$ and combining Lemma \ref{lem_stationarity_ppala} with the above inequality give
\begin{equation}
    \frac{1}{T} \sum_{k=0}^{T-1} \| \boldsymbol{\zeta}_{\pp}^{k+1} \| \leq \frac{\sqrt{\Delta_T}}{\sqrt{T}} \leq \epsilon, \notag
\end{equation}
which, along with the result in \eqref{eq:primal_converge_rate}, gives $\widetilde{\mathcal{O}}\left(\frac{1}{\sqrt{T}}\right)$. Therefore, the following number of iterations is required to have $\epsilon$-primal stationarity:
$$T := \left\lceil \frac{\Delta_T}{\epsilon^2} \right\rceil  = \widetilde{\mathcal{O}}\left(\frac{1}{\epsilon^2}\right).$$
\end{proof}

\section{Experimental Details}

\subsection{Classification Problems Under Non-smooth Fairness Constraints}

\subsubsection*{Experimental Setup} We benchmark PLADA against four state-of-the-art algorithms: the single-loop switching subgradient (SSG) algorithm \cite{huang2023oracle}, two double-loop inexact proximal point (IPP) algorithms (IPP-ConEx \cite{boob2022stochastic} and IPP-SSG \cite{huang2023oracle}) and the multiplier model approach \cite{narasimhan2020approximate}. For the benchmark algorithms, we followed the hyperparameter settings of \cite{huang2023oracle} and \cite{narasimhan2020approximate} and we provide detailed descriptions of hyperparameters in Table \ref{tab:hyperparameter}. Note that we only used two hyper-parameter sets for 8 different experiments, while our benchmark algorithms used different hyper-parameters for every datasets, objectives and constraints.

\begin{table}[htbp]
\footnotesize
\caption{Hyper-parameters of PLADA used in experiments}
\centering
\begin{tabular}{c|c c c c c}
    \hline
    Problem & $\eta_w$ & $\eta_u$ & $\alpha$ & $\beta$ & $\gamma_0$\\
    \hline
    Models \ref{sec:exp_dp} and \ref{sec:exp_eo}  & 0.001 & 0.1 & 10.0 & 0.1 & 0.1 \\
    Neural network \ref{sec:exp_nn_if} & 0.1 & 0.01 & 10.0 & 0.5 & 0.1 \\
    \hline
\end{tabular}
\label{tab:hyperparameter}
\end{table}

\subsubsection*{Datasets} Our evaluation uses several standard real-world datasets:  \textbf{Adult} \cite{kohavi1996scaling}, \textbf{Bank} \cite{moro2014data}, \textbf{COMPAS} \cite{angwin2022machine} and \textbf{Communities and Crime} \cite{redmond2009communities}. The descriptions of the datasets are presented in Table \ref{tab:dataset}.

\begin{table}[ht]
\footnotesize
\caption{Real-world fairness datasets used in experiments}
\centering
\begin{tabular}{c|c c c c}
    \hline
    Dataset & n & d & Label & Sensitive Group \\
    \hline
    Adult (a9a) & 48,842 & 123 & Income & Gender \\
    Bank & 41,188 & 54 & Subscription & Age \\
    COMPAS & 6,172 & 16 & Recidivism & Race \\
    Communities and Crime & 1,994 & 140 & Crime & Race \\
    MSLR-WEB10K & 1.2 M & 136 & Relevance & Quality Score \\
    \hline
\end{tabular}
\label{tab:dataset}
\end{table}

% \subsection{Non-convex Quadratically Constrained Quadratic Programming (QCQP)}

% \subsubsection*{Implementation details}
% Problem instances were randomly generated. The symmetric matrix $Q_0$ is constructed as $Q_0=(\widetilde{Q}_0 +\widetilde{Q}_0^\top)/2$, where the entries of $\widetilde{Q}_0$ are from $\mathcal{N}(0,1)$. The positive semidefinite $Q_j$ is generated as $Q_j = \widetilde{Q}_j + (\|\widetilde{Q}_j\| + 1) \cdot \mathbb{I}_{n \times n}$, where $\mathbb{I}_{n \times n}$ is $n \times n$ identity matrix and the entries of $\widetilde{Q}_j$ are also from $\mathcal{N}(0,1)$. 
% Moreover, $c_0$, $c_j$ and $d_j<0$ are also generated randomly. We set $\ell_i=-10$ and $u_i=10$ for all $i \in [n]$. 

% Performance is evaluated based on CPU time, averaging over 5 independent runs. We measure two key metrics: stationarity residuals ($\|\x_{k+1} - \Pi_{X} [\x_{k+1} - \nabla_{\x} \mathcal{L}(\x_{k+1},\la_{k+1})] \|$) and feasibility residuals ($\frac{1}{\rho_k}\|\la_{k+1} - \la_{k} \|$ for NL-IAPIAL and GDPA, and use $\frac{1}{\rho}\|\la_{k+1} - \m_{k+1} \|$ for PPALA). Given that NL-IAPIAL is a double-loop algorithm, we evaluate and compare the behaviors of the algorithms based on CPU time in seconds.

\subsection{Non-convex Multi-class Neyman-Pearson Classification}

\subsubsection*{Implementation details}
We employ a two-layer feed-forward neural network with sigmoid activation for the classition faction on the Fashion-MNIST \cite{xiao2017fashion} and CIFAR10 \cite{krizhevsky2009learning} datasets. We compare Algorithm \ref{alg:ppala} (\text{PPALA}) with \text{GDPA}, as NL-IAPIAL can only handle convex constraints. A sigmoid function $\phi(y)=1/{(1+\exp(y))}$ is used for the loss function as in \cite{lu2022single}. 
For the experiments, we use 4 classes and set $\theta=1$ with $\kappa_i = 1$ for Fashion-MNIST and $\kappa_i = 2$ for CIFAR-10.\footnote{Note that the parameter settings for GDPA, as in \cite[Section F in Appendix]{lu2022single}, lead to a lack of convergence in the neural network setting, particularly with a small value of the threshold $\kappa_i$ and a large increase ratio for updating the penalty parameter.} PPALA uses a fixed learning rate $10^{-3}$ and parameters $\alpha = 10, \beta = 0.2$ for both datasets. The initial point $\x_0$ is randomly generated in each experiment. These numerical experiments were conducted using an A100 GPU and were implemented with Pytorch \cite{paszke2019pytorch}.

\section{Additional Experiments} \label{sec:additional_experiments}
In this section, we further validate the claims on hyperparameter sensitivity and dual variable convergence by conducting additional experiments. We also show the empirical performance of Algorithm \ref{alg:plada} by extending the application to stochastic setting.

\begin{figure*}[t]
\hspace*{70pt}\makebox[10pt]{Loss}%
\hspace*{142pt}\makebox[10pt]{DP Violation}%
\hspace*{138pt}\makebox[10pt]{Near Stationarity}\\%
% \makebox[10pt]{\raisebox{50pt}{\rotatebox[origin=c]{90}{Loss}}}%
\subfigure{\includegraphics[width=0.311\textwidth]{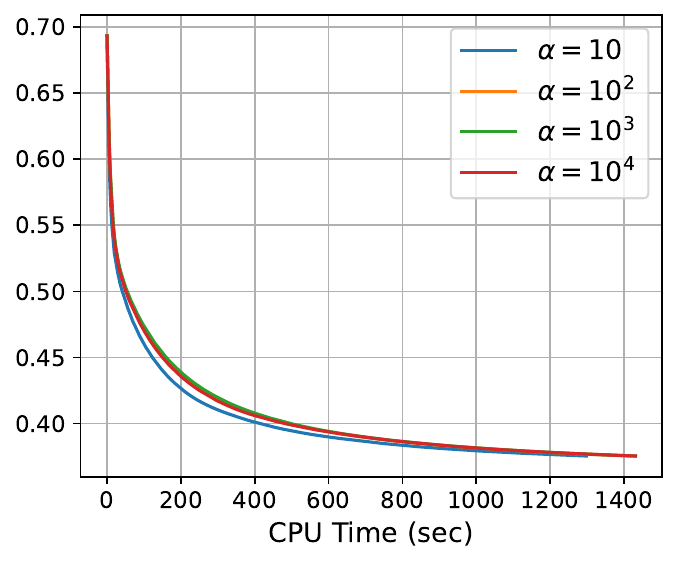} \label{fig:log_dp_a9a_alpha_obj}}
% \makebox[10pt]{\raisebox{50pt}{\rotatebox[origin=c]{90}{DP Violation}}}%
\subfigure{\includegraphics[width=0.325\textwidth]{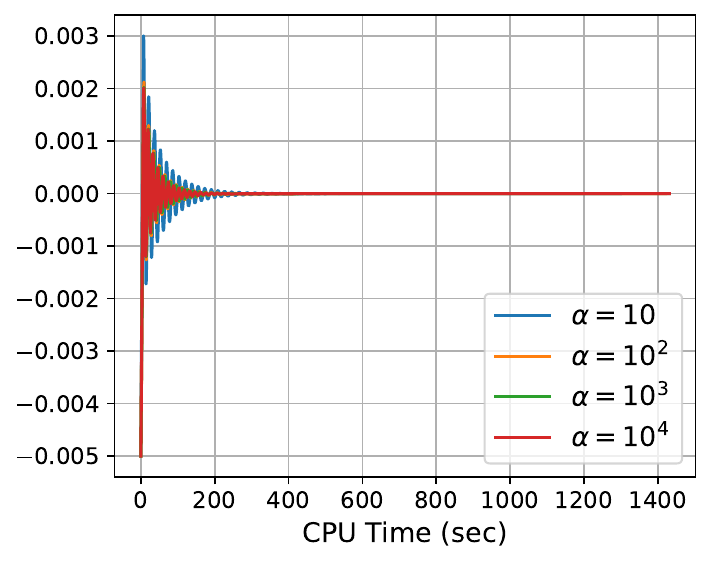} \label{fig:log_dp_a9a_alpha_cons}}
% \makebox[10pt]{\raisebox{50pt}{\rotatebox[origin=c]{90}{Near Stationarity}}}%
\subfigure{\includegraphics[width=0.32\textwidth]{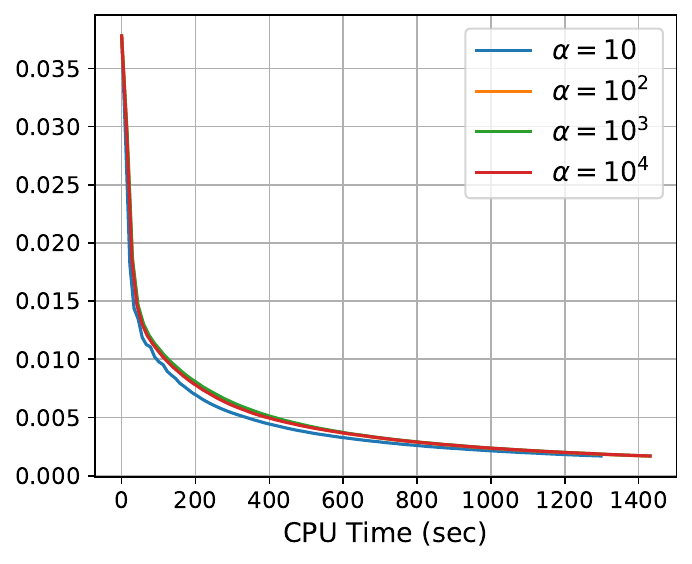} \label{fig:log_dp_a9a_alpha_stat}}
\caption{Comparison of the performance of PLADA with different $\alpha$ on the logistic loss objective with demographic parity (DP) constraint on Adult dataset. The results show the performance of PLADA is not sensitive to the value of $\alpha$ ($\beta=0.1$ is fixed).}
\label{fig:alpha_comparison}
\end{figure*}

\begin{figure*}[t]
\hspace*{68pt}\makebox[10pt]{Loss}%
\hspace*{143pt}\makebox[10pt]{DP Violation}%
\hspace*{140pt}\makebox[10pt]{Near Stationarity}\\%
% \makebox[10pt]{\raisebox{75pt}{\rotatebox[origin=c]{90}{Loss}}}%
\subfigure{\includegraphics[width=0.31\textwidth]{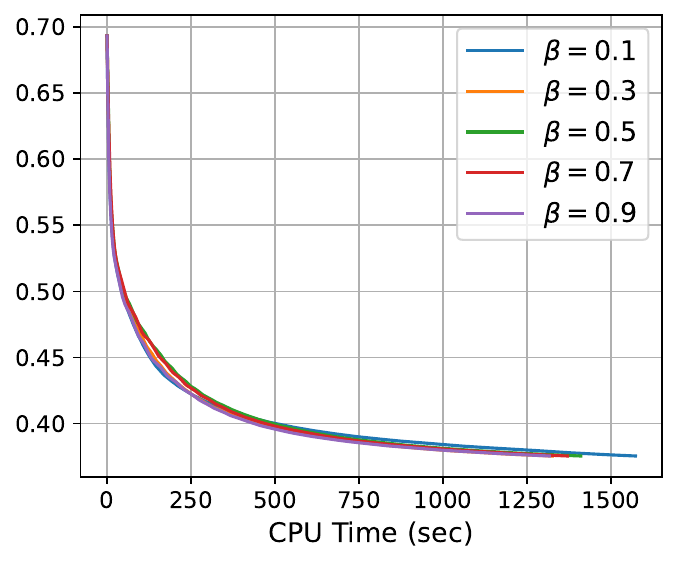} \label{fig:log_dp_a9a_beta_obj}}
% \makebox[10pt]{\raisebox{75pt}{\rotatebox[origin=c]{90}{DP Violation}}}%
\subfigure{\includegraphics[width=0.325\textwidth]{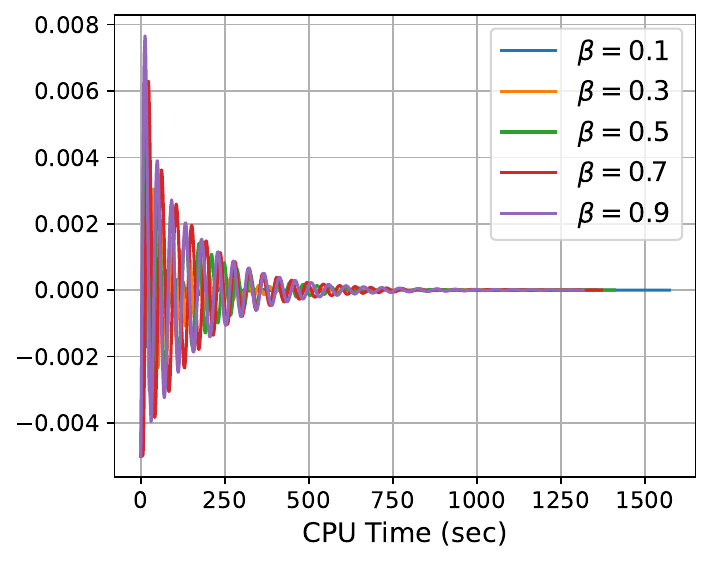} \label{fig:log_dp_a9a_beta_cons}}
% \makebox[10pt]{\raisebox{75pt}{\rotatebox[origin=c]{90}{Near Stationarity}}}%
\subfigure{\includegraphics[width=0.315\textwidth]{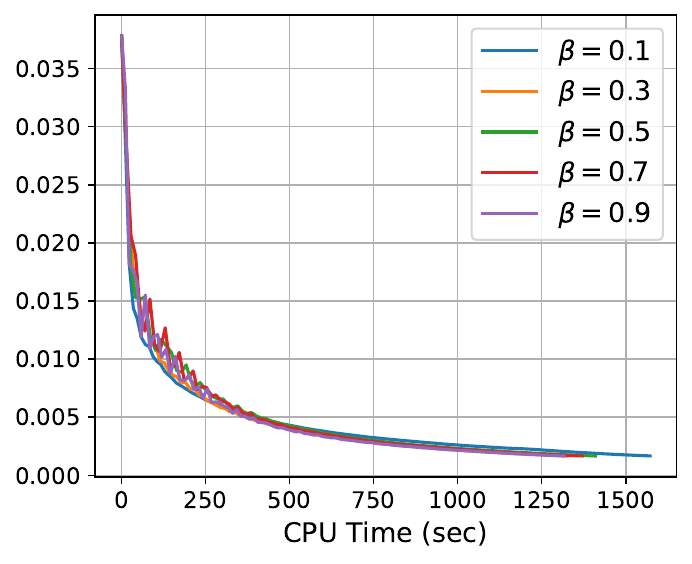} \label{fig:log_dp_a9a_beta_stat}}
% \vspace{-7mm}
\caption{Comparison of the performance of PLADA with different $\beta$ on the logistic loss objective with demographic parity (DP) constraint on Adult dataset. The results show that the performance of PLADA is very slightly sensitive to the choice of $\beta$, as it affect dual parameter defined by $\rho=\frac{\alpha}{1+\alpha\beta}$ ($\alpha=10$ is fixed).}
\label{fig:beta_comparison}
\end{figure*}

\subsubsection*{Hyperparameter robustness}
Although PLADA requires the selection of multiple hyperparameters ($\alpha, \beta, \rho, \eta, \tau$), we show that it is straightforward to select appropriate values for each hyperparameter. Figures \ref{fig:alpha_comparison} and \ref{fig:beta_comparison} show the performance of PLADA across a wide range of values for the key parameters $\alpha>0$ and $\beta>0$, respectively. The results demonstrate that the algorithm's convergence behavior is remarkably stable and can still provide solutions that minimize the objective while remaining feasible.

\begin{figure*}[t]
\hspace*{73pt}\makebox[10pt]{Adult}%
\hspace*{140pt}\makebox[10pt]{Bank}%
\hspace*{135pt}\makebox[10pt]{COMPAS}\\%
% \makebox[10pt]{\raisebox{75pt}{\rotatebox[origin=c]{90}{Adult}}}%
\subfigure{\includegraphics[width=0.323\textwidth]{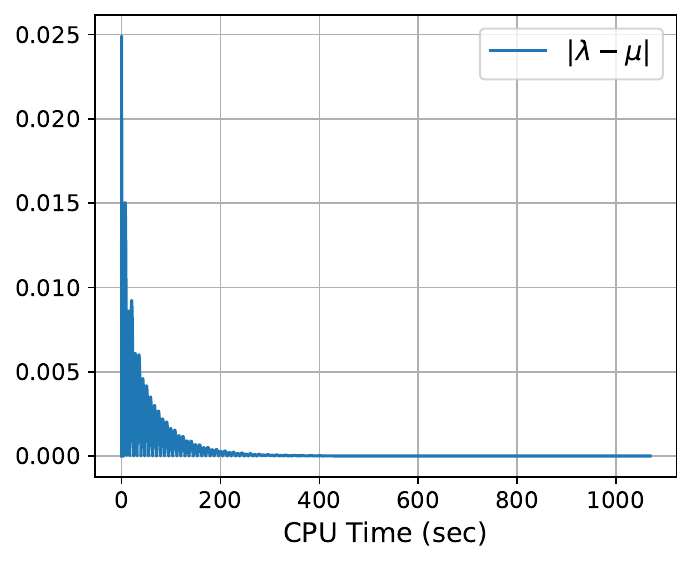} \label{fig:log_dp_a9a_dual}}
% \makebox[10pt]{\raisebox{80pt}{\rotatebox[origin=c]{90}{Bank}}}%
\subfigure{\includegraphics[width=0.313\textwidth]{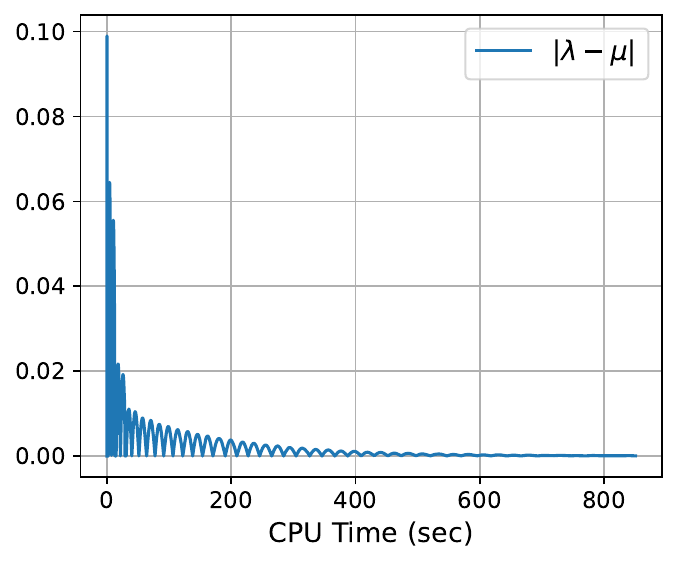} \label{fig:log_dp_bank_dual}}
% \makebox[10pt]{\raisebox{75pt}{\rotatebox[origin=c]{90}{COMPAS}}}%
\subfigure{\includegraphics[width=0.32\textwidth]{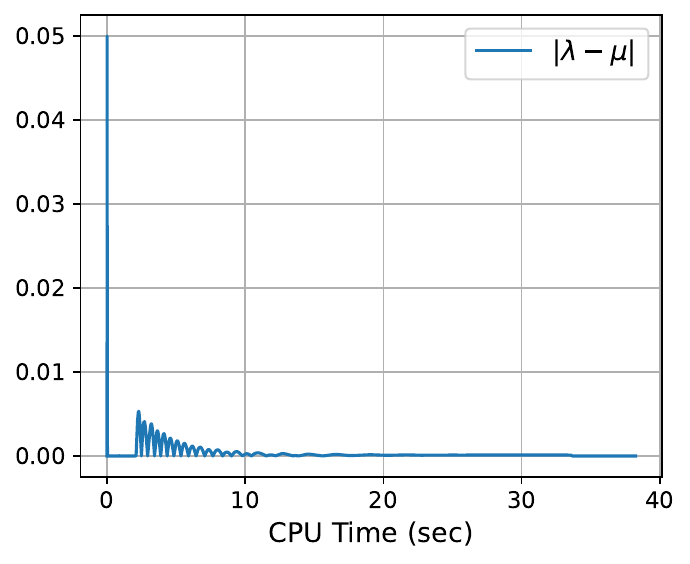} \label{fig:log_dp_compas_dual}}
\subfigure{\includegraphics[width=0.315\textwidth]{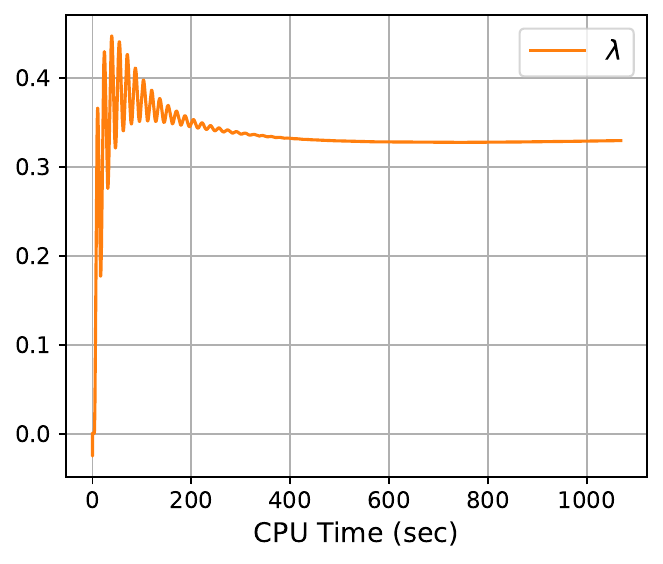} \label{fig:log_dp_a9a_lambda}}
\subfigure{\includegraphics[width=0.315\textwidth]{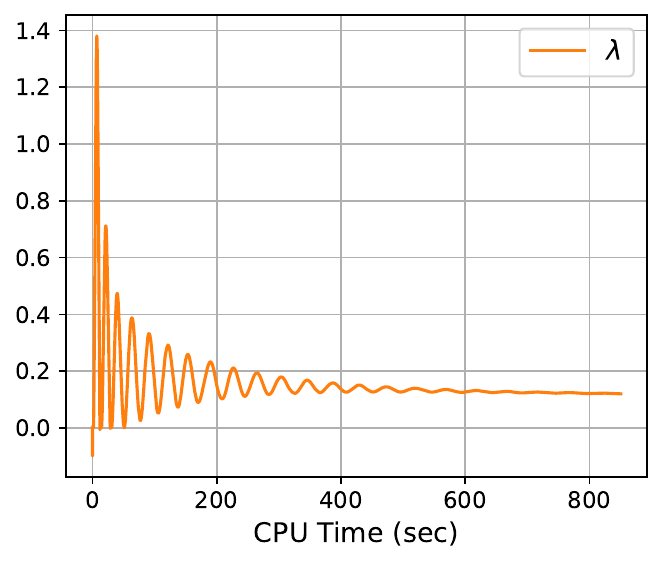} \label{fig:log_dp_bank_lambda}}
\subfigure{\includegraphics[width=0.335\textwidth]{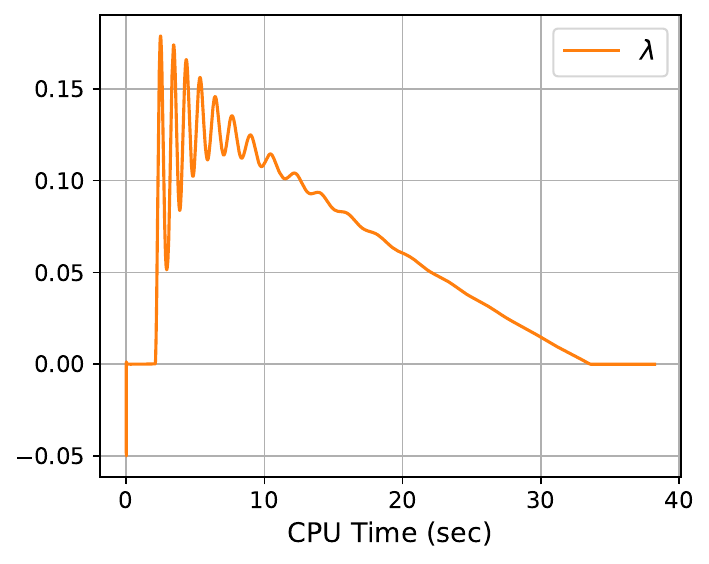} \label{fig:log_dp_compas_lambda}}
\subfigure{\includegraphics[width=0.315\textwidth]{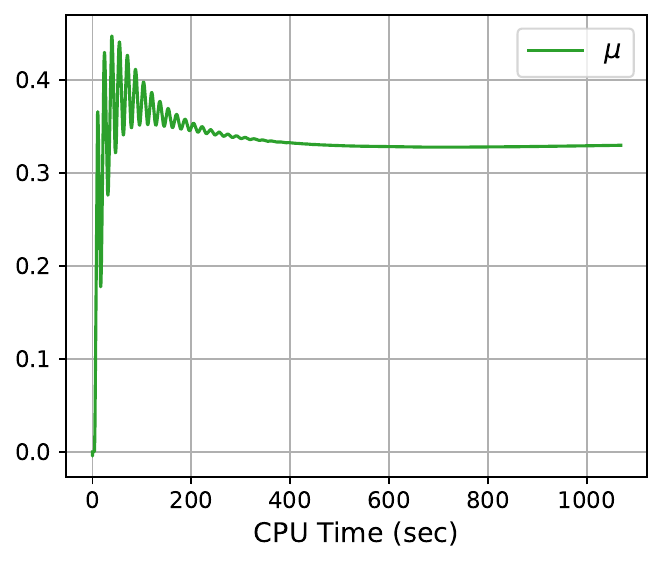} \label{fig:log_dp_a9a_mu}}
\subfigure{\includegraphics[width=0.315\textwidth]{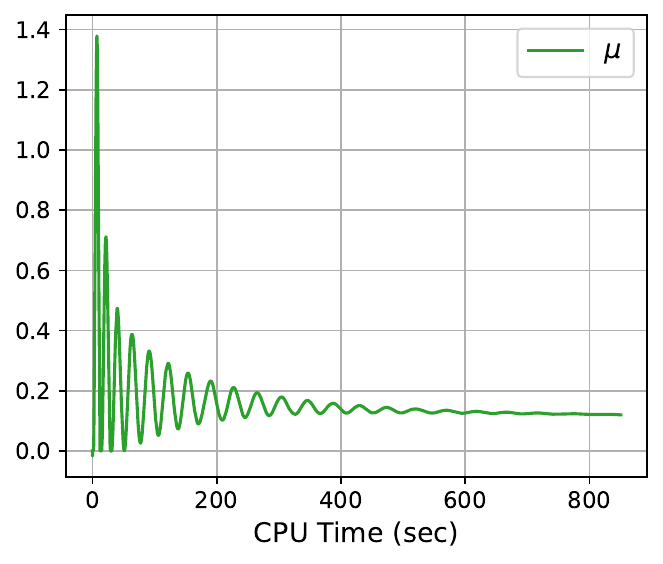} \label{fig:log_dp_bank_mu}}
\subfigure{\includegraphics[width=0.335\textwidth]{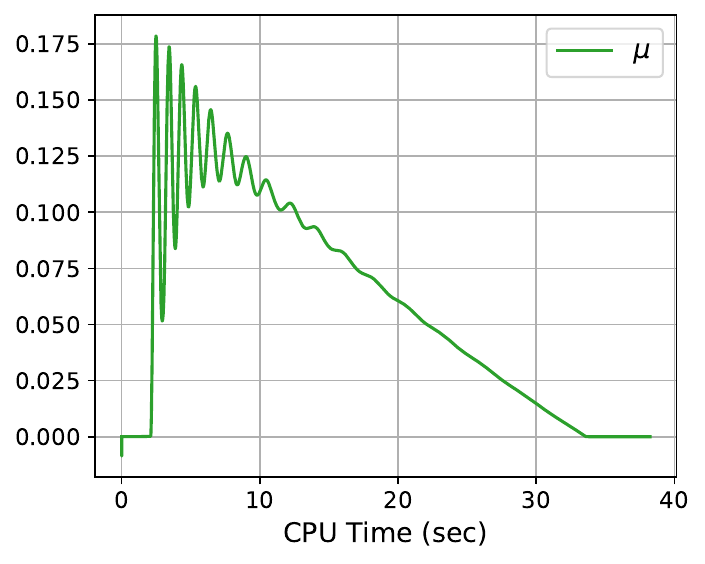} \label{fig:log_dp_compas_mu}}
\caption{The values of $|\lambda-\mu|, \lambda, \mu$ of PLADA on the logistic loss objective with demographic parity (DP) constraint. The results show the converging behavior of the dual variables and their difference.}
\label{fig:dual}
\end{figure*}

\subsubsection*{Dual variables convergence}
Lemma \ref{lem_feasibility_ppala} claims that the gap between the dual variables $\|\la-\m\|$ should converge to zero, ensuring feasibility. Figure \ref{fig:dual} provides empirical validation of this result. The plots clearly show the convergence of $|\la - \m|$ to zero, as well as the individual convergence of $\la$ and $\m$.

\begin{figure*}[t]
\centering{
\subfigure{\includegraphics[width=0.38\textwidth]{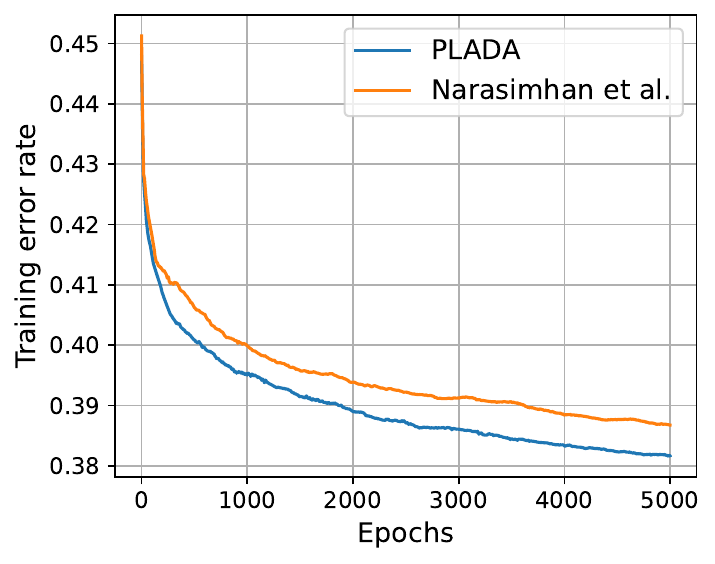} \label{fig:RF_obj}}
\subfigure{\includegraphics[width=0.39\textwidth]{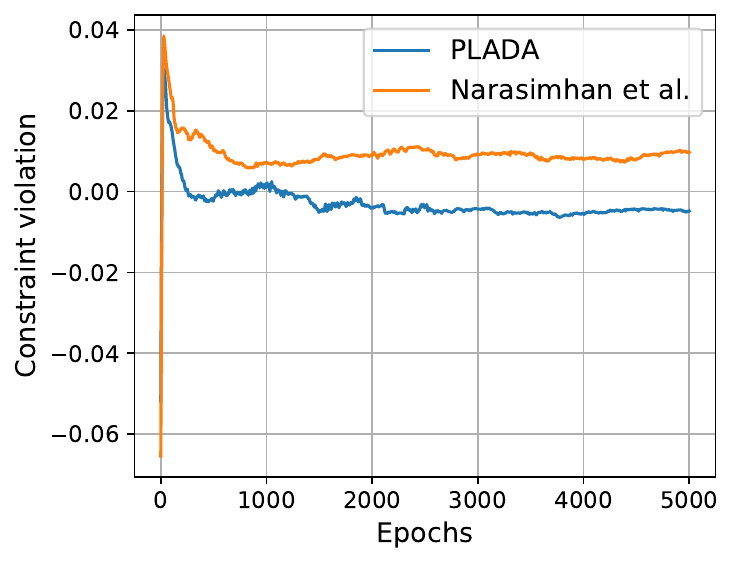} \label{fig:RF_cons}}}
\vspace{-5mm}
\caption{The average performance of PLADA and \cite{narasimhan2020approximate} on the ranking fairness versus Epochs after three repetitions. MSLR-WEB10K dataset has over 1.2M data points, from which over 470k pairs are created. PLADA achieves better constraint satisfaction with comparable error rate against approximate methods for the stochastic setting.}
\label{fig:stochastic_setting}
\end{figure*}

\subsubsection*{Highly stochastic setting} To evaluate PLADA in a more challenging setting, we applied it to a ranking fairness problem using the large-scale MSLR-WEB10K dataset, which involves over 470,000 pairwise constraints. In this highly stochastic environment, where mini-batching is inevitable, PLADA was benchmarked against the method of \cite{narasimhan2020approximate}. As shown in Figure \ref{fig:stochastic_setting}, our algorithm achieves lower error rate and better constraint satisfaction, demonstrating its effectiveness even under highly stochastic conditions.

\end{document}